\documentclass[preprint,12pt]{elsarticle}

\usepackage{amsmath,amssymb,amsfonts,amsthm,mathrsfs}
\usepackage{mathtools}

\usepackage{longtable}
\usepackage{booktabs}

\usepackage{anyfontsize}
\usepackage{graphicx}
\usepackage{subcaption}
\usepackage{float}
\usepackage{afterpage}

\usepackage{seqsplit}
\usepackage{xcolor}
\usepackage{enumitem}
\usepackage{geometry}
\usepackage[hidelinks]{hyperref}
\usepackage{bookmark}


\usepackage[capitalize]{cleveref}

\newtheorem{theorem}{Theorem}[section]
\newtheorem{lemma}[theorem]{Lemma}
\newtheorem{corollary}[theorem]{Corollary}
\newtheorem{definition}[theorem]{Definition}

\theoremstyle{definition}

\newtheorem{remark}{Remark}[section]

\theoremstyle{plain}
\newtheorem{assumption}[theorem]{Assumption}

\journal{Journal of Differential Equations}

\begin{document}

\begin{frontmatter}

\title{Persistence and long-time breakdown of most probable paths under
time-dependent fractional noise with applications to KAM tori}

\author[inst1]{Yanbin Zhu}
\ead{zhuyb26@mails.jlu.edu.cn}

\author[inst1]{Xiaomeng Jiang}
\ead{jxmlucy@hotmail.com}

\author[inst1]{Yong Li\corref{cor1}}
\ead{liyong@jlu.edu.cn}

\cortext[cor1]{Corresponding author.}

\affiliation[inst1]{
    organization={College of Mathematics, Jilin University},
    city={Changchun},
    postcode={130012},
    country={China}
}

\begin{abstract}
We investigate the persistence of most probable paths through the
Onsager--Machlup functional for multidimensional stochastic differential
equations driven by fractional Brownian motion with time-dependent diffusion
coefficients and Hurst parameter $H\in(1/4,1)$. Under suitable structural and
variational conditions, deterministic trajectories remain most probable paths
for sufficiently small noise in both the fixed-endpoint transition problem and
the free-endpoint evolution problem, whereas sufficiently large noise destroys
their local minimality. More generally, when exact persistence does not hold,
global most probable paths converge to the corresponding trajectories of the
noise-free system in both the uniform and H\"older topologies at the rate
$O(\epsilon)$. We further analyze the second variation along periodic
deterministic trajectories over time intervals of length $NT$. For $H>1/2$,
positive definiteness, and hence local minimality, is lost on sufficiently long
intervals. For $H\in(1/4,1/2]$, long-time positive definiteness holds for the
fixed-endpoint problem, but this conclusion does not directly extend to the
free-endpoint setting. We also establish the persistence of KAM tori in nearly
integrable Hamiltonian systems in the sense of most probable evolution paths.
Finally, a two-dimensional numerical example illustrates the persistence of
deterministic trajectories under small noise and their pronounced deviation
under large noise.
\end{abstract}
\begin{keyword}
Onsager--Machlup functional
\sep fractional Brownian motion
\sep most probable path
\sep periodic orbit
\sep long-time persistence
\sep KAM torus

\MSC[2020] 60H10
\sep 60G22
\sep 49J40
\sep 34C25
\sep 37J40
\end{keyword}

\end{frontmatter}

\allowdisplaybreaks

\section{Introduction}
Compared with ordinary differential equations, stochastic differential
equations incorporate random fluctuations into the evolution of dynamical
systems and may therefore generate a wide range of possible trajectories.
This naturally raises a fundamental question: among all admissible
trajectories, which paths are the most likely to occur? The
Onsager--Machlup functional provides a variational framework for addressing
this question by quantifying the relative concentration of the stochastic
system around prescribed reference paths. The main objective of this paper is
to determine when a deterministic trajectory of the underlying ordinary
differential equation retains its role as a most probable path after
fractional stochastic perturbations are introduced.

More precisely, we consider the following $n$-dimensional stochastic
differential equation driven by time-dependent fractional noise:
\begin{equation}
\label{fir}
X_t
=
x_0
+
\int_0^t b_s(X_s)\,ds
+
\int_0^t \sigma_s\,dB_s^H,
\end{equation}
where
\begin{equation*}
b:\mathbb R^+\times\mathbb R^n\longrightarrow\mathbb R^n
\end{equation*}
is the drift coefficient and
\begin{equation*}
\sigma_s
=
\operatorname{diag}
\bigl(
\sigma_s^1,\ldots,\sigma_s^n
\bigr)
\end{equation*}
is a time-dependent diffusion matrix. We assume that each
$\sigma^i:\mathbb R^+\to\mathbb R$ satisfies the uniform nondegeneracy
condition
\begin{equation*}
0<m_i\leq \sigma_s^i\leq M_i,
\qquad
s\geq0.
\end{equation*}
Here, $B^H$ denotes an $n$-dimensional fractional Brownian motion with Hurst
parameter $H\in(1/4,1)$. Since $\sigma$ is deterministic and sufficiently
regular in time, the stochastic convolution in \eqref{fir} is well defined as
a Wiener integral and coincides with the corresponding Young integral.

For an admissible reference path $\phi$, the corresponding
Onsager--Machlup functional is defined through the relative small-tube
asymptotics
\begin{equation}
\exp\bigl(-J(\phi)\bigr)
=
\lim_{\varepsilon\downarrow0}
\frac{
\mathbb P
\left(
|X_\cdot-\phi_\cdot|
\leq
\varepsilon
\right)
}{
\mathbb P
\left(
\left|
\int_0^\cdot\sigma_s\,dB_s^H
\right|
\leq
\varepsilon
\right)
},
\end{equation}
provided that the limit exists. Thus, a smaller value of $J(\phi)$ corresponds
to a larger relative probability that the solution path remains in a thin
tube around $\phi$. Most probable paths can therefore be characterized as
minimizers of the Onsager--Machlup functional over an appropriate admissible
path space.

    To investigate the dependence of most probable paths on the noise
intensity, after deriving the Onsager--Machlup functional for
\eqref{fir}, we introduce the small-noise family obtained by replacing
$\sigma$ with $\sqrt{\epsilon}\sigma$. More precisely, we consider
\begin{equation*}
    X_t^\epsilon
    =
    x_0
    +
    \int_0^t b_s(X_s^\epsilon)\,ds
    +
    \sqrt{\epsilon}
    \int_0^t\sigma_s\,dB_s^H,
    \qquad
    \epsilon>0.
\end{equation*}
Here, $\epsilon$ denotes the noise intensity. 

We consider two types of most probable paths. The first is the most probable
transition path, for which both the initial and terminal states are prescribed.
More precisely, given $x_0,x_T\in\mathbb R^n$, we seek minimizers of the
Onsager--Machlup functional among paths satisfying
\begin{equation*}
\phi_0=x_0,
\qquad
\phi_T=x_T.
\end{equation*}
Such paths are particularly relevant to noise-induced transitions between
metastable states. For example, in classical chemical reaction dynamics, the
reactant and product states are commonly represented by two local minima of an
underlying free-energy landscape, and a most probable transition path
describes the most likely reaction route connecting them; see
\cite{pinski2010transition}.

The second object is the most probable evolution path. In this case, only the
initial point is prescribed, while the terminal point is left free. We
therefore seek the most likely evolution of the stochastic system starting
from a given state $x_0$, without imposing a terminal constraint. In a related
setting, \cite{CHENG2019121779} employed most probable transition pathways and
maximal likely trajectories to study concentration transitions and
transcription dynamics in a stochastic genetic regulation model driven by
Gaussian and stable L\'evy noises. The free-endpoint problem enlarges the
admissible path and perturbation spaces, and its relation to the
fixed-endpoint problem will be examined in Section~6.

The Onsager--Machlup functional was originally introduced by Onsager and
Machlup in \cite{Onsager1953}. Subsequently, D\"urr and Bach
\cite{durr1978onsager} established a precise connection among the
Onsager--Machlup function, most probable tubes, and the associated variational
principle. A rigorous mathematical treatment of diffusion processes under the
uniform norm was later developed in \cite{ikeda1981stochastic}, and the theory
was subsequently extended to more general norms on path space in
\cite{Shepp1992,Capitaine1995}.

In recent years, Onsager--Machlup theory has been extended to a variety of
nonclassical stochastic systems. Huang et al.~\cite{duanjq} derived the
Onsager--Machlup functional for jump-diffusion processes using a
probability-flow approach, in which the probability density of the original
jump-diffusion process is related to that of an auxiliary diffusion process.
Liu and Gao studied degenerate stochastic systems driven by fractional
Brownian motion by employing a Girsanov transformation adapted to the
degenerate structure \cite{liu2024onsager,Liu2026}. In a different direction,
\cite{zhang2025persistenceinvarianttoristochastic} combined
Onsager--Machlup theory with KAM techniques to investigate invariant tori in
stochastic nonlinear Schr\"odinger equations. A connection between the
Onsager--Machlup functional and Loewner energy was established in
\cite{Carfagnini2024}. From the perspective of variational convergence,
Pinski et al.~\cite{pinski2012gamma} investigated the $\Gamma$-limit of the
Onsager--Machlup functional in the small-noise regime through a suitable time
rescaling. Building on this work, Li and Li
\cite{doi:10.1137/20M1310539} proved that the $\Gamma$-limit of the
Onsager--Machlup functional on curve space coincides with the geometric
Freidlin--Wentzell functional.

Despite these developments, most existing studies focus primarily on
the derivation of Onsager--Machlup functionals or the computation of
individual most probable paths. Comparatively little is known about the
variational stability of these paths under changes in the noise
intensity, the observation horizon, and the endpoint constraints. This
leads to two closely related questions.

The first question is under what conditions a deterministic trajectory
of the underlying noise-free system remains a local or global minimizer
of the Onsager--Machlup functional after stochastic perturbations are
introduced. We refer to this property as the persistence of most
probable paths. The second question concerns the situation in which
such exact persistence fails: whether global most probable paths still
converge to the corresponding deterministic trajectory in the
small-noise limit and, if so, at what rate and in which path-space
topologies. To the best of our knowledge, such quantitative convergence
results have not been established for multidimensional systems driven
by fractional Brownian motion with time-dependent diffusion
coefficients.

In addition to these fixed-time questions, the long-time behavior of
hyperbolic limit cycles under random perturbations has received
considerable attention. At the pathwise level, Dieci, Li, and Zhou
\cite{Dieci2016RandomPerturbations} showed that, for a suitable class
of random perturbations, stochastic oscillators remain confined near
the underlying deterministic periodic orbit. For small Brownian
perturbations of stable hyperbolic limit cycles, Giacomin, Poquet, and
Shapira \cite{Giacomin2018PhaseDiffusion} established exponentially
long proximity estimates and showed that the phase difference evolves,
on the $\epsilon^{-2}$ time scale, as a Brownian motion with constant
drift. From a statistical viewpoint, Du, Hening, Nguyen, and Yin
\cite{Du2021SwitchingDiffusions} proved the concentration of invariant
measures near stable limit cycles in a rapidly switching diffusion
setting.

These results describe the long-time behavior of periodic structures
through stochastic sample paths or invariant measures. In contrast, we
study the effect of the observation horizon from the variational
viewpoint of most probable-path persistence. More precisely, we ask
whether a deterministic periodic trajectory remains a local minimizer
of the Onsager--Machlup functional when it is periodically extended
over an increasing number of periods.

Let $\bar X$ be a nonconstant $T$-periodic solution of the underlying
noise-free system. Under the fixed-time hypotheses, for every fixed
$N$, there exists a noise threshold $\epsilon_N^\ast>0$ such that the
$N$-fold periodic concatenation of $\bar X$ is a local minimizer of the
corresponding Onsager--Machlup functional whenever
\begin{equation*}
    0<\epsilon<\epsilon_N^\ast.
\end{equation*}
We fix the noise intensity and investigate what happens as the number
of periods $N$ increases. In particular, we ask whether the local
minimality of the periodically extended trajectory can be maintained
uniformly in $N$, or whether a sufficiently long observation horizon
eventually creates a negative direction in the second variation and
destroys most probable-path persistence.
This question is particularly relevant for fractional Brownian motion,
whose temporal correlations are encoded by a nonlocal fractional operator. We therefore further ask whether the memory
carried by the fractional noise can accumulate across successive
periods and thereby alter or destroy the long-time variational
persistence of the deterministic periodic trajectory. As shown below,
the answer depends essentially on the Hurst parameter $H$ through the
long-time scaling of the associated fractional operator.

The main contributions of this paper are summarized as follows.

First, motivated by the genuinely multidimensional structure arising in the
linearized analysis of periodic orbits, we extend the Onsager--Machlup
functional for fractional Brownian motion established in \cite{Nualart} to
multidimensional stochastic differential equations with time-dependent
diffusion coefficients; see
Theorem~\ref{thm:OM-n-dimensional}. The resulting functional is expressed in
terms of a weighted fractional Cameron--Martin operator and is valid for the
full range $H\in(1/4,1)$. Its kinetic part agrees with the corresponding
Freidlin--Wentzell action obtained in \cite{FanYuYuan2023}, while the
one-dimensional correction involving the spatial derivative of the drift is
replaced by the divergence of the drift field, consistently with the
multidimensional Brownian case.

Second, we study the finite-time persistence of deterministic trajectories
as both most probable transition paths with fixed endpoints and most probable
evolution paths with a free terminal point. Let $\bar X$ be a trajectory of
the underlying noise-free system. We impose the criticality condition
\begin{equation*}
    \nabla(\nabla\cdot b_s)(\bar X_s)=0,
\end{equation*}
which means that the local phase-space volume expansion rate
$\nabla\cdot b_s$ has vanishing first spatial variation along $\bar X$.
Under this condition, $\bar X$ is a critical point of the
Onsager--Machlup functional. We establish a Poincar\'e-type
Cameron--Martin embedding estimate and use it to prove the coercivity of
the second variation for sufficiently small noise. This yields the local
persistence result in
Theorem~\ref{thm:local-minimizer-small-noise}. For a sufficiently smaller noise intensity,
Theorem~\ref{thm:global-minimality-small-noise} further shows that
$\bar X$ is the unique global minimizer, and hence that most
probable-path persistence holds globally.

When the criticality condition fails, the deterministic trajectory is
generally no longer an exact minimizer for any fixed $\epsilon>0$.
Nevertheless,
Theorem~\ref{thm:uniform-convergence-global-minimizers},
Corollary~\ref{cor:Holder-convergence-global-minimizers} show that global most probable paths
converge to $\bar X$ in both the uniform and H\"older topologies at the rate
$O(\epsilon)$ as $\epsilon\to0$. Thus, when exact persistence fails, it is
replaced by quantitative small-noise convergence.

Conversely, Theorem~\ref{thm:destruction-large-noise} shows that if the
Hessian of the divergence has a sufficiently negative direction along
$\bar X$, then sufficiently large noise destroys its local minimality in both
endpoint settings. If the divergence is spatially constant, the divergence
correction is independent of the reference path, and every deterministic
trajectory remains a global minimizer for arbitrary noise intensity.

Third, we investigate the long-time behavior of the second variation along a
$T$-periodic deterministic trajectory $\bar X$. For a fixed noise intensity,
we examine the second variation over $[0,NT]$ as $N\to\infty$. The tangential
direction of the periodic orbit generates a neutral mode of the linearized
operator, whose contribution depends crucially on the long-time scaling of
the fractional Cameron--Martin operator.

For $H>1/2$, Theorem~\ref{thm:H-greater-half-collapse} shows that, under an
appropriate effective-potential condition, the destabilizing potential
contribution dominates the kinetic contribution along a suitable family of
slowly varying perturbations. Consequently, the second variation loses
positive definiteness on sufficiently long intervals, and the periodically
extended deterministic trajectory ceases to be a local minimizer.

By contrast, for $1/4<H\leq1/2$,
Theorems~\ref{thm:persistence-H-half} and
\ref{thm:persistence-H-lesshalf} show that, under the corresponding
small-noise conditions, the second variation remains positive definite over
arbitrarily many periods. The resulting estimates are anisotropic: the
tangential component is controlled by the appropriate Cameron--Martin or
fractional kinetic norm, whereas the transversally hyperbolic component is
controlled in $L^2$. These estimates establish long-time anisotropic positive
definiteness, but do not by themselves imply local minimality with respect to
a single path-space topology.

Analytically, the distinction between the two Hurst regimes arises from the
dependence of the long-time scaling of the fractional operator on its order,
which changes across $H=1/2$. Probabilistically, this reflects the
accumulation of perturbations induced by the positive long-range dependence
of fractional Brownian motion when $H>1/2$.

    Fourth, we apply the most probable evolution-path persistence theory to
nearly integrable Hamiltonian systems with time-dependent fractional
stochastic perturbations. Combining the deterministic KAM theorem with our
small-noise persistence result, we show that the resulting deformed invariant
tori persist as families of most probable evolution paths; see
Theorem~\ref{thm:fractional-most-probable-KAM-tori}. This extends the
Brownian-noise result of \cite{xinze2026most} to the fractional setting.

To complement the theoretical analysis, we present two numerical
examples. The first considers a two-dimensional system with an explicit
periodic orbit and verifies the required structural conditions.
Numerical simulations in the Brownian case show that the most probable
transition path remains close to the deterministic periodic orbit under
small noise, whereas a pronounced deviation occurs under large noise.
We further compute the second variation over multiple periods for
different Hurst parameters. The results illustrate the
Hurst-dependent long-time behavior: for $H>1/2$, the second variation
eventually loses positive definiteness, whereas for $H\leq1/2$ it
remains positive over the computed range of periods. The second example
provides a three-dimensional visualization of a deformed KAM torus,
together with an unperturbed quasi-periodic trajectory, a most probable
evolution path on the deformed torus, and sample trajectories under
fractional stochastic perturbations.

Our analysis combines several techniques. In deriving the multidimensional
fractional Onsager--Machlup functional, we use the Gaussian correlation
inequality to establish the conditional expectation limits of the coupling
terms generated by the multidimensional noise. The resulting functional is
naturally defined on a weighted fractional Cameron--Martin space.

To study most probable-path persistence, we compute the first and second
variations of the functional on the corresponding Cameron--Martin spaces.
The vanishing of the first variation gives the criticality condition, while
the coercivity or indefiniteness of the second variation yields persistence
under small noise or its destruction under large noise. To pass from local
to global minimality, we use kinetic-energy sublevel sets
\begin{equation*}
    \left\{
        \phi:I_H(\phi)\leq M
    \right\}.
\end{equation*}
Local minimality controls paths inside a sufficiently small sublevel set,
whereas paths outside it are excluded by the dominant kinetic contribution
for sufficiently small noise. Quantitative convergence to the noise-free
trajectory is obtained from global minimality, the Cameron--Martin embedding,
and Gr\"onwall's inequality.

For periodic trajectories, Floquet theory separates the neutral tangential
mode from the transversally hyperbolic directions. When $H>1/2$, the Fredholm
alternative provides suitable periodic response modes, and the long-time
scaling of the fractional derivative produces destabilizing perturbations.
For $1/4<H\leq1/2$, the Floquet decomposition leads to anisotropic estimates
for the tangential and transverse components. In the fractional regime
$1/4<H<1/2$, we additionally use uniform equivalence between weighted and
unweighted fractional integral norms and uniform multiplier estimates for
periodic matrix coefficients on multiple-period intervals.

The remainder of this paper is organized as follows. Section~2 collects the
necessary preliminaries on fractional Brownian motion, Cameron--Martin spaces,
fractional operators, and the analytic tools used below. In Section~3, we
derive the Onsager--Machlup functional for multidimensional stochastic
differential equations with time-dependent fractional noise; see
Theorem~\ref{thm:OM-n-dimensional}. Section~4 studies most probable paths on
fixed time intervals in both the fixed-endpoint transition problem and the
free-endpoint evolution problem. We establish small-noise persistence,
large-noise loss of local minimality, and quantitative convergence of global
most probable paths to the corresponding deterministic trajectories.
Section~5 is devoted to the long-time behavior of the second variation along
periodic deterministic trajectories. We establish the loss of positive
definiteness for $H>1/2$ and uniform anisotropic positive-definiteness results
for $H\in(1/4,1/2]$; see
Theorems~\ref{thm:H-greater-half-collapse},
\ref{thm:persistence-H-half}, and
\ref{thm:persistence-H-lesshalf}. In Section~6, we establish the persistence
of KAM invariant tori in the sense of most probable evolution paths; see
Theorem~\ref{thm:fractional-most-probable-KAM-tori}. Finally, Section~7 presents two numerical examples illustrating the
theoretical results: a two-dimensional periodic-orbit example and a
three-dimensional visualization of most probable evolution paths
associated with a deformed KAM torus.

\section{Preliminaries}  \label{perliminary}
In this section, we recall some basic notations, assumptions and lemmas that will be used in the sequel.

Throughout this paper, for $1\leq p\leq \infty$, we denote by
\[
    L^p([a,b];\mathbb R^n)
\]
the usual Lebesgue space of $\mathbb R^n$-valued measurable functions on
$[a,b]$, equipped with the norm $\|\cdot\|_{L^p}$. We write
\[
    C([a,b];\mathbb R^n)
\]
for the space of continuous functions from $[a,b]$ to $\mathbb R^n$.

For $\alpha\in(0,1)$, we denote by
\[
    C^\alpha([a,b];\mathbb R^n)
\]
the Hölder space consisting of all functions $f\in C([a,b];\mathbb R^n)$ such
that
\[
    [f]_\alpha
    :=
    \sup_{\substack{s,t\in[a,b]\\ s\neq t}}
    \frac{|f_t-f_s|}{|t-s|^\alpha}
    <\infty .
\]
It is equipped with the norm
\[
    \|f\|_{C^\alpha}
    :=
    \|f\|_\infty+[f]_\alpha .
\]
When the initial value vanishes, we write
\[
    C_0^\alpha([a,b];\mathbb R^n)
    :=
    \left\{
    f\in C^\alpha([a,b];\mathbb R^n): f_a=0
    \right\}.
\]
On this space, the Hölder seminorm $[\cdot]_\alpha$ is equivalent to the full
Hölder norm. Hence we endow $C_0^\alpha([a,b];\mathbb R^n)$ with the norm
\[
    \|f\|_\alpha := [f]_\alpha .
\]

\subsection{Fractional calculus}

We first introduce the basic concepts of fractional calculus; for further details, refer to \cite{Samko1993}.
 \begin{definition}
   	Let $f\in L^1([a,b];\mathbb{R}^n)$. The integrals 
   	\begin{align*}
   		(I_{a^+}^\alpha f)(x)&:=\frac{1}{\Gamma(\alpha)}\int_a^x (x-y)^{\alpha-1}f(y)dy,\quad x\geq a,\\
   		(I_{b^-}^\alpha f)(x)&:=\frac{1}{\Gamma(\alpha)}\int_x^b (y-x)^{\alpha-1}f(y)dy,\quad x\leq b,
   	\end{align*}
   	where $\alpha>0$, are respectively called the right and left Riemann--Liouville fractional integrals of order $\alpha$.

   \end{definition}

   	\begin{definition}
   		 Let $f\in I_{a^+}^\alpha(L^p) $, $g\in I_{b^-}^\alpha(L^p) $.
   		 Each of the expressions 
   		 \begin{align*}
   		(D_{a^+}^\alpha f)(x)&:=\left(\frac{d}{dx} \right)^{[\alpha]+1}I_{a^+}^{1+[\alpha]-\alpha}f(x),\\
   		(D_{b^-}^\alpha g)(x)&:=\left(-\frac{d}{dx} \right)^{[\alpha]+1}I_{b^-}^{1+[\alpha]-\alpha}g(x)
   	   	\end{align*}
   		 are respectively called the left and right fractional derivative.
   \end{definition}

	If $f\in I_{a^+}^\alpha(L^p) $, then the function $ \phi $ such that $f=I_{a^+}^\alpha(\phi)$ is unique in $L^p $. Fractional derivatives can be regarded as the inverse operation of fractional integrals.
 
 When $\alpha p > 1$, any function in $I_{a^+}^\alpha(L^p)$ is 
$(\alpha - \tfrac{1}{p})$-H\"older continuous. Moreover, every H\"older continuous 
function of order $\beta > \alpha$ admits a fractional derivative of order 
$\alpha$; see \cite[Proposition~2.1]{Decreusefond1999}.

  	\subsection{The \texorpdfstring{$\sigma$}{sigma}-weighted fractional
Cameron--Martin space}

Let
\begin{equation*}
    \Omega:=C_0([0,T];\mathbb R^n),
    \qquad
    \mathcal F:=\mathcal B(\Omega),
\end{equation*}
and let $B_t(\omega):=\omega(t)$ be the canonical process. For
$H\in(0,1)$, denote by $\mathbb P_H$ the centered Gaussian measure under
which $B$, denoted by $B^H$, is an $n$-dimensional fractional Brownian
motion with independent components and covariance
\begin{equation*}
    \mathbb E_{\mathbb P_H}
    \left[
        B_t^{H,i}B_s^{H,j}
    \right]
    =
    \delta_{ij}R_H(t,s),
    \qquad
    R_H(t,s)
    =
    \frac12
    \left(
        t^{2H}+s^{2H}-|t-s|^{2H}
    \right).
\end{equation*}
For every $\varepsilon\in(0,H)$,
\begin{equation*}
    B^H\in
    C_0^{H-\varepsilon}([0,T];\mathbb R^n),
    \qquad
    \mathbb P_H\text{-almost surely}.
\end{equation*}
Throughout the paper, we set
\begin{equation*}
    \alpha:=\left|H-\frac12\right|.
\end{equation*}

For the fractional Girsanov transformation, we use the Volterra
representation
\begin{equation*}
    B_t^H
    =
    \int_0^t K_H(t,s)\,dW_s,
\end{equation*}
where $W$ is an $n$-dimensional Brownian motion on an underlying Wiener
space and $K_H$ is the standard fractional Brownian kernel. The associated
deterministic operator is
\begin{equation}
\label{kh_operator}
    (K_Hu)(t)
    :=
    \int_0^t K_H(t,s)u(s)\,ds,
    \qquad
    u\in L^2([0,T];\mathbb R^n).
\end{equation}

The Cameron--Martin space of $B^H$ is
\begin{equation*}
    \mathcal H_H([0,T];\mathbb R^n)
    :=
    K_H\bigl(L^2([0,T];\mathbb R^n)\bigr),
\end{equation*}
equipped with
\begin{equation*}
    \langle K_Hu,K_Hv\rangle_{\mathcal H_H}
    :=
    \langle u,v\rangle_{L^2}.
\end{equation*}
In particular,
\begin{equation*}
    \mathcal H_{1/2}([0,T];\mathbb R^n)
    =
    \left\{
        h\in AC([0,T];\mathbb R^n):
        h(0)=0,\;
        h'\in L^2(0,T;\mathbb R^n)
    \right\}.
\end{equation*}

The inverse of $K_H$ admits the following fractional-calculus
representation; see \cite[Lemma~10]{Nualart}:
\begin{equation}
\label{eq:KH_inverse}
    K_H^{-1}h(s)
    =
    \begin{cases}
        \displaystyle
        s^\alpha D_{0^+}^\alpha
        \left[
            s^{-\alpha}D_{0^+}^{1-2\alpha}h
        \right](s),
        & H<\dfrac12,
        \\[2.5ex]
        \displaystyle
        h'(s),
        & H=\dfrac12,
        \\[2.5ex]
        \displaystyle
        s^\alpha D_{0^+}^\alpha
        \left[
            s^{-\alpha}h'
        \right](s),
        & H>\dfrac12.
    \end{cases}
\end{equation}
For sufficiently regular $h$ and $H<1/2$, the first expression reduces to
\begin{equation}
\label{eq:KH_inverse_H_less_half_smooth}
    K_H^{-1}h(s)
    =
    s^{-\alpha}
    I_{0^+}^\alpha
    \left[
        s^\alpha h'
    \right](s),
    \qquad
    \alpha=\frac12-H.
\end{equation}

We shall also use the following standard Young integration estimate.

\begin{lemma}[Young integration]
\label{lem:young-integration}
Let $f\in C^\beta([0,T])$ and $g\in C^\gamma([0,T])$, where
$\beta,\gamma\in(0,1)$ and $\beta+\gamma>1$. Then the Young integral
$\int_0^t f_s\,dg_s$ is well defined and
\begin{equation}
\label{eq:young-loeve-estimate}
    \left|
        \int_s^t f_r\,dg_r
        -
        f_s(g_t-g_s)
    \right|
    \leq
    C_{\beta,\gamma}
    [f]_\beta[g]_\gamma
    |t-s|^{\beta+\gamma}.
\end{equation}
The result extends componentwise to vector- and matrix-valued functions.
\end{lemma}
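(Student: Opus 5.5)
The plan is the classical Young construction via Riemann–Stieltjes sums together with the sewing argument. For a partition $\pi=\{s=t_0<t_1<\dots<t_m=t\}$ of $[s,t]$ we set
\begin{equation*}
    S_\pi := \sum_{k=0}^{m-1} f_{t_k}\,(g_{t_{k+1}}-g_{t_k}),
\end{equation*}
and define $\int_s^t f_r\,dg_r$ as the limit of $S_\pi$ as the mesh of $\pi$ tends to zero. Write $\theta:=\beta+\gamma>1$ and $\Xi_{u,v}:=f_u(g_v-g_u)$. For $u<w<v$ the germ $\Xi$ has defect
\begin{equation*}
    \Xi_{u,v}-\Xi_{u,w}-\Xi_{w,v} = -(f_w-f_u)(g_v-g_w),
\end{equation*}
so that
\begin{equation*}
    |\delta\Xi_{u,w,v}|\le [f]_\beta[g]_\gamma|v-u|^{\theta}.
\end{equation*}

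\textbf{Step 1 (uniform bound on Riemann sums).} This is the key step, and the one needing the most care. Fix $\pi$ with $m\ge2$ intervals. By pigeonholing there is an interior point $t_k$ whose neighbours satisfy
\begin{equation*}
    t_{k+1}-t_{k-1}\le \frac{2(t-s)}{m-1}.
\end{equation*}
Removing $t_k$ changes $S_\pi$ by exactly $\delta\Xi_{t_{k-1},t_k,t_{k+1}}$, whose size is at most
\begin{equation*}
    [f]_\beta[g]_\gamma\Bigl(\tfrac{2(t-s)}{m-1}\Bigr)^{\theta}.
\end{equation*}
We iterate until only the trivial partition $\{s,t\}$ remains, whose sum is $f_s(g_t-g_s)$. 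This yields
\begin{equation*}
    |S_\pi-f_s(g_t-g_s)|\le 2^\theta\zeta(\theta)[f]_\beta[g]_\gamma|t-s|^\theta,
\end{equation*}
uniformly in $\pi$, and the series converges precisely because $\theta>1$.

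\textbf{Step 2 (existence of the limit).} Let $\pi'$ be any refinement of $\pi$. Applying Step~1 on each subinterval $[t_k,t_{k+1}]$ gives
\begin{equation*}
    |S_{\pi'}-S_\pi|\le C\sum_k |t_{k+1}-t_k|^\theta \le C\,|\pi|^{\theta-1}(t-s)\to0 .
\end{equation*}
For two arbitrary partitions we compare both with their common refinement. Hence $(S_\pi)$ is Cauchy as the mesh $|\pi|\to0$, the limit exists and is independent of the choice of partitions, and it is additive over intervals.

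\textbf{Step 3 (conclusion).} Passing to the limit in the bound of Step~1 gives \eqref{eq:young-loeve-estimate} with $C_{\beta,\gamma}=2^{\beta+\gamma}\zeta(\beta+\gamma)$.

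For vector- or matrix-valued $f,g$, each entry of $\int f\,dg$ is a finite sum of scalar Young integrals of components. The estimate therefore follows entrywise, with a constant that depends only on the dimensions.
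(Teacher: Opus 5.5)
Your proposal is correct. It is the classical Young point-removal (sewing) argument for the Young--Lo\`eve estimate, and it yields the standard constant $2^{\beta+\gamma}\zeta(\beta+\gamma)$. The paper gives no proof of this lemma: it quotes it as a standard estimate. Your argument therefore supplies the standard proof the paper relies on, and there is no alternative route in the paper to compare it with.
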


We impose the following standing assumptions.

\begin{assumption}
\label{ass:A}
The drift satisfies
\begin{equation*}
    b\in
    C_b^{1,3}
    \bigl([0,T]\times\mathbb R^n;\mathbb R^n\bigr).
\end{equation*}
The diffusion matrix
\begin{equation*}
    \sigma_t
    =
    \operatorname{diag}
    \bigl(\sigma_t^1,\ldots,\sigma_t^n\bigr)
\end{equation*}
is deterministic, with $\sigma^i\in C^1([0,T])$, and there exist
$m_i,M_i>0$ such that
\begin{equation*}
    0<m_i\leq\sigma_t^i\leq M_i,
    \qquad
    t\in[0,T],
    \quad
    1\leq i\leq n.
\end{equation*}
\end{assumption}

Define the modified Volterra operator by
\begin{equation}
\label{khsigma}
    (K_H^\sigma u)(t)
    :=
    \int_0^t \sigma_s\,d(K_Hu)(s),
    \qquad
    u\in L^2([0,T];\mathbb R^n),
\end{equation}
where the integral is understood in the Young sense. The
$\sigma$-weighted fractional Cameron--Martin space is
\begin{equation}
\label{eq:sigma_weighted_CM_space}
    \mathcal H_H^\sigma([0,T];\mathbb R^n)
    :=
    K_H^\sigma
    \bigl(L^2([0,T];\mathbb R^n)\bigr),
\end{equation}
with inner product
\begin{equation*}
    \langle h,g\rangle_{\mathcal H_H^\sigma}
    :=
    \left\langle
        (K_H^\sigma)^{-1}h,
        (K_H^\sigma)^{-1}g
    \right\rangle_{L^2}.
\end{equation*}
Equivalently,
\begin{equation*}
    \mathcal H_H^\sigma
    =
    \left\{
        h\in C_0([0,T];\mathbb R^n):
        \int_0^\cdot\sigma_s^{-1}\,dh_s
        \in\mathcal H_H
    \right\},
\end{equation*}
and
\begin{equation}
\label{eq:Ksigma_inverse}
    (K_H^\sigma)^{-1}h
    =
    K_H^{-1}
    \left(
        \int_0^\cdot\sigma_s^{-1}\,dh_s
    \right).
\end{equation}

For sufficiently smooth $h$,
\begin{equation}
\label{eq:Ksigma_inverse_explicit}
    (K_H^\sigma)^{-1}h(s)
    =
    \begin{cases}
        \displaystyle
        s^{-\alpha}I_{0^+}^\alpha
        \left(
            s^\alpha\sigma_s^{-1}h'(s)
        \right),
        & H<\dfrac12,
        \quad
        \alpha=\dfrac12-H,
        \\[2.5ex]
        \displaystyle
        \sigma_s^{-1}h'(s),
        & H=\dfrac12,
        \\[2.5ex]
        \displaystyle
        s^\alpha D_{0^+}^\alpha
        \left(
            s^{-\alpha}\sigma_s^{-1}h'(s)
        \right),
        & H>\dfrac12,
        \quad
        \alpha=H-\dfrac12.
    \end{cases}
\end{equation}

Since $\sigma$ and $\sigma^{-1}$ are bounded and continuously
differentiable, the transformation
\begin{equation*}
    h\longmapsto\int_0^\cdot\sigma_s\,dh_s
\end{equation*}
is an isomorphism of $\mathcal H_H$. Consequently,
$\mathcal H_H^\sigma$ and $\mathcal H_H$ coincide as sets and have
equivalent norms. In particular, for every $\beta<H$,
\begin{equation*}
    \mathcal H_H^\sigma([0,T];\mathbb R^n)
    \hookrightarrow
    C_0^\beta([0,T];\mathbb R^n)
\end{equation*}
continuously.

	        \subsection{Small-ball estimates and the GCI }
In this subsection, we use the Gaussian correlation inequality (GCI) to
control the coupling terms arising from multidimensional fractional noise.

\begin{theorem}[Gaussian Correlation Inequality]
Let $\mu$ be a centered Radon Gaussian measure on a separable Banach space $E$. For any two $\mu$-measurable, symmetric, and convex sets $C_1, C_2 \subseteq E$, the following inequality holds:
\begin{equation} \label{GCI}
\mu(C_1 \cap C_2) \geq \mu(C_1) \mu(C_2).
\end{equation}
\end{theorem}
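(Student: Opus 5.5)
The paper states this as a known result (Royen's theorem), and in the body it would simply be cited. If I had to supply a proof, I would follow Royen's argument, in the streamlined form of Lata{\l}a and Matlak. The plan is to reduce to finite dimensions and symmetric slabs, and then prove a monotonicity statement along an interpolation of covariance matrices.

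\emph{Reduction to finite dimensions and slabs.} First I reduce to closed $C_1,C_2$. A Radon Gaussian measure is inner regular. Together with the fact that a symmetric convex set differs from its closure by a $\mu$-null set modulo its boundary, this should justify approximating from inside by closed symmetric convex sets. If that step resists, I would restrict to the closed sets actually used later, namely sup-norm balls. By Hahn--Banach, a closed symmetric convex set in a separable Banach space is a countable intersection of symmetric slabs $\{|f|\le 1\}$ with $f\in E^*$. By monotone continuity of $\mu$ it then suffices to prove
\[
\mathbb P\Bigl(\max_{i\le k}|X_i|\le1,\ \max_{k<i\le d}|X_i|\le1\Bigr)\ \ge\ \mathbb P\Bigl(\max_{i\le k}|X_i|\le1\Bigr)\,\mathbb P\Bigl(\max_{k<i\le d}|X_i|\le1\Bigr).
\]
Here $X=(f_1,\dots,f_d)$ is a centered Gaussian vector in $\mathbb R^d$ with covariance $C=\begin{pmatrix}C_{11}&C_{12}\\ C_{21}&C_{22}\end{pmatrix}$. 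By a small perturbation and continuity, $C$ may be assumed nondegenerate.

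\emph{Interpolation.} For $\tau\in[0,1]$ set
\[
C(\tau)=\begin{pmatrix}C_{11}&\tau C_{12}\\ \tau C_{21}&C_{22}\end{pmatrix},
\]
which is positive definite as a convex combination of $C$ and $\operatorname{diag}(C_{11},C_{22})$. Let $p(\tau)$ be the probability of the cube $\{|x_i|\le1\ \forall i\}$ under $N(0,C(\tau))$. Then $p(0)$ is the product and $p(1)$ is the joint probability, so it suffices to show $p'(\tau)\ge0$. To do this, pass to $Z_i=X_i^2/2$, whose law has Laplace transform $\mathbb E e^{-\langle\lambda,Z\rangle}=\det(I+\Lambda C(\tau))^{-1/2}$ with $\Lambda=\operatorname{diag}(\lambda)$. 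Differentiate in $\tau$ and expand the determinant via principal minors, using the multilinearity of $\det(I+\Lambda C(\tau))$ in $\lambda$. The $\tau$-derivative of the Laplace transform can then be written as a positive combination of terms of the form $\lambda_i\lambda_j\times$ (Laplace transforms of densities of multivariate gamma type). Each such term corresponds, after inverting the transform, to $\partial_{z_i}\partial_{z_j}$ of a nonnegative density. Integrating over $[0,1/2]^d$ converts these second derivatives into nonnegative boundary contributions, which gives $p'(\tau)\ge0$.

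\emph{Main obstacle.} The hard step is the positivity in this last computation. One must show that, for $\tau\in(0,1)$, the relevant functions $\det(I+\Lambda C(\tau))^{-\kappa}$ (for $\kappa=1/2$ and the shifted exponents that appear after differentiation) are Laplace transforms of nonnegative measures on $\mathbb R_+^d$. This is exactly Royen's observation on the infinite divisibility of multivariate gamma laws with the given Schur-type structure. I would first attempt it by the $\kappa=1/2$ to $\kappa=1$ trick, writing the derivative as an expectation over an auxiliary $(d+2)$-dimensional Gaussian, as in Lata{\l}a--Matlak. The remaining reductions (closedness, separability, the approximation of $\mu$ by its finite-dimensional marginals) are routine measure-theoretic steps.
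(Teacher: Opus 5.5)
Your reduction matches the paper's proof. The paper does not reprove the finite-dimensional inequality; it cites Royen, and its remark carries out exactly your Hahn--Banach reduction to countably many symmetric slabs followed by continuity from above. Two things you add beyond this need correcting.

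\emph{Closedness.} The fact you invoke for passing from $\mu$-measurable to closed sets is false in infinite dimensions. The Cameron--Martin space $\mathcal H$ is a Borel, symmetric, convex set with $\mu(\mathcal H)=0$, yet its closure is the support of $\mu$. Inner regularity only gives a compact $K\subseteq C$. Then $\operatorname{conv}(K\cup -K)\subseteq C$, but this hull need not be closed, and its closure need not lie in $C$. The paper's ``by inner regularity'' skips the same point, so your fallback of restricting to closed sets is the honest option. It is also enough here. The tubes the paper uses are H\"older balls $\{[\,\cdot\,]_\beta\le\varepsilon\}$, not sup-norm balls. They are closed in $C_0([0,T])$ because $[\,\cdot\,]_\beta$ is lower semicontinuous under uniform convergence. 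The strict-inequality tubes are increasing unions of such balls and follow by continuity from below.

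\emph{The finite-dimensional sketch.} Your outline of Royen's argument is wrong at the decisive points, and as written it would not close. Write $J_1=J\cap\{1,\dots,k\}$ and $J_2=J\cap\{k+1,\dots,d\}$. The principal-minor expansion gives
\[
\partial_\tau\det(I+\Lambda C(\tau))^{-1/2}=\tfrac12\det(I+\Lambda C(\tau))^{-3/2}\sum_{J}a_J(\tau)\prod_{j\in J}\lambda_j,\qquad a_J(\tau)=-\partial_\tau\det C(\tau)_J,
\]
where $J$ runs over all index sets with $J_1,J_2\neq\varnothing$. So the monomials have every degree $\ge2$. They correspond to the mixed derivatives $\partial_J=\prod_{j\in J}\partial_{x_j}$, not only to $\lambda_i\lambda_j$ and $\partial_{z_i}\partial_{z_j}$.

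The sign $a_J\ge0$ is a lemma you do not state. It follows from $\det C(\tau)_J=\det C_{J_1J_1}\det C_{J_2J_2}\prod_k(1-\tau^2\mu_k)$, where $\mu_k\in[0,1)$ are the eigenvalues of $C_{J_1J_1}^{-1/2}C_{J_1J_2}C_{J_2J_2}^{-1}C_{J_2J_1}C_{J_1J_1}^{-1/2}$.

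Differentiation produces the exponent $3/2$, not $1$. The positivity you call the main obstacle is trivial. $\det(I+\Lambda C)^{-3/2}$ is the Laplace transform of the law of $\tfrac12\sum_{k=1}^3(G^{(k)}_i)^2$, where the $G^{(k)}$ are three independent copies of the Gaussian vector. No infinite divisibility is needed.

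What actually carries the proof is that this density $h_\tau$ vanishes on the faces $\{x_j=0\}$, because it has three degrees of freedom. Hence $\int_{[0,1/2]^d}\partial_J h_\tau\,dx=\int h_\tau|_{x_J=1/2}\,dx_{J^c}\ge0$: only the upper faces survive. With a density that is positive at $x_j=0$, as for exponent $1$, the inclusion--exclusion over the corners of the box produces terms of both signs, and the argument fails.
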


\begin{remark}
While the Gaussian Correlation Inequality \eqref{GCI} was originally proved by Royen \cite{royen2014convex} (and further clarified by Lata{\l}a and Matlak \cite{latala2017royen}) for standard Gaussian measures on finite-dimensional Euclidean spaces $\mathbb{R}^d$, its extension to any separable Banach space $E$ equipped with a centered Radon Gaussian measure $\mu$ is a standard consequence of measure-theoretic approximation.

Specifically, by the  Hahn-Banach theorem, any closed, symmetric, and convex set $C \subseteq E$ is weakly closed and can be represented as the countable intersection of symmetric strips:
\[
    C = \bigcap_{i=1}^\infty \left\{ \omega \in E : |l_i(\omega)| \leq c_i \right\},
\]
where $\{l_i\}_{i=1}^\infty \subset E^*$ is a countable separating family of continuous linear functionals and $c_i > 0$. For any integer $n \geq 1$, define the finite-dimensional cylindrical sets
\[
    C^{(n)}_1 = \bigcap_{i=1}^n \left\{ \omega \in E : |l^{(1)}_i(\omega)| \leq c^{(1)}_i \right\} \quad \text{and} \quad C^{(n)}_2 = \bigcap_{i=1}^n \left\{ \omega \in E : |l^{(2)}_i(\omega)| \leq c^{(2)}_i \right\}.
\]
Since $C^{(n)}_1$ and $C^{(n)}_2$ depend only on the finite-dimensional projections of the Gaussian measure $\mu$, the finite-dimensional GCI is directly applicable, yielding $\mu\big(C^{(n)}_1 \cap C^{(n)}_2\big) \geq \mu\big(C^{(n)}_1\big) \mu\big(C^{(n)}_2\big)$.
Because the sequences of sets $\big\{C^{(n)}_1\big\}_{n=1}^\infty$ and $\big\{C^{(n)}_2\big\}_{n=1}^\infty$ are monotonically decreasing with $C_k = \bigcap_{n=1}^\infty C^{(n)}_k$ for $k=1,2$, the continuity of the Radon measure $\mu$ from above guarantees that the inequality passes to the limit as $n \to \infty$. The result for arbitrary $\mu$-measurable symmetric convex sets then follows by inner regularity.
\end{remark}

\begin{remark}
In our context, the $\varepsilon$-tube in the H\"{o}lder space, defined by $A_\varepsilon = \{ \omega : \|\int_0^\cdot \sigma_u dB^H_u(\omega)\|_\beta < \varepsilon \}$, constitutes a symmetric convex set. 
\end{remark}

\begin{corollary} \label{gauss set}
Let $(\Omega, \mathcal{F}, \mathbb{P})$ be an abstract Wiener space (where $\Omega$ is a separable Banach space and $\mathbb{P}$ is the centered standard Gaussian measure on $\Omega$). 
Let $A \subseteq \Omega$ be a $\mu$-measurable, symmetric, and convex set, and let $f: \Omega \to [0, \infty)$ be a $\mathbb{P}$-measurable, symmetric, and convex function. Then for every $t > 0$, the following estimate holds:
\begin{equation*}
    \mathbb{P}\bigl(\{x \in \Omega : f(x) \le t\} \cap A\bigr)
    \ge
    \mathbb{P}\bigl(\{x \in \Omega: f(x) \le t\}\bigr) \mathbb{P}(A).
\end{equation*}
\end{corollary}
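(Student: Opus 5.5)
The plan is to apply the Gaussian Correlation Inequality \eqref{GCI} directly with $C_1:=\{x\in\Omega: f(x)\le t\}$ and $C_2:=A$. For this we need three facts: $\mathbb P$ is a centered Radon Gaussian measure on a separable Banach space; $C_1$ is measurable, symmetric and convex; and $A$ has the same properties.

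The first fact holds because $\Omega$ is separable. Every finite Borel measure on a separable metrizable complete space, in particular on a Polish space, is Radon. The measure $\mathbb P$ is centered Gaussian by hypothesis. The third fact is given in the statement, reading ``$\mu$-measurable'' as $\mathbb P$-measurable.

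For $C_1$, measurability follows from the measurability of $f$, since $C_1=f^{-1}([0,t])$. Symmetry follows from $f(-x)=f(x)$: if $x\in C_1$ then $f(-x)=f(x)\le t$, so $-x\in C_1$. Convexity follows from the convexity of $f$: for $x,y\in C_1$ and $\lambda\in[0,1]$,
\begin{equation*}
    f\bigl(\lambda x+(1-\lambda)y\bigr)\le\lambda f(x)+(1-\lambda)f(y)\le t .
\end{equation*}
Sublevel sets of convex functions are convex, so $C_1$ qualifies. Applying \eqref{GCI} then gives
\begin{equation*}
    \mathbb P(C_1\cap A)\ge\mathbb P(C_1)\,\mathbb P(A),
\end{equation*}
which is exactly the asserted estimate.

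The only delicate point is that \eqref{GCI}, in the form justified in the preceding remark, is obtained first for closed symmetric convex sets and then extended to merely measurable ones by inner regularity. Since $C_1$ need not be closed when $f$ is only measurable, we rely on the theorem's statement for $\mu$-measurable sets as given. If we had to be careful, we would note that in the intended applications $f$ is a seminorm-type functional, such as a Hölder or sup norm of a linear image of $x$. Such an $f$ is lower semicontinuous, which makes $C_1$ closed, so the remark's approximation argument applies verbatim.
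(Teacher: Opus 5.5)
Your proposal is correct and follows essentially the same route as the paper: both check that the sublevel set $\{f\le t\}$ is measurable, symmetric and convex, and then apply \eqref{GCI} directly to it and to $A$. Your remarks on the Radon property and on closed versus merely measurable sets are more careful than the paper's one-line justification, but they do not change the argument; like the paper, you take the measurable-set version of the Gaussian correlation inequality as given.
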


\begin{proof}
By the assumptions on $f$, the lower level set $C_t := \{x \in \Omega : f(x) \le t\}$ is $\mu$-measurable, symmetric, and convex in $\Omega$ for any given $t > 0$. Since $\Omega$ is a separable Banach space and $\mathbb{P}$ is a centered Gaussian measure on $\Omega$, we can directly apply the Gaussian Correlation Inequality \eqref{GCI} to the sets $C_t$ and $A$. This immediately yields
\begin{equation*}
    \mathbb{P}(C_t \cap A) \ge \mathbb{P}(C_t) \mathbb{P}(A),
    \end{equation*}
which completes the proof.
\end{proof}

\begin{remark}
It is worth noting that in the absence of a general proof for \eqref{GCI} at the time, \cite[Corollary 4.6.3]{Bogachev1998} established the above inequality only for the specific case where the level sets are defined by linear functionals. The breakthrough by Royen \cite{royen2014convex} allows us to extend this result to the general convex setting presented here.
\end{remark}

\begin{lemma}[Layer Cake Representation]\label{tail}
Let $Y$ be a non-negative random variable. Then
\begin{equation*}
    \mathbb{E}[Y] = \int_0^\infty \mathbb{P}(Y > s) \, ds.
\end{equation*}
\end{lemma}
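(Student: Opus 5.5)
The layer cake identity follows from Tonelli's theorem applied to the pointwise representation $Y(\omega)=\int_0^\infty \mathbf 1_{\{Y(\omega)>s\}}\,ds$. This identity holds for every $\omega$ because $Y(\omega)\ge 0$. Indeed, the integrand equals $1$ exactly for $s\in[0,Y(\omega))$ and $0$ otherwise, so the integral equals the length $Y(\omega)$. This includes the case $Y(\omega)=+\infty$, where both sides are infinite.

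Next, I will verify joint measurability so that Tonelli's theorem applies. The map $(\omega,s)\mapsto \mathbf 1_{\{Y(\omega)>s\}}$ is the indicator of the set
\[
E=\{(\omega,s)\in\Omega\times[0,\infty): Y(\omega)-s>0\}.
\]
The function $(\omega,s)\mapsto Y(\omega)-s$ is $\mathcal F\otimes\mathcal B([0,\infty))$-measurable, since it is a difference of measurable functions of the separate coordinates; if $Y$ takes the value $+\infty$, I read $+\infty-s$ as $+\infty$. Hence $E$ lies in the product $\sigma$-algebra. The integrand is nonnegative and $\mathbb P\otimes ds$ is a product of $\sigma$-finite measures.

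Finally, I take expectations and interchange the order of integration:
\[
\mathbb E[Y]=\int_\Omega\int_0^\infty \mathbf 1_E(\omega,s)\,ds\,d\mathbb P(\omega)=\int_0^\infty\int_\Omega \mathbf 1_E(\omega,s)\,d\mathbb P(\omega)\,ds=\int_0^\infty \mathbb P(Y>s)\,ds.
\]
Tonelli's theorem needs no integrability hypothesis here, so the identity holds in $[0,\infty]$; both sides are simultaneously finite or infinite. There is no genuine obstacle in this argument. The only point requiring care is the joint measurability of $E$, which the previous paragraph settles.
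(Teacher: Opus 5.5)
Your proof is correct: the pointwise identity $Y(\omega)=\int_0^\infty \mathbf{1}_{\{Y(\omega)>s\}}\,ds$, the joint measurability of $\{(\omega,s): s<Y(\omega)\}$, and Tonelli's theorem for the $\sigma$-finite product $\mathbb{P}\otimes ds$ give the identity in $[0,\infty]$. The paper states this lemma as a standard fact without proof, and your Fubini--Tonelli argument is the standard proof; it applies verbatim to the conditional measure $\mathbb{P}(\cdot\mid A)$ that the paper uses in Lemma~\ref{conditional-expectation}.
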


The following theorem demonstrates that under the symmetry and convexity assumptions, the conditional expectation over a symmetric convex set is dominated by the unconditional expectation.

\begin{lemma}\label{conditional-expectation}
Let $f$ be a non-negative, convex, symmetric, and measurable function on the Wiener space $(\Omega, \mathcal{F}, \mathbb{P})$. Let $A \in \mathcal{F}$ be a symmetric convex set with $\mathbb{P}(A) > 0$. Then the following inequality holds:
\begin{equation*}
    \mathbb{E}\bigl[f \mid A\bigr] \le \mathbb{E}\bigl[f\bigr].
\end{equation*}
\end{lemma}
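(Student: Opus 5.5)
The plan is to combine the layer cake representation (Lemma~\ref{tail}) with the Gaussian correlation inequality in the form of Corollary~\ref{gauss set}. Since $f\ge 0$ is measurable, both expectations can be written as integrals of tail probabilities:
\begin{equation*}
    \mathbb E[f\mid A]=\int_0^\infty \mathbb P(f>s\mid A)\,ds,
    \qquad
    \mathbb E[f]=\int_0^\infty \mathbb P(f>s)\,ds.
\end{equation*}
To apply Lemma~\ref{tail} to the conditional expectation, I would use it under the probability measure $\mathbb P(\cdot\mid A)=\mathbb P(\cdot\cap A)/\mathbb P(A)$. This is legitimate because $\mathbb P(A)>0$. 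It is therefore enough to show that
\begin{equation*}
    \mathbb P(f>s\mid A)\le \mathbb P(f>s)
    \qquad\text{for every } s>0.
\end{equation*}

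For fixed $s>0$, set $C_s:=\{f\le s\}$. Because $f$ is convex and symmetric ($f(-x)=f(x)$), the set $C_s$ is convex and symmetric, and it is measurable because $f$ is. Corollary~\ref{gauss set} applied with $t=s$ gives
\begin{equation*}
    \mathbb P(C_s\cap A)\ge \mathbb P(C_s)\,\mathbb P(A).
\end{equation*}
Dividing by $\mathbb P(A)$ yields $\mathbb P(C_s\mid A)\ge \mathbb P(C_s)$. Passing to complements, $\mathbb P(f>s\mid A)=1-\mathbb P(C_s\mid A)\le 1-\mathbb P(C_s)=\mathbb P(f>s)$. Integrating this inequality over $s\in(0,\infty)$ then gives the claim. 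The argument also covers the case $\mathbb E[f]=+\infty$, where the inequality holds trivially.

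The step I expect to need the most care is checking the hypotheses of the Gaussian correlation inequality. The measure must be a centered Gaussian (Radon) measure on a separable Banach space, and the level sets $C_s$ must be measurable, symmetric and convex; Corollary~\ref{gauss set} states exactly this, so I would cite it directly. I would also note that if $C_s$ is not closed, the inner-regularity extension mentioned in the preceding remark handles it. Convexity of $f$ guarantees convexity of the sublevel sets, and symmetry of $f$ guarantees their symmetry. No further difficulty is expected.
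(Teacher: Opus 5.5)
Your proposal is correct and follows the paper's proof essentially step for step: apply Corollary~\ref{gauss set} to the sublevel sets $\{f\le t\}$, pass to complements to get $\mathbb P(f>t\mid A)\le\mathbb P(f>t)$, and integrate via the layer cake representation (Lemma~\ref{tail}). Your added remarks, namely applying Lemma~\ref{tail} under $\mathbb P(\cdot\mid A)$ and treating the case $\mathbb E[f]=+\infty$, only make explicit details that the paper leaves implicit.
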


\begin{proof}
For each $t \ge 0$, we consider the level set $S_t := \{\omega \in \Omega : f(\omega) \le t\}$. Since $f$ is assumed to be symmetric and convex, $S_t$ is a symmetric convex measurable set in $\Omega$. Applying Corollary \ref{gauss set} (with $\mu = \mathbb{P}$ and $X = \Omega$), we have
\begin{equation*}
    \mathbb{P}(S_t \cap A) \ge \mathbb{P}(S_t) \mathbb{P}(A).
\end{equation*}
By the properties of the probability measure restricted to the set $A$, we observe that
\begin{align*}
    \mathbb{P}(S_t^c \cap A) &= \mathbb{P}(A) - \mathbb{P}(S_t \cap A) \\
    &\le \mathbb{P}(A) - \mathbb{P}(S_t) \mathbb{P}(A) \\
    &= (1 - \mathbb{P}(S_t)) \mathbb{P}(A) = \mathbb{P}(S_t^c) \mathbb{P}(A).
\end{align*}
Dividing both sides by $\mathbb{P}(A)$ yields the comparison for the conditional tail probabilities:
\begin{equation*}
    \mathbb{P}(f > t \mid A) = \frac{\mathbb{P}(S_t^c \cap A)}{\mathbb{P}(A)} \le \mathbb{P}(S_t^c) = \mathbb{P}(f > t).
\end{equation*}
Integrating the above tail probability with respect to $t$ and invoking Lemma \ref{tail}, we conclude that
\begin{align*}
    \mathbb{E}\bigl[f \mid A \bigr] &= \int_0^\infty \mathbb{P}(f > t \mid A) \, dt \\
    &\le \int_0^\infty \mathbb{P}(f > t) \, dt \\
    &= \mathbb{E}\bigl[f\bigr].
\end{align*}
This completes the proof.
\end{proof}

\subsection{Operator-theoretic preliminaries}

We recall several operator-theoretic results used below; see
\cite{brezis2011functional,evans2010partial}. For Banach spaces $X$ and $Y$,
let $\mathcal B(X,Y)$ denote the space of bounded linear operators from $X$
to $Y$, and write $\mathcal B(X):=\mathcal B(X,X)$.

\begin{theorem}[Fredholm alternative]
\label{thm:fredholm_alternative}
Let $X$ be a Banach space, $A\in\mathcal B(X)$ be compact, and
$\lambda\neq0$. Then exactly one of the following alternatives holds:
\begin{enumerate}
    \item
    $\ker(\lambda I-A)=\{0\}$, in which case $\lambda I-A$ is boundedly
    invertible on $X$;
    \item
    $\ker(\lambda I-A)\neq\{0\}$, in which case
    \begin{equation*}
        \dim\ker(\lambda I-A)
        =
        \dim\ker(\lambda I^*-A^*)
        <\infty,
    \end{equation*}
    and the equation
    \begin{equation*}
        (\lambda I-A)u=f
    \end{equation*}
    is solvable if and only if
    \begin{equation*}
        \langle f,v^*\rangle=0
        \qquad
        \text{for every }
        v^*\in\ker(\lambda I^*-A^*).
    \end{equation*}
\end{enumerate}
\end{theorem}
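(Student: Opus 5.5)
The statement to prove is the Fredholm alternative (Theorem~\ref{thm:fredholm_alternative}) for a compact operator $A$ on a Banach space $X$ and $\lambda\neq0$. Dividing by $\lambda$, we may assume $\lambda=1$ and set $T:=I-A$. The plan is the classical Riesz--Schauder argument, carried out in the following order.

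\textbf{Step 1: finite-dimensional kernel.} We show $\dim\ker T<\infty$. On $\ker T$ the identity coincides with $A$, so the closed unit ball of $\ker T$ is compact. Riesz's lemma then forces $\ker T$ to be finite-dimensional.

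\textbf{Step 2: closed range.} We show $R(T)$ is closed. Since $\ker T$ is finite-dimensional, it admits a closed complement $Z$. We claim there is $c>0$ with
\begin{equation*}
\|Tz\|\geq c\|z\|,\qquad z\in Z .
\end{equation*}
If not, choose unit vectors $z_k\in Z$ with $Tz_k\to0$. By compactness a subsequence of $Az_k$ converges, so $z_k=Tz_k+Az_k$ converges to some $z\in Z$ with $\|z\|=1$ and $Tz=0$. This contradicts $Z\cap\ker T=\{0\}$, so the bound holds and closedness of $R(T)$ follows.

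\textbf{Step 3: injective implies surjective.} This is the step I expect to be the main obstacle. Suppose $\ker T=\{0\}$ but $R(T)\neq X$. Set $X_k:=T^k(X)$. These form a decreasing chain of closed subspaces, closed by Step 2 applied to $T^k=I-A_k$ with $A_k$ compact. The chain is strictly decreasing by injectivity. Riesz's lemma gives unit vectors $u_k\in X_k$ with $\operatorname{dist}(u_k,X_{k+1})\geq1/2$. For $k<l$,
\begin{equation*}
Au_k-Au_l=u_k-\bigl(Tu_k-Tu_l+u_l\bigr),
\end{equation*}
and the bracketed term lies in $X_{k+1}$. Hence $\|Au_k-Au_l\|\geq1/2$, contradicting compactness of $A$. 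Therefore $T$ is bijective. Its bounded inverse follows from the open mapping theorem, or directly from the lower bound in Step 2 with $Z=X$. This gives alternative~1.

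\textbf{Step 4: index zero and solvability.} Using $R(T)=\ker(T^*)^\perp$, which holds because $R(T)$ is closed, the equation $Tu=f$ is solvable if and only if $f$ annihilates $\ker(I^*-A^*)$. By Schauder's theorem $A^*$ is compact, so Steps 1--3 apply to $T^*$ as well.

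It remains to prove $\dim\ker T=\dim\ker T^*$; this equality is the second delicate point. First show $\dim\ker T\geq\dim\ker T^*$. Suppose $\dim\ker T<\dim\ker T^*$. Take a basis $e_1,\dots,e_d$ of $\ker T$ with dual functionals $e_j^*\in X^*$. Since $R(T)$ has codimension $\dim\ker T^*>d$, we can choose $f_1,\dots,f_d$ linearly independent modulo $R(T)$. Define the finite-rank perturbation
\begin{equation*}
S:=T+\sum_{j=1}^d e_j^*(\cdot)f_j .
\end{equation*}
Then $S=I-(\text{compact})$ and $S$ is injective. By Step 3, $S$ is surjective. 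But $R(S)$ is contained in $R(T)$ plus the span of the $f_j$, a proper subspace, which is a contradiction. Applying the same argument to $T^*$ gives the reverse inequality, using $\dim\ker T^{**}\geq\dim\ker T$ via the canonical embedding $X\hookrightarrow X^{**}$. This yields equality of the dimensions and completes alternative~2.
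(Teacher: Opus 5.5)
Your proof is correct. It is the standard Riesz--Schauder argument: finite-dimensional kernel via Riesz's lemma, closed range, injectivity $\Rightarrow$ surjectivity via the chain $T^k(X)$, and index zero via a finite-rank perturbation together with the bidual embedding. This is essentially the proof of Theorem~6.6 in Brezis's book. The paper gives no proof of this statement and simply quotes it from Brezis and Evans, so your argument supplies the standard proof that the paper relies on.
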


A Volterra integral operator on $[0,T]$ has the form
\begin{equation*}
    (Kf)(t)
    =
    \int_0^t k(t,s)f(s)\,ds.
\end{equation*}
The following standard result will be used repeatedly; see
\cite[Sec.~9, Cor.~3.18]{gripenberg1990volterra}.

\begin{theorem}[Weakly singular Volterra operators]
\label{volterra}
Let $1<p<\infty$, and suppose that the measurable Volterra kernel
$k:(0,T)^2\to\mathbb R^{d\times d}$ satisfies
\begin{equation*}
    |k(t,s)|
    \leq
    C(t-s)^{-\alpha},
    \qquad
    0<s<t<T,
\end{equation*}
for some $\alpha\in(0,1)$. Then the operator
\begin{equation*}
    (Kf)(t)
    :=
    \int_0^t k(t,s)f(s)\,ds
\end{equation*}
is bounded and quasinilpotent on $L^p(0,T;\mathbb R^d)$:
\begin{equation*}
    \lim_{m\to\infty}
    \|K^m\|_{\mathcal B(L^p)}^{1/m}
    =
    0.
\end{equation*}
Consequently, $I-\lambda K$ is boundedly invertible for every
$\lambda\in\mathbb C$.
\end{theorem}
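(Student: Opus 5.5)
The plan is to dominate $K$ pointwise by a scalar Riemann--Liouville fractional integral, bound the iterates $K^m$ explicitly, and then conclude with a Neumann series.

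\textbf{Step 1: reduction to a scalar operator.} Set $\beta:=1-\alpha\in(0,1)$. For $f\in L^p(0,T;\mathbb R^d)$ the kernel bound gives
\begin{equation*}
    |(Kf)(t)|\leq C\int_0^t (t-s)^{-\alpha}|f(s)|\,ds
    = C\,\Gamma(\beta)\,\bigl(I_{0^+}^{\beta}|f|\bigr)(t).
\end{equation*}
Hence it suffices to control the positive scalar operator $I_{0^+}^\beta$ on $L^p(0,T)$. Since $I_{0^+}^\beta$ is positive, pointwise domination propagates through compositions. By induction on $m$,
\begin{equation*}
    |(K^mf)(t)|\leq \bigl(C\Gamma(\beta)\bigr)^m\bigl(I_{0^+}^{m\beta}|f|\bigr)(t).
\end{equation*}
Here I use the semigroup property $I_{0^+}^{\beta}I_{0^+}^{\gamma}=I_{0^+}^{\beta+\gamma}$. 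It follows from Fubini, which is justified by nonnegativity (Tonelli), together with the Beta integral
\begin{equation*}
    \int_s^t (t-r)^{\beta-1}(r-s)^{\gamma-1}\,dr=B(\beta,\gamma)(t-s)^{\beta+\gamma-1}.
\end{equation*}
Measurability of the iterated kernels is likewise handled by Tonelli.

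\textbf{Step 2: norm bounds.} For any $\gamma>0$, the operator $I_{0^+}^\gamma$ is convolution with the truncated kernel $g_\gamma(u)=u^{\gamma-1}\mathbf 1_{(0,T)}(u)/\Gamma(\gamma)$, after extending $f$ by zero outside $(0,T)$. By Young's convolution inequality,
\begin{equation*}
    \|I_{0^+}^\gamma\|_{\mathcal B(L^p)}\leq \|g_\gamma\|_{L^1}=\frac{T^\gamma}{\Gamma(\gamma+1)}.
\end{equation*}
Taking $\gamma=\beta$ gives boundedness of $K$. Taking $\gamma=m\beta$ gives
\begin{equation*}
    \|K^m\|_{\mathcal B(L^p)}\leq \frac{\bigl(C\Gamma(\beta)T^{\beta}\bigr)^m}{\Gamma(m\beta+1)}.
\end{equation*}

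\textbf{Step 3: quasinilpotence and invertibility.} By Stirling's formula, $\Gamma(m\beta+1)^{1/m}\sim (m\beta/e)^{\beta}\to\infty$. Therefore $\|K^m\|^{1/m}\to0$, so the spectral radius of $K$ is zero. For any $\lambda\in\mathbb C$, the Neumann series $\sum_{j\ge0}\lambda^jK^j$ then converges absolutely in $\mathcal B(L^p)$ by the root test. Its sum is a two-sided bounded inverse of $I-\lambda K$.

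The only step needing real care is the induction in Step~1, specifically the exact Beta/Gamma bookkeeping that produces the factor $1/\Gamma(m\beta+1)$ rather than a merely geometric one. I would state the comparison with $I_{0^+}^{m\beta}$ explicitly so that this bookkeeping is transparent. The rest of the argument is routine.
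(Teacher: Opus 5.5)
Your proof is correct. It necessarily takes a different route from the paper, because the paper gives no argument for this statement: it quotes the result from Gripenberg--Londen--Staffans, where it is obtained within the general resolvent theory for Volterra kernels.

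Your approach is elementary and self-contained. You first get the comparison $|K^mf|\le (C\Gamma(1-\alpha))^m I_{0^+}^{m(1-\alpha)}|f|$ from the positivity of $I_{0^+}^{1-\alpha}$ and the semigroup law, and then conclude with Young's inequality and Stirling's formula. This gives more than the statement asks for. Nothing in it uses $1<p<\infty$, so the conclusion holds for every $1\le p\le\infty$. Summing your iterate bounds also gives an explicit resolvent estimate, which the citation leaves implicit:
\[
    \|(I-\lambda K)^{-1}\|_{\mathcal B(L^p)}
    \leq
    E_{1-\alpha}\bigl(|\lambda|\,C\,\Gamma(1-\alpha)\,T^{1-\alpha}\bigr),
    \qquad
    E_\beta(z):=\sum_{m\geq0}\frac{z^m}{\Gamma(m\beta+1)}.
\]
In exchange, the cited framework is more general: it is not tied to kernels with a pointwise power-law majorant.

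You should add one sentence. The statement allows $\lambda\in\mathbb C$, but the space $L^p(0,T;\mathbb R^d)$ is real. So say that, for non-real $\lambda$, the Neumann series is taken on the complexification $L^p(0,T;\mathbb C^d)$, where all your estimates carry over verbatim.
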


The Riemann--Liouville fractional integral
\begin{equation*}
    (I_{0+}^{\alpha}f)(t)
    =
    \frac{1}{\Gamma(\alpha)}
    \int_0^t
    (t-s)^{\alpha-1}f(s)\,ds,
    \qquad
    0<\alpha<1,
\end{equation*}
is a typical weakly singular Volterra operator.

Finally, let $X$ be a real Banach space and
$a:X\times X\to\mathbb R$ be a bilinear form. We call $a$ positive
definite if
\begin{equation*}
    a(x,x)>0,
    \qquad
    x\neq0,
\end{equation*}
and coercive if there exists $c>0$ such that
\begin{equation*}
    a(x,x)
    \geq
    c\|x\|_X^2,
    \qquad
    x\in X.
\end{equation*}

\subsection{Floquet coordinates and the linearized operator}
\label{subsec:floquet-linearized-operator}

We recall the Floquet representation used in the long-time analysis; see
\cite{perko2001differential}. Let $\bar X$ be a nonconstant $T$-periodic
solution of
\begin{equation*}
    \dot X_t=b(X_t).
\end{equation*}
The variational equation along $\bar X$ is
\begin{equation}
\label{eq:variational-equation}
    \dot v_t=\nabla b(\bar X_t)v_t.
\end{equation}
If $\Phi(t,0)$ denotes its principal fundamental matrix, Floquet theory gives
\begin{equation}
\label{eq:floquet-decomposition}
    \Phi(t,0)=P(t)e^{tR},
    \qquad
    P(t+T)=P(t),
\end{equation}
where $P(t)$ is invertible and $R$ is constant.

Since the system is autonomous, $\dot{\bar X}_t$ solves
\eqref{eq:variational-equation}. Hence $1$ is always a Floquet multiplier,
corresponding to the tangential direction of the periodic orbit.

\begin{definition}[Transversally hyperbolic periodic orbit]
\label{def:transversally-hyperbolic}
The periodic orbit $\bar X$ is transversally hyperbolic if the Floquet
multiplier $1$ is simple and every other Floquet multiplier $\mu$ satisfies
\begin{equation*}
    |\mu|\neq1.
\end{equation*}
\end{definition}

In Floquet coordinates adapted to the tangential and transverse directions,
we may write
\begin{equation}
\label{eq:R-block}
    R=
    \begin{pmatrix}
        0 & 0\\
        0 & R_\perp
    \end{pmatrix},
\end{equation}
where every eigenvalue of $R_\perp$ has nonzero real part. The first column
of $P(t)$ may be chosen as the tangential mode $\dot{\bar X}_t$.

Define the linearized operator along $\bar X$ by
\begin{equation}
\label{eq:linearized-operator}
    \mathcal L\eta_t
    :=
    \dot\eta_t-\nabla b(\bar X_t)\eta_t.
\end{equation}
Writing
\begin{equation}
\label{eq:floquet-coordinate}
    \eta_t=P(t)z_t,
\end{equation}
and differentiating \eqref{eq:floquet-decomposition}, we obtain
\begin{equation}
\label{eq:L-floquet-coordinate}
    \mathcal L\eta_t
    =
    P(t)\bigl(\dot z_t-Rz_t\bigr).
\end{equation}
Since $P$ is $T$-periodic, this representation holds on every interval
$[0,NT]$, $N\in\mathbb N^+$.

\section{Onsager--Machlup Functionals for $n$-Dimensional Time-Varying Fractional Noise}

In this section, we derive the Onsager--Machlup functional associated with
\eqref{fir}. The proof is divided into two parts. In the first part, we use a
Girsanov transformation to rewrite the probability ratio
\begin{equation*}
\frac{
\mathbb P\bigl(\|X_\cdot-\phi_\cdot\|\leq \varepsilon\bigr)
}{
\mathbb P\bigl(
\|\int_0^\cdot \sigma_s\,dB_s^H\|\leq \varepsilon
\bigr)
}
\end{equation*}
as a conditional expectation. In the second part, we estimate this conditional
expectation and complete the derivation of the Onsager--Machlup functional.

\subsection{Derivation of the Onsager--Machlup functional}

In this subsection, we derive the Onsager--Machlup functional for
\(1/4<H<1\).

\begin{theorem}
\label{thm:OM-n-dimensional}
Let \(X\) be the solution of \eqref{fir}, and assume that the coefficients
satisfy Assumption~\ref{ass:A}. For any admissible path \(\phi\) such that
\[
    \phi\in x_0+\mathcal H_H^\sigma\bigl([0,T];\mathbb R^n\bigr),
\]
we denote by \(\dot\phi\) the element of \(L^2([0,T];\mathbb R^n)\) satisfying $
    \phi-x_0=K_H^\sigma(\dot\phi).
$
Then, for \(1/4<H<1\), the Onsager--Machlup functional of \(X\) with respect
to the H\"older norm \(\|\cdot\|_\beta\), where
\[
    \max\{H-1/2,0\}<\beta<H-1/4,
\]
is given by
\begin{equation}
\label{eq:OM-functional-n-dimensional}
\begin{aligned}
J(\phi)
&=
\frac12\int_0^T
\left|
(K_H^\sigma)^{-1}
\left(
    \phi_\cdot-x_0-\int_0^\cdot b_v(\phi_v)\,dv
\right)(s)
\right|^2\,ds  \\
&\quad
+\frac{d_H}{2}\int_0^T \nabla\cdot b_s(\phi_s)\,ds ,
\end{aligned}
\end{equation}
where
\[
    d_H
    =
    \sqrt{
    \frac{
    2H\,\Gamma\!\left(H+\frac12\right)
    \Gamma\!\left(\frac32-H\right)
    }{
    \Gamma(2-2H)
    }} .
\]
\end{theorem}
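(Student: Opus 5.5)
The plan follows the Moret--Nualart strategy \cite{Nualart}, adapted to the weighted operator $K_H^\sigma$ and to $n$ dimensions. The argument has two parts: a Girsanov change of measure, then term-by-term control of the resulting exponent on the small tube.

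\textbf{Step 1: Girsanov transformation.} Set $\psi:=\phi-x_0\in\mathcal H_H^\sigma$ and $Y:=\int_0^\cdot\sigma_s\,dB^H_s=K_H^\sigma$ applied to the white noise. Under the shifted measure, $X-\phi$ has the law of $Y$. Define
\[
u_s:=(K_H^\sigma)^{-1}\Bigl(\psi-\int_0^\cdot b_v(Y_v+\phi_v)\,dv\Bigr)(s).
\]
Then
\[
\frac{\mathbb P(\|X-\phi\|_\beta\le\varepsilon)}{\mathbb P(\|Y\|_\beta\le\varepsilon)}
=\mathbb E\Bigl[\exp\Bigl(-\int_0^T u_s\cdot dW_s-\tfrac12\int_0^T|u_s|^2\,ds\Bigr)\,\Big|\,\|Y\|_\beta\le\varepsilon\Bigr].
\]
Admissibility of $u$ is checked using $b\in C_b^{1,3}$, formula \eqref{eq:Ksigma_inverse_explicit}, and the Novikov-type bound available on the tube. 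Next, Taylor-expand $b_v(\phi_v+Y_v)$ around $\phi_v$ and use linearity of $(K_H^\sigma)^{-1}$. This gives the decomposition
\[
u=\dot\phi-(K_H^\sigma)^{-1}\Bigl(\int_0^\cdot b_v(\phi_v)dv\Bigr)-(K_H^\sigma)^{-1}\Bigl(\int_0^\cdot \nabla b_v(\phi_v)Y_v\,dv\Bigr)-R(Y),
\]
where the remainder $R$ is quadratic in $Y$. The exponent then splits into four kinds of terms:
\begin{itemize}[label={}]
\item (a) the deterministic term $\tfrac12\int_0^T|(K_H^\sigma)^{-1}(\phi-x_0-\int b(\phi))|^2$;
\item (b) a linear stochastic integral $\int g\cdot dW$ with deterministic $g$;
\item (c) the double-type term $\int_0^T\bigl[(K_H^\sigma)^{-1}\int_0^\cdot\nabla b_v(\phi_v)Y_v\,dv\bigr]\cdot dW_s$;
\item (d) remainders that are $O(\varepsilon)$ on the tube.
\end{itemize}

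\textbf{Step 2: Control on the tube.} The standard criterion (Ikeda--Watanabe) reduces the claim to showing, for each term $Z_i$ and every $c\in\mathbb R$,
\[
\limsup_{\varepsilon\to0}\mathbb E\bigl[e^{cZ_i}\,\big|\,\|Y\|_\beta\le\varepsilon\bigr]\le1,
\]
or the same with the limit $e^{c\,\cdot\,\mathrm{const}}$ for term (c). The terms are handled as follows.
\begin{itemize}[label={}]
\item Term (a) is deterministic.
\item Term (b): $e^{cZ}$ with $Z$ linear is handled via symmetry. Since $\cosh$ of a linear functional is symmetric and convex (up to subtracting 1), Lemma~\ref{conditional-expectation} bounds the conditional moment by the unconditional one. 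Combined with the fact that the tube forces $Z\to0$ in probability, this gives limit $1$.
\item Terms (d): these vanish because $\|Y\|_\infty\le C\varepsilon$ and $\beta<H$, so $Y$ is small in $C^\beta$. Young estimates (Lemma~\ref{lem:young-integration}) control $(K_H^\sigma)^{-1}$ of smooth-in-time integrals of $Y$.
\end{itemize}

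\textbf{Step 3: Trace term (main obstacle).} Term (c) produces the divergence. Write it as a multiple Wiener--It\^o integral of order two plus a trace term, using $Y=K_H^\sigma\dot W$.
\begin{itemize}[label={}]
\item The diagonal blocks $i=j$ reproduce the one-dimensional computation of \cite{Nualart} with weight $\sigma^i$. The trace of the kernel equals $\frac{d_H}{2}\int_0^T\partial_ib^i_s(\phi_s)\,ds$; the weights $\sigma^i$ cancel because $(K_H^\sigma)^{-1}$ contains $\sigma^{-1}$ while $Y$ contains $\sigma$, evaluated at the same time. Summing over $i$ gives $\frac{d_H}{2}\int_0^T\nabla\cdot b_s(\phi_s)\,ds$.
\item The off-diagonal couplings $\partial_j b^i$, $i\ne j$, are the genuinely new multidimensional issue. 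They are double stochastic integrals in independent components with no trace. To show their conditional exponential moments tend to $1$, I will approximate them by finite sums of products of linear functionals of $W^i$ and $W^j$ (eigen-expansion of the Hilbert--Schmidt kernel), each of which is a symmetric functional. The key input is that the GCI (Corollary~\ref{gauss set}) allows conditioning on the product tube $\bigcap_i\{\|Y^i\|_\beta\le\varepsilon\}$ without losing independence bounds, so the one-dimensional small-ball arguments apply coordinatewise.
\end{itemize}
The restriction $\beta<H-1/4$ (which needs $H>1/4$) is exactly what makes the Hilbert--Schmidt kernel of the second chaos term satisfy the Carleman-type trace condition used in \cite{Nualart}. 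I would verify this via Theorem~\ref{volterra} applied to the weakly singular kernel $s^{\pm\alpha}(t-s)^{\alpha-1}$, and I expect this trace-class/Hilbert--Schmidt bookkeeping to be the most delicate step.

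\textbf{Step 4: Conclusion.} Combine the four terms using H\"older's inequality for conditional expectations, with arbitrary exponents $c$. This yields the limit $\exp(-J(\phi))$ with $J$ given by \eqref{eq:OM-functional-n-dimensional}.
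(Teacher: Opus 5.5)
Your overall architecture (Girsanov, Taylor expansion of $b$ around $\phi$, the Ikeda--Watanabe reduction, and the one-dimensional trace computation of \cite{Nualart} for the diagonal blocks) matches the paper's. The paper defers all of that to \cite{Nualart} and writes out only the off-diagonal couplings. That new step, however, does not work by the mechanism you describe. Lemma~\ref{conditional-expectation} applies only to \emph{convex} symmetric functionals. Your factors $\exp(c\lambda_m\xi_m\zeta_m)$, with $\xi_m=\int e_m\,dW^i$ and $\zeta_m=\int f_m\,dW^j$, are symmetric but not convex. Conditioning on the product tube only preserves independence, which needs no GCI at all.

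The idea you are missing is to condition on $\omega_j$ first. The $(i,j)$ coupling is then $\int_0^T h_s(\omega_j)\,dW^i_s$, where $h(\omega_j)$ is the $i$-th component of $(K_H^\sigma)^{-1}\bigl(\int_0^\cdot \partial_j b_v(\phi_v)\,Y^j_v\,dv\bigr)$. This is a fixed bounded operator applied to the path $Y^j$, so $\|h(\omega_j)\|_{L^2}\le C\varepsilon$ \emph{pathwise} on $A_j$; for $H>1/2$ this bound uses $\beta>H-1/2$. Given $\omega_j$, the coupling is a centered Gaussian in $W^i$. Jensen together with the symmetry of $A_i$ gives the lower bound $1$. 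Using $\cosh$ and Lemma~\ref{conditional-expectation} gives the upper bound $\exp(\tfrac12\|h(\omega_j)\|_{L^2}^2)\le\exp(C^2\varepsilon^2/2)$. Averaging over $A_j$ and squeezing gives the limit $1$. No eigen-expansion is needed. Your expansion could only be closed with this same fact, since $\zeta_m=\lambda_m^{-1}\langle e_m,h(\omega_j)\rangle_{L^2}$. Comparison with unconditional moments alone cannot control $\mathbb E[\exp(K\zeta_m^2)\mid A_j]$ for the arbitrarily large $K$ that the criterion demands.

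Second, Step~2(d) is wrong for the stochastic-integral part of the remainder. On the tube the integrand satisfies $R(Y)=O(\varepsilon^2)$, yet $\int_0^T R(Y)_s\cdot dW_s$ is not pathwise small there. Young estimates only handle the remainder terms inside $\tfrac12\int_0^T|u_s|^2\,ds$. The argument actually used in \cite{Nualart} is the exponential martingale inequality $\mathbb P\bigl(|\int_0^T R(Y)_s\cdot dW_s|>\delta,\ \int_0^T|R(Y)_s|^2\,ds\le C\varepsilon^4\bigr)\le 2e^{-\delta^2/(2C\varepsilon^4)}$. This is set against the small-ball lower bound $\mathbb P(\|Y\|_\beta\le\varepsilon)\ge\exp(-C\varepsilon^{-1/(H-\beta)})$. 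The resulting conditional probability tends to zero as soon as $1/(H-\beta)<4$. That is where $\beta<H-1/4$, and hence $H>1/4$, is used. Your attribution of this restriction to a Carleman-type trace condition cannot be right, because the kernel of term~(c) is deterministic and does not depend on $\beta$ at all.
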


\begin{proof}
The one-dimensional part of the argument follows the same strategy as in
\cite{Nualart}. Therefore, we only present the part of the proof which is
specific to the multidimensional setting, namely the treatment of the coupling
terms arising from the Taylor expansion of the stochastic integral.

We first consider the case \(1/4<H<1/2\). 
The key term to be evaluated is the conditional expectation
\begin{equation*}
\mathbb E
\left[
\exp\left\{
\int_0^T
s^{-\alpha}I_{0^+}^{\alpha}s^\alpha
\Bigl[
    \sigma_s^{-1}
    b_s\Bigl(
        \phi_s+\int_0^s\sigma_u\,dB_u^H
    \Bigr)
\Bigr]
\,dW_s
\right\}
\,\middle|\,
\bigcap_{i=1}^n A_i
\right],
\end{equation*}
where
\[
    A_i
    :=
    \left\{
    \left\|
    \int_0^\cdot \sigma_u^i\,dB_u^{H,i}
    \right\|_\beta
    \leq \varepsilon
    \right\},
    \qquad i=1,\dots,n .
\]
Here, for notational simplicity, we write \(\sigma^i\) for the \(i\)-th
diagonal component of \(\sigma\). In the non-diagonal case, the same argument
applies after working componentwise with the corresponding independent
Gaussian coordinates.

Expanding the drift term around \(\phi\), the first-order part is
\begin{equation*}
\sum_{i,j=1}^n
\int_0^T
s^{-\alpha}I_{0^+}^{\alpha}s^\alpha
\left[
    (\sigma_s^i)^{-1}
    \partial_j b_s^i(\phi_s)
    \left(
        \int_0^s \sigma_u^j\,dB_u^{H,j}
    \right)
\right]
\,dW_s^i .
\end{equation*}
We distinguish between the diagonal terms \(i=j\) and the off-diagonal terms
\(i\neq j\).

For \(i=j\), by the independence of different components, the conditional
expectation with respect to \(\bigcap_{k=1}^n A_k\) reduces to the conditional
expectation with respect to \(A_i\). Hence
\begin{align*}
&\mathbb E
\left[
\exp\left\{
\int_0^T
s^{-\alpha}I_{0^+}^{\alpha}s^\alpha
\left[
    (\sigma_s^i)^{-1}
    \partial_i b_s^i(\phi_s)
    \left(
        \int_0^s \sigma_u^i\,dB_u^{H,i}
    \right)
\right]
\,dW_s^i
\right\}
\,\middle|\,
\bigcap_{k=1}^n A_k
\right]
\nonumber \\
&\quad =
\mathbb E
\left[
\exp\left\{
\int_0^T
s^{-\alpha}I_{0^+}^{\alpha}s^\alpha
\left[
    (\sigma_s^i)^{-1}
    \partial_i b_s^i(\phi_s)
    \left(
        \int_0^s \sigma_u^i\,dB_u^{H,i}
    \right)
\right]
\,dW_s^i
\right\}
\,\middle|\,
A_i
\right].
\end{align*}
By the trace estimate in \cite{Nualart}, we obtain
\begin{align*}
&\lim_{\varepsilon\to0}
\mathbb E
\left[
\exp\left\{
\int_0^T
s^{-\alpha}I_{0^+}^{\alpha}s^\alpha
\left[
    (\sigma_s^i)^{-1}
    \partial_i b_s^i(\phi_s)
    \left(
        \int_0^s \sigma_u^i\,dB_u^{H,i}
    \right)
\right]
\,dW_s^i
\right\}
\,\middle|\,
A_i
\right]
\nonumber \\
&\quad =
\exp\left\{
-\frac{d_H}{2}
\int_0^T \partial_i b_s^i(\phi_s)\,ds
\right\}.
\end{align*}

It remains to show that the off-diagonal terms do not contribute to the
limiting Onsager--Machlup functional. Fix \(i\neq j\). Then the corresponding
conditional expectation is
\begin{align*}
&\mathbb E
\left[
\exp\left\{
\int_0^T
s^{-\alpha}I_{0^+}^{\alpha}s^\alpha
\left[
    (\sigma_s^i)^{-1}
    \partial_j b_s^i(\phi_s)
    \left(
        \int_0^s \sigma_u^j\,dB_u^{H,j}
    \right)
\right]
\,dW_s^i
\right\}
\,\middle|\,
\bigcap_{k=1}^n A_k
\right]
\nonumber \\
&\quad =
\mathbb E
\left[
\exp\left\{
\int_0^T
s^{-\alpha}I_{0^+}^{\alpha}s^\alpha
\left[
    (\sigma_s^i)^{-1}
    \partial_j b_s^i(\phi_s)
    \left(
        \int_0^s \sigma_u^j\,dB_u^{H,j}
    \right)
\right]
\,dW_s^i
\right\}
\,\middle|\,
A_i,A_j
\right].
\end{align*}
For fixed \(\omega_j\), define
\[
    h_s(\omega_j)
    :=
    s^{-\alpha}I_{0^+}^{\alpha}s^\alpha
    \left[
        (\sigma_s^i)^{-1}
        \partial_j b_s^i(\phi_s)
        \left(
            \int_0^s \sigma_u^j\,dB_u^{H,j}(\omega_j)
        \right)
    \right].
\]
Then, using Fubini's theorem and the independence of the \(i\)-th and
\(j\)-th components, we can write
\begin{align*}
&\mathbb E
\left[
\exp\left\{
    \int_0^T h_s(\omega_j)\,dW_s^i
\right\}
\,\middle|\,
A_i,A_j
\right]
\nonumber \\
&\quad =
\mathbb E_j
\left[
\mathbb E_i
\left(
\exp\left\{
    \int_0^T h_s(\omega_j)\,dW_s^i(\omega_i)
\right\}
\,\middle|\,
A_i
\right)
\,\middle|\,
A_j
\right].
\end{align*}

We now estimate the inner conditional expectation uniformly for
\(\omega_j\in A_j\). Since
\[
    \int_0^T h_s(\omega_j)\,dW_s^i
\]
is a centered Gaussian random variable with respect to the \(i\)-th Gaussian
coordinate, and \(A_i\) is symmetric and convex, Jensen's inequality gives
\begin{align*}
\mathbb E_i
\left(
\exp\left\{
    \int_0^T h_s(\omega_j)\,dW_s^i
\right\}
\,\middle|\,
A_i
\right)
&\geq
\exp\left\{
\mathbb E_i
\left(
    \int_0^T h_s(\omega_j)\,dW_s^i
    \,\middle|\,
    A_i
\right)
\right\}
\nonumber \\
&=1 .
\end{align*}
For the upper bound, we apply Lemma~\ref{conditional-expectation} to obtain
\begin{align*}
\mathbb E_i
\left(
\exp\left\{
    \int_0^T h_s(\omega_j)\,dW_s^i
\right\}
\,\middle|\,
A_i
\right)
&\leq
\mathbb E_i
\exp\left\{
    \int_0^T h_s(\omega_j)\,dW_s^i
\right\}
\nonumber \\
&=
\exp\left\{
    \frac12\int_0^T |h_s(\omega_j)|^2\,ds
\right\}.
\end{align*}
On the event \(A_j\), the small-ball condition implies
\[
    \left\|
    \int_0^\cdot \sigma_u^j\,dB_u^{H,j}
    \right\|_\beta
    \leq \varepsilon .
\]
By the boundedness of \((\sigma^i)^{-1}\partial_j b^i\) and the mapping
property of \(s^{-\alpha}I_{0^+}^{\alpha}s^\alpha\), there exists a constant
\(C>0\), independent of \(\varepsilon\), such that
\[
    \|h(\omega_j)\|_{L^2([0,T])}
    \leq C\varepsilon ,
    \qquad \omega_j\in A_j .
\]
Therefore,
\[
    1
    \leq
    \mathbb E_i
    \left(
    \exp\left\{
        \int_0^T h_s(\omega_j)\,dW_s^i
    \right\}
    \,\middle|\,
    A_i
    \right)
    \leq
    \exp\left\{
        \frac{C^2\varepsilon^2}{2}
    \right\},
    \qquad \omega_j\in A_j .
\]
Taking the outer conditional expectation with respect to \(A_j\) preserves
these bounds. Hence, by the squeeze theorem,
\[
\lim_{\varepsilon\to0}
\mathbb E
\left[
\exp\left\{
    \int_0^T h_s(\omega_j)\,dW_s^i
\right\}
\,\middle|\,
A_i,A_j
\right]
=
1 .
\]
Thus all off-diagonal terms vanish in the limit.

Combining the diagonal limits over \(i=1,\dots,n\), we obtain
\[
    \prod_{i=1}^n
    \exp\left\{
    -\frac{d_H}{2}
    \int_0^T \partial_i b_s^i(\phi_s)\,ds
    \right\}
    =
    \exp\left\{
    -\frac{d_H}{2}
    \int_0^T \nabla\cdot b_s(\phi_s)\,ds
    \right\}.
\]
Together with the deterministic Cameron--Martin contribution coming from the
Girsanov density, this yields
\[
\begin{aligned}
J(\phi)
&=
\frac12\int_0^T
\left|
(K_H^\sigma)^{-1}
\left(
    \phi_\cdot-x_0-\int_0^\cdot b_v(\phi_v)\,dv
\right)(s)
\right|^2\,ds  \\
&\quad
+\frac{d_H}{2}\int_0^T \nabla\cdot b_s(\phi_s)\,ds .
\end{aligned}
\]

The case \(1/2\leq H<1\) is handled in the same way. The only difference lies in the
fractional operator appearing in the representation of \((K_H^\sigma)^{-1}\).
The diagonal terms again yield the trace contribution, whereas the
off-diagonal terms vanish by Lemma~\ref{conditional-expectation}. This
completes the proof.
\end{proof}

\begin{remark}[Small-noise decomposition and large deviations]
\label{rem:OM-LDP-relation}
To make the dependence on the noise intensity explicit, we replace
$\sigma_t$ by $\sqrt{\epsilon}\,\sigma_t$ and consider
\begin{equation}
\label{sde}
    dX_t^\epsilon
    =
    b_t(X_t^\epsilon)\,dt
    +
    \sqrt{\epsilon}\,\sigma_t\,dB_t^H,
    \qquad
    X_0^\epsilon=x_0,
\end{equation}
where $\epsilon>0$ and the coefficients satisfy
Assumption~\ref{ass:A}.

The associated Onsager--Machlup functional can be decomposed into a
fractional kinetic term and a potential-type correction. Define
\begin{equation}
\label{eq:fractional-kinetic-term}
    I_H(\phi)
    :=
    \frac12
    \int_0^T
    \left|
        (K_H^\sigma)^{-1}
        \left(
            \phi_\cdot-x_0
            -
            \int_0^\cdot b_s(\phi_s)\,ds
        \right)(t)
    \right|^2\,dt
\end{equation}
and
\begin{equation}
\label{eq:potential-term}
    V(\phi)
    :=
    -\frac{d_H}{2}
    \int_0^T
    \nabla\!\cdot b_t(\phi_t)\,dt.
\end{equation}
Then
\begin{equation}
\label{eq:OM-kinetic-potential-decomposition}
    J_\epsilon(\phi)
    =
    \frac{1}{\epsilon}I_H(\phi)-V(\phi).
\end{equation}

The kinetic term $I_H$ coincides with the rate functional in the
corresponding Freidlin--Wentzell large deviation principle. Informally,
on the logarithmic scale,
\begin{equation*}
    \mathbb P\bigl(X^\epsilon\approx\phi\bigr)
    \approx
    \exp\left\{
        -\frac{I_H(\phi)}{\epsilon}
    \right\}.
\end{equation*}
Thus, $I_H(\phi)$ measures the leading exponential cost of observing a
path close to $\phi$ in the small-noise limit. In particular,
\begin{equation*}
    I_H(\phi)\geq0,
    \qquad
    I_H(\phi)=0
    \quad\Longleftrightarrow\quad
    \phi=\bar X,
\end{equation*}
where $\bar X$ is the solution of the noise-free system
\eqref{nnoise} with initial condition $x_0$.

Consequently, the term $\epsilon^{-1}I_H$ dominates the potential
correction $V$ as $\epsilon\to0$, suggesting that global most probable
paths should approach $\bar X$. The persistence properties and
quantitative convergence rates are established below by direct
variational estimates.
\end{remark}

\section{Most Probable Path Preservation for Small Noise}
\label{sec:most probable transition path}

In this section, we study the persistence of most probable paths of
\eqref{sde} in the small-noise regime and establish quantitative
convergence rates of global most probable paths toward the
corresponding trajectories of the noise-free system.

    According to \eqref{eq:OM-functional-n-dimensional}, the
Onsager--Machlup functional associated with \eqref{sde} is
\begin{equation}
\label{om-functional}
\begin{aligned}
    J_\epsilon(\phi)
    &=
    \frac{1}{2\epsilon}
    \int_0^T
    \left|
        (K_H^\sigma)^{-1}
        \left(
            \phi_\cdot-x_0
            -
            \int_0^\cdot b_s(\phi_s)\,ds
        \right)(t)
    \right|^2\,dt
    \\
    &\quad+
    \frac{d_H}{2}
    \int_0^T
    \nabla\!\cdot b_t(\phi_t)\,dt,
    \qquad
    \phi\in\mathcal H_{H;x_0,x_T}^{\sigma,T}.
\end{aligned}
\end{equation}

For prescribed endpoints $x_0,x_T\in\mathbb R^n$, define the
endpoint-constrained admissible class by
\begin{equation*}
    \mathcal H_{H;x_0,x_T}^{\sigma,T}
    :=
    \left\{
        \phi:
        \phi_0=x_0,\;
        \phi_T=x_T,\;
        \phi_\cdot-x_0
        \in
        \mathcal H_H^\sigma([0,T];\mathbb R^n)
    \right\}.
\end{equation*}
The most probable transition paths from $x_0$ to $x_T$ are the
minimizers of the Onsager--Machlup functional
\eqref{om-functional} over
$\mathcal H_{H;x_0,x_T}^{\sigma,T}$.

We next introduce two classes of most probable paths.

\begin{definition}[Most probable transition path]
\label{def:most-probable-transition-path}
A path
\begin{equation*}
    \phi^*
    \in
    \mathcal H_{H;x_0,x_T}^{\sigma,T}
\end{equation*}
is called a most probable transition path of \eqref{sde} from $x_0$ to
$x_T$ over $[0,T]$ if
\begin{equation*}
    J_\epsilon(\phi^*)
    =
    \inf_{\phi\in\mathcal H_{H;x_0,x_T}^{\sigma,T}}
    J_\epsilon(\phi).
\end{equation*}
\end{definition}

\begin{definition}[Most probable evolution path]
\label{def:most-probable-evolution-path}
A path
\begin{equation*}
    \phi^*
    \in
    x_0+\mathcal H_H^\sigma
\end{equation*}
is called a most probable evolution path of \eqref{sde} starting from
$x_0$ over $[0,T]$ if
\begin{equation*}
    J_\epsilon(\phi^*)
    =
    \inf_{
        \phi\in
        x_0+\mathcal H_H^\sigma
    }
    J_\epsilon(\phi).
\end{equation*}
\end{definition}

We now consider the corresponding noise-free system
\begin{equation}
\label{nnoise}
    \bar X_t'
    =
    b_t(\bar X_t),
    \qquad
    \bar X_0=x_0,
\end{equation}
and, for the fixed-endpoint problem, assume that
\begin{equation*}
    \bar X_T=x_T.
\end{equation*}

\begin{definition}[Persistence of a most probable path]
\label{def:persistence-most-probable-path}
The deterministic trajectory $\bar X$ is said to persist as a most
probable transition path or evolution path of \eqref{sde} at noise
intensity $\epsilon$ if
\begin{equation*}
    J_\epsilon(\bar X)
    =
    \inf_{\phi\in\mathcal A}J_\epsilon(\phi),
\end{equation*}
where
\begin{equation*}
    \mathcal A
    =
    \mathcal H_{H;x_0,x_T}^{\sigma,T}
    \quad\text{or}\quad
    x_0+\mathcal H_H^\sigma,
\end{equation*}
respectively.
\end{definition}

Since a most probable transition path is subject to the additional
terminal constraint,
\begin{equation*}
    \mathcal H_{H;x_0,x_T}^{\sigma,T}
    \subset
    x_0+\mathcal H_H^\sigma.
\end{equation*}
Consequently, persistence as a most probable evolution path implies
persistence as a most probable transition path, whereas the converse
need not hold.

\subsection{First and second variations}

To characterize most probable paths, we study the variations of
\eqref{om-functional} over the two admissible path spaces. We first
consider the fixed-endpoint problem, whose admissible perturbation
space is
\begin{equation*}
    \mathcal H_{H,0}^\sigma
    :=
    \left\{
        \eta\in
        \mathcal H_H^\sigma\bigl([0,T];\mathbb R^n\bigr):
        \eta_0=\eta_T=0
    \right\}.
\end{equation*}

Let $\eta\in\mathcal H_{H,0}^\sigma$. Since $\bar X$ satisfies
\eqref{nnoise}, Taylor expansion of the drift gives
\begin{align*}
&\bar X_\cdot+\lambda\eta_\cdot-x_0
-\int_0^\cdot b_s(\bar X_s+\lambda\eta_s)\,ds
\\
&\quad=
\lambda
\left(
    \eta_\cdot
    -
    \int_0^\cdot
    \nabla b_s(\bar X_s)\eta_s\,ds
\right)
-
\frac{\lambda^2}{2}
\int_0^\cdot
\eta_s^T\nabla^2b_s(\bar X_s)\eta_s\,ds
+
o(\lambda^2).
\end{align*}
Consequently,
\begin{equation*}
\begin{aligned}
I_H(\bar X+\lambda\eta)
=
\frac{\lambda^2}{2}
\int_0^T
\left|
(K_H^\sigma)^{-1}
\left(
    \eta_\cdot
    -
    \int_0^\cdot
    \nabla b_s(\bar X_s)\eta_s\,ds
\right)(t)
\right|^2\,dt
+
o(\lambda^2).
\end{aligned}
\end{equation*}
Similarly,
\begin{equation*}
\begin{aligned}
V(\bar X+\lambda\eta)
={}&
V(\bar X)
-
\frac{d_H\lambda}{2}
\int_0^T
\nabla(\nabla\!\cdot b_t)(\bar X_t)^T\eta_t\,dt
\\
&-
\frac{d_H\lambda^2}{4}
\int_0^T
\eta_t^T
\nabla^2(\nabla\!\cdot b_t)(\bar X_t)
\eta_t\,dt
+
o(\lambda^2).
\end{aligned}
\end{equation*}
It follows that
\begin{equation}
\label{eq:J-taylor-variation}
    J_\epsilon(\bar X+\eta)
    =
    J_\epsilon(\bar X)
    +
    \delta J_\epsilon(\bar X)[\eta]
    +
    \frac12
    \delta^2J_\epsilon(\bar X)[\eta,\eta]
    +
    o\left(
        \|\eta\|_{\mathcal H_H^\sigma}^2
    \right),
\end{equation}
where
\begin{equation}
\label{eq:first-variation-barX}
    \delta J_\epsilon(\bar X)[\eta]
    =
    \frac{d_H}{2}
    \int_0^T
    \nabla(\nabla\!\cdot b_t)(\bar X_t)^T\eta_t\,dt
\end{equation}
and
\begin{equation}
\label{eq:second-variation-barX}
\begin{aligned}
\delta^2J_\epsilon(\bar X)[\eta,\eta]
={}&
\frac{1}{\epsilon}
\int_0^T
\left|
(K_H^\sigma)^{-1}
\left(
    \eta_\cdot
    -
    \int_0^\cdot
    \nabla b_s(\bar X_s)\eta_s\,ds
\right)(t)
\right|^2\,dt
\\
&+
\frac{d_H}{2}
\int_0^T
\eta_t^T
\nabla^2(\nabla\!\cdot b_t)(\bar X_t)
\eta_t\,dt.
\end{aligned}
\end{equation}

For the most probable evolution-path problem, the admissible
perturbations belong to
$\mathcal H_H^\sigma([0,T];\mathbb R^n)$ and are not required to
vanish at $T$. Since the kinetic residual vanishes at $\bar X$, the
same calculation applies without additional boundary terms.
Therefore, \eqref{eq:first-variation-barX} and
\eqref{eq:second-variation-barX} remain valid, but on the larger
free-endpoint perturbation space.

A necessary condition for $\bar X$ to be a local minimizer in either
problem is
\begin{equation}
\label{cond1}
    \nabla(\nabla\!\cdot b_t)(\bar X_t)=0,
    \qquad
    0\leq t\leq T.
\end{equation}
Under \eqref{cond1},
\begin{equation}
\label{first-variation}
    \delta J_\epsilon(\bar X)[\eta]=0
\end{equation}
for every admissible perturbation in the corresponding path space.
Local minimality may then be established by proving the coercivity of
\eqref{eq:second-variation-barX} on that perturbation space.

\subsection{Persistence and loss of local minimality of the deterministic path}

We first give a coercivity criterion for the local minimality of the
deterministic path.

\begin{theorem}
\label{variation}
Let $1/4<H<1$, and let $\bar X$ solve \eqref{nnoise}. Suppose that
\eqref{cond1} holds and that there exists $c_0>0$ such that
\begin{equation}
\label{cond2}
    \delta^2J_\epsilon(\bar X)[\eta,\eta]
    \geq
    c_0
    \|\eta\|_{\mathcal H_H^\sigma([0,T];\mathbb R^n)}^2.
\end{equation}
If \eqref{cond2} holds for every
$\eta\in\mathcal H_{H,0}^\sigma([0,T];\mathbb R^n)$, then $\bar X$ is
a strict local minimizer under the fixed-endpoint constraint. If it
holds for every
$\eta\in\mathcal H_H^\sigma([0,T];\mathbb R^n)$, then $\bar X$ is a
strict local minimizer in the free-endpoint path space.
\end{theorem}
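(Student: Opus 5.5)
The plan is to use the second-order Taylor expansion \eqref{eq:J-taylor-variation}, with the remainder made uniform in the $\mathcal H_H^\sigma$-norm. Fix $\phi=\bar X+\eta$ with $\eta$ in the relevant perturbation space: $\mathcal H_{H,0}^\sigma$ for the fixed-endpoint problem, $\mathcal H_H^\sigma$ for the free-endpoint one. Because of \eqref{cond1}, the first variation \eqref{first-variation} vanishes, so
\begin{equation*}
J_\epsilon(\bar X+\eta)-J_\epsilon(\bar X)
=\tfrac12\delta^2J_\epsilon(\bar X)[\eta,\eta]+R(\eta).
\end{equation*}
By \eqref{cond2} this is at least $\tfrac{c_0}{2}\|\eta\|^2_{\mathcal H_H^\sigma}+R(\eta)$. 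It therefore suffices to show that $|R(\eta)|\le \omega(\|\eta\|_{\mathcal H_H^\sigma})\|\eta\|^2_{\mathcal H_H^\sigma}$ for some modulus $\omega(r)\to0$ as $r\to0$. Then for $0<\|\eta\|_{\mathcal H_H^\sigma}<r_0$, with $\omega(r_0)<c_0/4$, we get $J_\epsilon(\bar X+\eta)-J_\epsilon(\bar X)\ge \tfrac{c_0}{4}\|\eta\|^2>0$, which is strict local minimality. The two endpoint settings are handled by the same argument, since nothing in the estimate uses $\eta_T=0$.

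To control $R$, I will treat the kinetic and potential parts separately, using the embedding $\mathcal H_H^\sigma\hookrightarrow C_0^\beta$, in particular $\|\eta\|_\infty\le C\|\eta\|_{\mathcal H_H^\sigma}$. For the potential part $V$, Taylor's theorem with integral remainder applies to $\nabla\cdot b_t\in C^2_b$, whose second derivatives are uniformly continuous because $b\in C_b^{1,3}$. The third-order remainder is then bounded by $T\,\rho(\|\eta\|_\infty)\|\eta\|_\infty^2$, where $\rho$ is the modulus of continuity of $\nabla^2(\nabla\cdot b)$, and this is of the required form.

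For the kinetic part, write $F(\eta):=\eta-\int_0^\cdot\nabla b_s(\bar X_s)\eta_s\,ds$ and
\begin{equation*}
Q(\eta):=-\int_0^\cdot\Bigl[b_s(\bar X_s+\eta_s)-b_s(\bar X_s)-\nabla b_s(\bar X_s)\eta_s\Bigr]ds.
\end{equation*}
The kinetic residual at $\bar X+\eta$ equals $F(\eta)+Q(\eta)$. Hence
\begin{equation*}
\epsilon^{-1}I_H(\bar X+\eta)
=\tfrac1{2\epsilon}\|F(\eta)\|^2_{\mathcal H_H^\sigma}
+\epsilon^{-1}\langle F(\eta),Q(\eta)\rangle_{\mathcal H_H^\sigma}
+\tfrac1{2\epsilon}\|Q(\eta)\|^2_{\mathcal H_H^\sigma}.
\end{equation*}
The first term is exactly the kinetic part of $\tfrac12\delta^2J_\epsilon$. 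The deviation from the quadratic form then consists of the cross term minus the second-order piece $-\tfrac12\int_0^\cdot \eta^T\nabla^2 b\,\eta$ that it contains, together with the last term.

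The main obstacle is to bound $\|Q(\eta)\|_{\mathcal H_H^\sigma}$ and similar integrated expressions by the $\mathcal H_H^\sigma$-norm. The key step is a lemma: the map $g\mapsto\int_0^\cdot g_s\,ds$ is bounded from a suitable function space into $\mathcal H_H^\sigma$. For $H\le1/2$ the space $L^2$ suffices, since the integral lies in $H^1_0$, which embeds into $\mathcal H_H$; for $H>1/2$ one needs $C^\gamma$ with $\gamma>H-1/2$. The latter follows from \eqref{eq:Ksigma_inverse_explicit}: $s^\alpha D^\alpha_{0+}(s^{-\alpha}\sigma^{-1}g)$ lies in $L^2$ for Hölder $g$ of order greater than $\alpha$, by \cite[Proposition~2.1]{Decreusefond1999}. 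Applying this lemma with $g_s=b_s(\bar X_s+\eta_s)-b_s(\bar X_s)-\nabla b_s(\bar X_s)\eta_s$ gives the needed bounds. Using $b\in C_b^{1,3}$ together with the Hölder embedding for $\eta$ (choose $\beta>H-1/2$), one has $\|g\|_{C^\gamma}\le C\|\eta\|^2_{C^\beta}$, and $g$ minus its quadratic part is $o(\|\eta\|_{C^\beta}^2)$. The same lemma shows that $F$ is a bounded operator on $\mathcal H_H^\sigma$. Consequently the cross term differs from its quadratic approximation by $o(\|\eta\|^2)$, and $\|Q\|^2=O(\|\eta\|^4)$. This gives the required modulus $\omega$ and completes the argument.
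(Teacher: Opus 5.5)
Your proposal is correct and follows the same route as the paper. Condition \eqref{cond1} kills the first variation, and the expansion \eqref{eq:J-taylor-variation} together with \eqref{cond2} gives $J_\epsilon(\bar X+\eta)-J_\epsilon(\bar X)\geq\frac{c_0}{4}\|\eta\|_{\mathcal H_H^\sigma}^2>0$ for every sufficiently small nonzero $\eta$ in either perturbation space.

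The paper simply takes the uniform $o(\|\eta\|_{\mathcal H_H^\sigma}^2)$ remainder for granted, since it only derives the expansion along rays $\lambda\eta$. Your remainder analysis is therefore a genuine addition.

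One correction: the residual vanishes at $\bar X$, so the cross term $\langle F(\eta),Q(\eta)\rangle_{\mathcal H_H^\sigma}$ is already $O(\|\eta\|_{\mathcal H_H^\sigma}^3)$ once $\|Q(\eta)\|_{\mathcal H_H^\sigma}\leq C\|\eta\|_{\mathcal H_H^\sigma}^2$. There is no second-order piece to subtract, and $\nabla^2 b$ never enters the quadratic form.
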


\begin{proof}
By \eqref{cond1}, the first variation vanishes. Hence,
\eqref{eq:J-taylor-variation} and \eqref{cond2} give
\begin{equation*}
\begin{aligned}
    J_\epsilon(\bar X+\eta)-J_\epsilon(\bar X)
    &=
    \frac12\delta^2J_\epsilon(\bar X)[\eta,\eta]
    +
    o\left(
        \|\eta\|_{\mathcal H_H^\sigma}^2
    \right)
    \\
    &\geq
    \frac{c_0}{2}
    \|\eta\|_{\mathcal H_H^\sigma}^2
    +
    o\left(
        \|\eta\|_{\mathcal H_H^\sigma}^2
    \right).
\end{aligned}
\end{equation*}
The right-hand side is positive for all sufficiently small nonzero
admissible perturbations, proving the claim.
\end{proof}

\begin{remark}
Condition~\eqref{cond1} means that the local volume expansion rate
$\nabla\!\cdot b_t$ is spatially stationary to first order along
$\bar X$. Condition~\eqref{cond2} requires the fractional kinetic
energy to dominate the second-order contribution of the divergence,
thereby ensuring the local stability of $\bar X$.
\end{remark}

To verify \eqref{cond2}, define the linearized fractional operator
\begin{equation}
\label{eq:AH-definition}
    \mathcal A_H\eta(s)
    :=
    (K_H^\sigma)^{-1}
    \left(
        \eta_\cdot
        -
        \int_0^\cdot
        \nabla b_u(\bar X_u)\eta_u\,du
    \right)(s)
\end{equation}
and the quadratic forms
\begin{equation}
\label{eq:kinetic-part}
    \mathcal I_H(\eta)
    :=
    \|\mathcal A_H\eta\|_{L^2(0,T)}^2
\end{equation}
and
\begin{equation}
\label{eq:potential-part}
    \mathcal V(\eta)
    :=
    -
    \int_0^T
    \eta_s^T
    \nabla^2(\nabla\!\cdot b_s)(\bar X_s)
    \eta_s\,ds.
\end{equation}
Then
\begin{equation}
\label{eq:second-variation-decomposition}
    \delta^2J_\epsilon(\bar X)[\eta,\eta]
    =
    \frac{1}{\epsilon}\mathcal I_H(\eta)
    -
    \frac{d_H}{2}\mathcal V(\eta).
\end{equation}

The following lemma shows that $\mathcal I_H$ is equivalent to the
Cameron--Martin norm.

\begin{lemma}
\label{lem:IH-HH-equivalence}
There exist constants $C_1,C_2>0$ such that
\begin{equation}
\label{eq:IH-HH-equivalence}
    C_1
    \|\eta\|_{\mathcal H_H^\sigma(0,T)}^2
    \leq
    \mathcal I_H(\eta)
    \leq
    C_2
    \|\eta\|_{\mathcal H_H^\sigma(0,T)}^2
\end{equation}
for every
$\eta\in\mathcal H_H^\sigma([0,T];\mathbb R^n)$. In particular, the
same estimate holds on
$\mathcal H_{H,0}^\sigma([0,T];\mathbb R^n)$.
\end{lemma}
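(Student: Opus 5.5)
Writing $\mathcal I_H(\eta)=\|(K_H^\sigma)^{-1}(\eta-\mathcal K\eta)\|_{L^2}^2=\|\eta-\mathcal K\eta\|_{\mathcal H_H^\sigma}^2$ with
\begin{equation*}
    (\mathcal K\eta)_t:=\int_0^t \nabla b_u(\bar X_u)\eta_u\,du,
\end{equation*}
the statement reduces to showing that $I-\mathcal K$ is a bounded linear isomorphism of $\mathcal H_H^\sigma([0,T];\mathbb R^n)$ onto itself. Then $C_2=\|I-\mathcal K\|^2$ and $C_1=\|(I-\mathcal K)^{-1}\|^{-2}$. The restriction to $\mathcal H_{H,0}^\sigma$ is immediate, since the inequality is pointwise in $\eta$. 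I will pass to the equivalent setting $u=(K_H^\sigma)^{-1}\eta\in L^2$, so that the question becomes invertibility of $I-\mathcal M$ on $L^2([0,T];\mathbb R^n)$, where $\mathcal M:=(K_H^\sigma)^{-1}\mathcal K K_H^\sigma$.

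\textbf{Step 1 (boundedness of $\mathcal K$).} I will show that $\mathcal K$ maps $\mathcal H_H^\sigma$ boundedly into itself, and in fact compactly. The matrix $A_u:=\nabla b_u(\bar X_u)$ is bounded and $C^1$ in $u$ by Assumption~\ref{ass:A}, since $b\in C_b^{1,3}$ and $\bar X\in C^1$. The function $\mathcal K\eta$ is $C^1$ with derivative $A\eta$, and $\eta\in C^\beta$ via the embedding $\mathcal H_H^\sigma\hookrightarrow C_0^\beta$. The cases differ:
\begin{itemize}
\item For $H\le 1/2$: $\mathcal H_H\supseteq\mathcal H_{1/2}$ continuously, because $K_H^{-1}$ of an $H^1_0$ function is given by \eqref{eq:KH_inverse_H_less_half_smooth} and $I^\alpha_{0^+}$ is bounded on $L^2$ by Theorem~\ref{volterra}. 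So $\|\mathcal K\eta\|_{\mathcal H_H^\sigma}\lesssim\|A\eta\|_{L^2}\lesssim\|\eta\|_\infty\lesssim\|\eta\|_{\mathcal H_H^\sigma}$.
\item For $H>1/2$: I need $s^\alpha D^\alpha_{0^+}(s^{-\alpha}\sigma^{-1}A\eta)\in L^2$. I will use that $\eta\in C^\beta$ with $\beta>\alpha=H-1/2$, which the paper allows by choosing $\beta\in(\alpha,H)$. Every H\"older function of order greater than $\alpha$ has an $\alpha$-derivative \cite[Proposition~2.1]{Decreusefond1999}. The weights $s^{\pm\alpha}$ are handled with the Marchaud representation, using $\eta_0=0$ so that $|\eta_s|\lesssim s^\beta$ near $0$.
\end{itemize}
Compactness follows in both cases because $\mathcal K$ factors through the compact embedding $\mathcal H_H^\sigma\hookrightarrow C^{\beta'}$, with $\beta'<\beta$, or through $C$.

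\textbf{Step 2 (injectivity).} If $\eta=\mathcal K\eta$, then $\eta$ is $C^1$ and solves $\dot\eta=A\eta$ with $\eta_0=0$, so $\eta\equiv0$ by Gr\"onwall.

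\textbf{Step 3 (invertibility).} By the Fredholm alternative (Theorem~\ref{thm:fredholm_alternative}) with $\lambda=1$ and compact $\mathcal K$, $I-\mathcal K$ is boundedly invertible on $\mathcal H_H^\sigma$. If compactness proves delicate, I will instead write $\mathcal M$ on $L^2$ as a Volterra operator with kernel bounded by $C(t-s)^{-\gamma}$, $\gamma<1$. Theorem~\ref{volterra} then gives quasinilpotence, and hence invertibility of $I-\mathcal M$ directly.

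\textbf{Main obstacle.} The hardest part is Step 1 for $H>1/2$: showing rigorously that multiplication by the $C^1$ matrix $\sigma^{-1}A$ followed by integration preserves $\mathcal H_H^\sigma$. This requires a weighted fractional-derivative estimate with the singular weights $s^{\pm\alpha}$ near the origin. I would handle it by splitting the Marchaud integral into the near-diagonal part, controlled by the $C^\beta$ seminorm of $\eta$ and the Lipschitz bound on $A$, and the boundary term $s^{-\alpha}(\cdot)(s)$, controlled by $|\eta_s|\lesssim s^\beta$.
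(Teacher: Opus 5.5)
Your argument is correct, but it is organized differently from the paper's. The paper also passes to $f=(K_H^\sigma)^{-1}\eta$. It then asserts that ``the Volterra structure'' makes $\widetilde{\mathcal K}=(K_H^\sigma)^{-1}\mathcal K K_H^\sigma$ bounded on $L^2$ with $\sigma(\widetilde{\mathcal K})=\{0\}$, which is essentially your fallback via Theorem~\ref{volterra}, and reads off the invertibility of $I-\widetilde{\mathcal K}$.

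You instead stay on $\mathcal H_H^\sigma$, and you actually prove the boundedness that the paper takes for granted. For $H\le1/2$ this comes from $\mathcal H_{1/2}\subset\mathcal H_H$, and for $H>1/2$ from a weighted Marchaud estimate using $\beta>\alpha$. You then upgrade boundedness to compactness through $\mathcal H_H^\sigma\hookrightarrow C^\beta$ and Arzel\`a--Ascoli. A trivial kernel follows from uniqueness for $\dot\eta=A\eta$, $\eta_0=0$, and the Fredholm alternative finishes the proof.

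What your route buys is independence from any kernel bound $|k(t,s)|\le C(t-s)^{-\gamma}$ for the conjugated operator. Such a bound is not immediate here: $K_H$ and $K_H^{-1}$ carry weights $s^{\pm\alpha}$ that are singular at $s=0$, and for $H>1/2$ the conjugate involves a fractional derivative of a product. Your route also justifies the paper's spectral claim. A nonzero spectral value of the compact operator $\mathcal K$ would be an eigenvalue, and $\mathcal K\eta=\lambda\eta$ with $\lambda\neq0$ again forces $\eta=0$ by Gr\"onwall.

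The cost is the extra case analysis for $H>1/2$. You can shorten it by taking $\beta\in(2\alpha,H)$, which is possible because $2H-1<H$. Then $s^{-\alpha}\sigma_s^{-1}A_s\eta_s$ is itself H\"older of order $\beta-\alpha>\alpha$ on $[0,T]$. The criterion of \cite[Proposition~2.1]{Decreusefond1999} then applies directly, without splitting the Marchaud integral.
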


\begin{proof}
Define
\begin{equation*}
    (\mathcal K\eta)(t)
    :=
    \int_0^t
    \nabla b_u(\bar X_u)\eta_u\,du
\end{equation*}
and set
\begin{equation*}
    f:=(K_H^\sigma)^{-1}\eta,
    \qquad
    \widetilde{\mathcal K}
    :=
    (K_H^\sigma)^{-1}\mathcal K K_H^\sigma.
\end{equation*}
Then
\begin{equation*}
    \mathcal I_H(\eta)
    =
    \left\|
        (I-\widetilde{\mathcal K})f
    \right\|_{L^2(0,T)}^2,
    \qquad
    \|\eta\|_{\mathcal H_H^\sigma(0,T)}
    =
    \|f\|_{L^2(0,T)}.
\end{equation*}
The Volterra structure of $K_H^\sigma$ and $\mathcal K$ implies that
$\widetilde{\mathcal K}$ is bounded and
$\sigma(\widetilde{\mathcal K})=\{0\}$. Thus
$I-\widetilde{\mathcal K}$ is boundedly invertible on
$L^2([0,T];\mathbb R^n)$, and
\begin{equation*}
    C_1\|f\|_{L^2(0,T)}^2
    \leq
    \left\|
        (I-\widetilde{\mathcal K})f
    \right\|_{L^2(0,T)}^2
    \leq
    C_2\|f\|_{L^2(0,T)}^2.
\end{equation*}
This proves \eqref{eq:IH-HH-equivalence}.
\end{proof} 

The continuous embedding of the Cameron--Martin space into $L^2$
gives the following estimate.

\begin{lemma}[Cameron--Martin embedding estimate]
\label{lem:HH-poincare}
There exists $\lambda_0>0$ such that
\begin{equation}
\label{eq:HH-poincare}
    \|\eta\|_{\mathcal H_H^\sigma(0,T)}^2
    \geq
    \lambda_0\|\eta\|_{L^2(0,T)}^2
\end{equation}
for every
$\eta\in\mathcal H_H^\sigma([0,T];\mathbb R^n)$.
\end{lemma}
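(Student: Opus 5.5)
The plan is to prove \eqref{eq:HH-poincare} by showing that the map $K_H^\sigma$ is bounded from $L^2([0,T];\mathbb R^n)$ into $L^2([0,T];\mathbb R^n)$ (indeed into $C([0,T];\mathbb R^n)$). Write $\eta=K_H^\sigma f$ with $f=(K_H^\sigma)^{-1}\eta\in L^2$, so that $\|\eta\|_{\mathcal H_H^\sigma}=\|f\|_{L^2}$. If we establish
\begin{equation*}
    \|K_H^\sigma f\|_{L^2(0,T)}\leq C_T\|f\|_{L^2(0,T)},
    \qquad f\in L^2([0,T];\mathbb R^n),
\end{equation*}
then \eqref{eq:HH-poincare} follows with $\lambda_0:=C_T^{-2}$.

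The first step reduces to the unweighted operator. By \eqref{khsigma}, $K_H^\sigma f=\int_0^\cdot\sigma_s\,d(K_Hf)(s)$. Since $\sigma\in C^1$, integrating by parts in the Young sense gives
\begin{equation*}
    K_H^\sigma f(t)=\sigma_t\,(K_Hf)(t)-\int_0^t\dot\sigma_s\,(K_Hf)(s)\,ds .
\end{equation*}
Therefore
\begin{equation*}
    \|K_H^\sigma f\|_\infty\leq\bigl(\max_iM_i+T\|\dot\sigma\|_\infty\bigr)\|K_Hf\|_\infty .
\end{equation*}
This is also consistent with the remark after \eqref{eq:Ksigma_inverse_explicit} that $\mathcal H_H^\sigma$ and $\mathcal H_H$ have equivalent norms.

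The second step bounds $K_H$ from $L^2$ into $C$. For each $t$, Cauchy--Schwarz gives
\begin{equation*}
    |K_Hf(t)|\leq\|K_H(t,\cdot)\|_{L^2(0,t)}\,\|f\|_{L^2}.
\end{equation*}
Here $\int_0^tK_H(t,s)^2\,ds=R_H(t,t)=t^{2H}$, because $K_H$ is the Volterra kernel of fractional Brownian motion. Hence $\|K_Hf\|_\infty\leq T^H\|f\|_{L^2}$, and so $\|K_Hf\|_{L^2}\leq T^{H+1/2}\|f\|_{L^2}$. Combining this with the first step gives $C_T$.

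The only delicate point is justifying the integration by parts for Young integrals. Elements of $\mathcal H_H$ are $\beta$-H\"older for every $\beta<H$, and $\sigma$ is $C^1$, so the Young integral is well defined by Lemma~\ref{lem:young-integration}. The Young integration-by-parts formula then holds because it holds for smooth approximations and passes to the limit. Alternatively, one can bypass this point and invoke directly the continuous embedding $\mathcal H_H^\sigma\hookrightarrow C_0^\beta$ stated in the preliminaries, together with $C_0^\beta\hookrightarrow L^2$ on the bounded interval. Either route yields the constant $\lambda_0$, which depends only on $T$, $H$ and $\sigma$.
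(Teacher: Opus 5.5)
Your proposal is correct and takes the paper's route: the paper writes $\eta=K_H^\sigma f$, invokes the boundedness of $K_H^\sigma$ on $L^2$, and takes $\lambda_0=C^{-2}$. Your integration-by-parts reduction to $K_H$ and your Cauchy--Schwarz bound via $\int_0^t K_H(t,s)^2\,ds=t^{2H}$ supply the justification of that boundedness, which the paper only asserts.
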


\begin{proof}
Writing $\eta=K_H^\sigma f$ and using the boundedness of
$K_H^\sigma$ on $L^2$, we obtain
\begin{equation*}
    \|\eta\|_{L^2(0,T)}
    \leq
    C\|f\|_{L^2(0,T)}
    =
    C\|\eta\|_{\mathcal H_H^\sigma(0,T)}.
\end{equation*}
The result follows with $\lambda_0=C^{-2}$.
\end{proof}

We next control the potential term by the Cameron--Martin norm.

\begin{lemma}
\label{lem:potential-upper-bound}
There exists $\lambda_V\geq0$ such that
\begin{equation}
\label{eq:potential-upper-bound}
    \mathcal V(\eta)
    \leq
    \lambda_V
    \|\eta\|_{\mathcal H_H^\sigma(0,T)}^2
\end{equation}
for every
$\eta\in\mathcal H_H^\sigma([0,T];\mathbb R^n)$.
\end{lemma}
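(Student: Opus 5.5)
The plan is to bound the potential quadratic form pointwise by the Euclidean norm of $\eta$ and then pass to the Cameron--Martin norm through Lemma~\ref{lem:HH-poincare}.

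\emph{Step 1: a uniform bound on the Hessian.} By Assumption~\ref{ass:A}, $b\in C_b^{1,3}$. Hence every entry of $\nabla^2(\nabla\!\cdot b_s)(x)$ is a third-order spatial derivative of a component of $b$. It is therefore bounded uniformly in $(s,x)\in[0,T]\times\mathbb R^n$. Set
\begin{equation*}
    \Lambda
    :=
    \sup_{s\in[0,T]}
    \bigl\|\nabla^2(\nabla\!\cdot b_s)(\bar X_s)\bigr\|_{\mathrm{op}}
    <\infty .
\end{equation*}
Only boundedness along the compact curve $\{\bar X_s\}$ is needed, so even continuity of the third derivatives together with compactness of $[0,T]$ would suffice.

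\emph{Step 2: reduction to the $L^2$ norm.} For every $s$ we have
\begin{equation*}
    -\eta_s^T\nabla^2(\nabla\!\cdot b_s)(\bar X_s)\eta_s
    \leq
    \Lambda|\eta_s|^2 .
\end{equation*}
Integrating over $[0,T]$ gives $\mathcal V(\eta)\leq\Lambda\|\eta\|_{L^2(0,T)}^2$. This pointwise bound is also the point where the one-sided nature of the estimate could be exploited if one wanted a sharper constant: only the negative eigenvalues of the Hessian contribute. It is then natural to take $\Lambda_-:=\sup_s\max\{0,-\lambda_{\min}(s)\}$, where $\lambda_{\min}(s)$ is the smallest eigenvalue of the Hessian at $\bar X_s$. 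This explains why the statement allows $\lambda_V\geq0$, including $\lambda_V=0$ when the Hessian is positive semidefinite along $\bar X$.

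\emph{Step 3: passage to the Cameron--Martin norm.} Apply Lemma~\ref{lem:HH-poincare}, which gives $\|\eta\|_{L^2}^2\leq\lambda_0^{-1}\|\eta\|_{\mathcal H_H^\sigma}^2$. Combining this with Step 2 yields
\begin{equation*}
    \mathcal V(\eta)
    \leq
    \lambda_V\|\eta\|_{\mathcal H_H^\sigma(0,T)}^2,
    \qquad
    \lambda_V:=\Lambda_-/\lambda_0 .
\end{equation*}
Alternatively, one may take $\Lambda/\lambda_0$. The same bound holds on $\mathcal H_{H,0}^\sigma$, since it is a subspace.

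There is no genuine obstacle; the argument is direct. The only point requiring care is that the embedding constant in Lemma~\ref{lem:HH-poincare} depends on $T$, through the $L^2$ operator norm of $K_H^\sigma$. Consequently $\lambda_V$ is a fixed-horizon constant. This is harmless here, but it must be kept in mind for the long-time analysis on $[0,NT]$.
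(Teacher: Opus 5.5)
Your proposal is correct and matches the paper's proof essentially step for step. The paper also bounds $\mathcal V(\eta)\le\mu_V\|\eta\|_{L^2(0,T)}^2$, where $\mu_V=\max\{0,\sup_s\lambda_{\max}(-\nabla^2(\nabla\!\cdot b_s)(\bar X_s))\}$ is exactly your $\Lambda_-$. It then applies Lemma~\ref{lem:HH-poincare} to obtain $\lambda_V=\mu_V/\lambda_0$.
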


\begin{proof}
Set
\begin{equation}
\label{eq:lambda-V-definition}
    \mu_V
    :=
    \max\left\{
        0,\,
        \sup_{s\in[0,T]}
        \lambda_{\max}
        \left(
            -\nabla^2(\nabla\!\cdot b_s)(\bar X_s)
        \right)
    \right\}.
\end{equation}
The regularity of $b$ implies $\mu_V<\infty$, and hence
\begin{equation*}
    \mathcal V(\eta)
    \leq
    \mu_V\|\eta\|_{L^2(0,T)}^2
    \leq
    \frac{\mu_V}{\lambda_0}
    \|\eta\|_{\mathcal H_H^\sigma(0,T)}^2
\end{equation*}
by Lemma~\ref{lem:HH-poincare}. Thus the result holds with
$\lambda_V=\mu_V/\lambda_0$.
\end{proof}

Combining the preceding estimates gives the small-noise persistence
criterion.

\begin{theorem}[Persistence of local minimizers]
\label{thm:local-minimizer-small-noise}
Let $1/4<H<1$, and let $\bar X$ solve \eqref{nnoise}. Suppose that
\eqref{cond1} holds and
\begin{equation}
\label{eq:epsilon-condition-local}
    \frac{C_1}{\epsilon}
    -
    \frac{d_H}{2}\lambda_V
    >
    0.
\end{equation}
Then $\bar X$ is a strict local minimizer of $J_\epsilon$ in both the
fixed-endpoint and free-endpoint path spaces. In particular, it
persists as both a locally most probable transition path and a locally
most probable evolution path.
\end{theorem}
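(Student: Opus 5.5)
The plan is to verify the coercivity hypothesis \eqref{cond2} of Theorem~\ref{variation} on the larger free-endpoint perturbation space $\mathcal H_H^\sigma([0,T];\mathbb R^n)$. Restricting to the subspace $\mathcal H_{H,0}^\sigma$ then gives the fixed-endpoint statement at no extra cost. Since \eqref{cond1} is assumed, the first variation \eqref{eq:first-variation-barX} vanishes on both perturbation spaces. Theorem~\ref{variation} will therefore yield strict local minimality in both settings once a coercivity constant $c_0>0$ is produced.

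To produce $c_0$, I start from the decomposition \eqref{eq:second-variation-decomposition}:
\begin{equation*}
    \delta^2J_\epsilon(\bar X)[\eta,\eta]
    =
    \frac{1}{\epsilon}\mathcal I_H(\eta)
    -
    \frac{d_H}{2}\mathcal V(\eta).
\end{equation*}
The kinetic part is bounded below by the lower estimate in Lemma~\ref{lem:IH-HH-equivalence}, namely $\mathcal I_H(\eta)\ge C_1\|\eta\|_{\mathcal H_H^\sigma}^2$. This is legitimate because $\epsilon>0$ makes the factor $1/\epsilon$ positive. The potential part is bounded above by Lemma~\ref{lem:potential-upper-bound}, namely $\mathcal V(\eta)\le\lambda_V\|\eta\|_{\mathcal H_H^\sigma}^2$. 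Since $d_H>0$, this gives $-\frac{d_H}{2}\mathcal V(\eta)\ge-\frac{d_H}{2}\lambda_V\|\eta\|_{\mathcal H_H^\sigma}^2$. Adding the two bounds yields
\begin{equation*}
    \delta^2J_\epsilon(\bar X)[\eta,\eta]
    \ge
    \Bigl(\frac{C_1}{\epsilon}-\frac{d_H}{2}\lambda_V\Bigr)
    \|\eta\|_{\mathcal H_H^\sigma}^2 .
\end{equation*}
By \eqref{eq:epsilon-condition-local}, the bracket is a positive constant, and I take it as $c_0$.

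The only point needing care is that both lemmas hold on all of $\mathcal H_H^\sigma$, not merely on the zero-endpoint subspace. Lemma~\ref{lem:IH-HH-equivalence} is stated on the full space, and its proof via the quasinilpotent Volterra operator $\widetilde{\mathcal K}$ uses no terminal condition. Likewise, the embedding in Lemma~\ref{lem:HH-poincare} relies only on the boundedness of $K_H^\sigma$, so it also holds without any terminal condition. The remainder $o(\|\eta\|^2_{\mathcal H_H^\sigma})$ in \eqref{eq:J-taylor-variation} is also uniform on the full space. This uses $b\in C_b^{1,3}$ together with the continuous embedding $\mathcal H_H^\sigma\hookrightarrow C_0^\beta$, which controls the sup norm of $\eta$.

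No step is a real obstacle beyond this bookkeeping. Applying Theorem~\ref{variation} on each space then gives the claimed persistence of $\bar X$ as a locally most probable transition path and as a locally most probable evolution path.
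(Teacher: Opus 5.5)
Your proposal is correct and is essentially the paper's own proof. The lower bound of Lemma~\ref{lem:IH-HH-equivalence} and the upper bound of Lemma~\ref{lem:potential-upper-bound} give coercivity of $\delta^2J_\epsilon(\bar X)$ on all of $\mathcal H_H^\sigma$ with constant $C_1/\epsilon-\frac{d_H}{2}\lambda_V>0$, hence also on $\mathcal H_{H,0}^\sigma$, and Theorem~\ref{variation} then concludes. Your extra remark on the uniform Taylor remainder goes beyond what the paper checks, but note that for $H>1/2$ the inverse operator $(K_H^\sigma)^{-1}$ involves a fractional derivative, so you need H\"older control (not merely sup-norm control) of the quadratic remainder; the embedding $\mathcal H_H^\sigma\hookrightarrow C_0^\beta$ with $\beta>H-\frac12$ supplies this.
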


\begin{proof}
By Lemmas~\ref{lem:IH-HH-equivalence} and
\ref{lem:potential-upper-bound},
\begin{align}
    \delta^2J_\epsilon(\bar X)[\eta,\eta]
    &=
    \frac{1}{\epsilon}\mathcal I_H(\eta)
    -
    \frac{d_H}{2}\mathcal V(\eta)
    \notag\\
    &\geq
    \left(
        \frac{C_1}{\epsilon}
        -
        \frac{d_H}{2}\lambda_V
    \right)
    \|\eta\|_{\mathcal H_H^\sigma(0,T)}^2.
    \label{eq:second-variation-coercive}
\end{align}
Thus \eqref{eq:epsilon-condition-local} makes the second variation
coercive on $\mathcal H_H^\sigma([0,T];\mathbb R^n)$ and hence also on
$\mathcal H_{H,0}^\sigma([0,T];\mathbb R^n)$. The conclusion follows
from Theorem~\ref{variation}.
\end{proof}

\begin{remark}
\label{rem:constants-H-sigma}
The constants satisfy
\begin{equation*}
    \lambda_0=\lambda_0(H,\sigma,T),
    \qquad
    C_1=C_1(H,\sigma,T,\nabla b,\bar X),
\end{equation*}
while $\lambda_V$ also depends on
$\nabla^2(\nabla\!\cdot b)$. This dependence reflects the nonlocal
Cameron--Martin geometry induced by the fractional noise.
\end{remark}

In contrast, a negative direction of the Hessian of the divergence
destroys local minimality when the noise intensity is sufficiently
large.

\begin{theorem}[Destruction of local minimality for large noise]
\label{thm:destruction-large-noise}
Let $1/4<H<1$, and let $\bar X$ solve \eqref{nnoise}. Suppose that
\eqref{cond1} holds and that, for some $t_0\in(0,T)$,
\begin{equation}
\label{eq:negative-Hessian-direction}
    \nabla^2(\nabla\!\cdot b_{t_0})(\bar X_{t_0})
\end{equation}
has a negative eigenvalue. Then there exists $\epsilon_*>0$ such that,
for every $\epsilon>\epsilon_*$, the second variation of $J_\epsilon$
at $\bar X$ has a negative direction. Consequently, $\bar X$ is
neither a local most probable transition path nor a local most
probable evolution path.
\end{theorem}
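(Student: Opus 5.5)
The idea is to build one fixed test perturbation $\eta$, independent of $\epsilon$, that lies in $\mathcal H_{H,0}^\sigma([0,T];\mathbb R^n)$ and has $\mathcal V(\eta)>0$. We then read off the sign of the second variation from the decomposition \eqref{eq:second-variation-decomposition}. Since $\mathcal H_{H,0}^\sigma\subset\mathcal H_H^\sigma$, the same $\eta$ is admissible in both the fixed-endpoint and free-endpoint problems, so one construction covers both conclusions.

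\textbf{Step 1: construct $\eta$.} Let $v\in\mathbb R^n$ be a unit eigenvector with
\[
v^T\nabla^2(\nabla\!\cdot b_{t_0})(\bar X_{t_0})v=-\kappa<0 .
\]
By Assumption~\ref{ass:A} ($b\in C_b^{1,3}$), the map $s\mapsto \nabla^2(\nabla\!\cdot b_s)(\bar X_s)$ is continuous, so there is $\delta>0$ with $[t_0-\delta,t_0+\delta]\subset(0,T)$ and
\[
v^T\nabla^2(\nabla\!\cdot b_s)(\bar X_s)v\le-\kappa/2 \quad\text{on this interval.}
\]
Choose a nonzero $\psi\in C_c^\infty((t_0-\delta,t_0+\delta))$ with $\psi\ge0$, and set $\eta_s:=\psi(s)v$. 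Then $\eta_0=\eta_T=0$, and
\[
\mathcal V(\eta)\ge\frac{\kappa}{2}\|\psi\|_{L^2}^2>0 .
\]

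\textbf{Step 2: check that $\eta$ is admissible.} We need $\eta\in\mathcal H_H^\sigma$, which I expect to be the only delicate point. Since $\sigma^{-1}\in C^1$, the path $g:=\int_0^\cdot\sigma_s^{-1}\,d\eta_s=\int_0^\cdot\sigma_s^{-1}\psi'(s)v\,ds$ is $C^1$ and vanishes near $0$. By \eqref{eq:Ksigma_inverse} it therefore suffices to show that $K_H^{-1}g\in L^2$, using the explicit formulas \eqref{eq:KH_inverse} and \eqref{eq:KH_inverse_H_less_half_smooth}.
\begin{itemize}
\item For $H<1/2$: the function $s^{-\alpha}I_{0^+}^\alpha[s^\alpha g']$ is bounded on $[0,T]$, because $g'$ is smooth and supported away from $0$.
\item For $H>1/2$: the function $s^\alpha D_{0^+}^\alpha[s^{-\alpha}g']$ is the fractional derivative of a smooth, compactly supported function. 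It is therefore bounded, since $g$ is Hölder of every order below $1$ and $s^{-\alpha}$ is smooth on the support.
\item For $H=1/2$ the claim is immediate.
\end{itemize}
Hence $\|\eta\|_{\mathcal H_H^\sigma}<\infty$, and Lemma~\ref{lem:IH-HH-equivalence} gives $\mathcal I_H(\eta)\le C_2\|\eta\|_{\mathcal H_H^\sigma}^2<\infty$.

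\textbf{Step 3: conclude.} From \eqref{eq:second-variation-decomposition},
\[
\delta^2J_\epsilon(\bar X)[\eta,\eta]=\frac{1}{\epsilon}\mathcal I_H(\eta)-\frac{d_H}{2}\mathcal V(\eta).
\]
This is strictly negative as soon as
\[
\epsilon>\epsilon_*:=\frac{2\,\mathcal I_H(\eta)}{d_H\,\mathcal V(\eta)} .
\]
If $\mathcal I_H(\eta)=0$, any $\epsilon_*>0$ works. The constant $d_H$ is positive, so $\epsilon_*$ is well defined.

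Now fix $\epsilon>\epsilon_*$. Condition \eqref{cond1} makes the first variation vanish, so the expansion \eqref{eq:J-taylor-variation} applied to $\lambda\eta$ gives
\[
J_\epsilon(\bar X+\lambda\eta)-J_\epsilon(\bar X)=\frac{\lambda^2}{2}\delta^2J_\epsilon(\bar X)[\eta,\eta]+o(\lambda^2)<0
\]
for all small $\lambda\neq0$. The paths $\bar X+\lambda\eta$ keep both endpoints, so they are admissible in both path spaces, and they converge to $\bar X$ in $\mathcal H_H^\sigma$. Therefore $\bar X$ is not a local minimizer in either problem.
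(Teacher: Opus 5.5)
Your proposal is correct and follows the same route as the paper. You localize a bump $\psi(t)v$ near $t_0$ where $v^T\nabla^2(\nabla\!\cdot b_t)(\bar X_t)v<0$, so that $\mathcal V(\eta)>0$ eventually dominates $\epsilon^{-1}\mathcal I_H(\eta)$, and you note this perturbation is admissible in both endpoint settings. Your Step 2 supplies the admissibility check that the paper only asserts, and your explicit $\epsilon_*=2\mathcal I_H(\eta)/(d_H\mathcal V(\eta))$ makes the paper's ``sufficiently large $\epsilon$'' concrete. One small correction: $g'=\sigma^{-1}\psi'v$ is only $C^1$, and the relevant object for $H>1/2$ is $s^{-\alpha}g'$, not $g$; this still suffices because $D^\alpha_{0^+}f=I^{1-\alpha}_{0^+}f'$ is bounded for $C^1$ functions $f$ vanishing near $0$.
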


\begin{proof}
Let $v_0\neq0$ be a negative direction of
\eqref{eq:negative-Hessian-direction}. By continuity, there exists an
interval $U\subset(0,T)$ containing $t_0$ such that
\begin{equation*}
    v_0^T
    \nabla^2(\nabla\!\cdot b_t)(\bar X_t)
    v_0<0,
    \qquad t\in U.
\end{equation*}
Choose a nonzero $\chi\in C_c^\infty(U)$ and set
$\eta_t=\chi(t)v_0$. Then
$\eta\in\mathcal H_{H,0}^\sigma([0,T];\mathbb R^n)$ and
\begin{equation*}
    \int_0^T
    \eta_t^T
    \nabla^2(\nabla\!\cdot b_t)(\bar X_t)
    \eta_t\,dt
    <0.
\end{equation*}
It follows from \eqref{eq:second-variation-barX} that
\begin{equation*}
    \delta^2J_\epsilon(\bar X)[\eta,\eta]<0
\end{equation*}
for all sufficiently large $\epsilon$. Since \eqref{cond1} implies
$\delta J_\epsilon(\bar X)=0$, Taylor's expansion shows that $\bar X$
is not a local minimizer. The same perturbation is admissible in both
the fixed-endpoint and free-endpoint path spaces.
\end{proof}

\subsection{Global minimality for sufficiently small noise}

We now strengthen the local persistence result to unique global
minimality.

\begin{theorem}[Unique global minimality for sufficiently small noise]
\label{thm:global-minimality-small-noise}
Let $1/4<H<1$, and let $\bar X$ solve \eqref{nnoise}. Suppose that
\eqref{cond1} holds. Then there exists $\epsilon_*>0$ such that, for
every $0<\epsilon<\epsilon_*$, the path $\bar X$ is the unique global
minimizer of $J_\epsilon$ over
$x_0+\mathcal H_H^\sigma([0,T];\mathbb R^n)$. Consequently, it is also
the unique global minimizer over
$\mathcal H_{H;x_0,x_T}^{\sigma,T}$, where $x_T=\bar X_T$.
\end{theorem}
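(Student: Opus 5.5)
The plan is to exploit the decomposition \eqref{eq:OM-kinetic-potential-decomposition}, $J_\epsilon=\epsilon^{-1}I_H-V$, where $I_H\geq0$ vanishes only at $\bar X$. By the regularity in Assumption~\ref{ass:A}, the potential $V$ is bounded on the whole path space: $|V(\phi)|\leq \tfrac{d_H}{2}T\|\nabla\!\cdot b\|_\infty=:V_\infty$. I will fix a small threshold $M>0$, independent of $\epsilon$, and split the free-endpoint admissible class $x_0+\mathcal H_H^\sigma$ into the sublevel set $\{I_H\leq M\}$ and its complement. The fixed-endpoint statement then follows, because $\mathcal H_{H;x_0,x_T}^{\sigma,T}\subset x_0+\mathcal H_H^\sigma$ and $\bar X$ belongs to the smaller class when $x_T=\bar X_T$.

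\emph{Outside the sublevel set.} If $I_H(\phi)>M$, then
\[
J_\epsilon(\phi)-J_\epsilon(\bar X)\geq \frac{M}{\epsilon}-2V_\infty ,
\]
which is strictly positive once $\epsilon<M/(2V_\infty)$. This step is routine.

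\emph{Inside the sublevel set.} Write $\phi=\bar X+\eta$ and define the nonlinear residual
\[
F(\eta):=\eta-\int_0^\cdot\bigl(b_s(\bar X_s+\eta_s)-b_s(\bar X_s)\bigr)\,ds ,
\]
so that $I_H(\phi)=\tfrac12\|(K_H^\sigma)^{-1}F(\eta)\|_{L^2}^2$. The first goal is a quantitative injectivity estimate: $\|\eta\|_{\mathcal H_H^\sigma}\to0$ as $I_H(\bar X+\eta)\to0$.

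1. Since $\mathcal H_H^\sigma\hookrightarrow C_0^\beta$, the quantity $\|F(\eta)\|_\infty$ is controlled by $\sqrt{I_H}$. Gr\"onwall's inequality, using that $b$ is Lipschitz, then gives $\|\eta\|_\infty\leq C\|F(\eta)\|_\infty\leq C\sqrt{I_H}$.

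2. Next I recover the Cameron--Martin norm from the identity
\[
(K_H^\sigma)^{-1}\eta=(K_H^\sigma)^{-1}F(\eta)+(K_H^\sigma)^{-1}\int_0^\cdot g\,ds,
\qquad g:=b(\bar X+\eta)-b(\bar X),
\]
using the explicit formulas \eqref{eq:Ksigma_inverse_explicit}.
   For $H\leq1/2$, the operator $g\mapsto s^{-\alpha}I_{0^+}^\alpha(s^\alpha\sigma^{-1}g)$ is bounded on $L^2$. Hence $\|\eta\|_{\mathcal H_H^\sigma}\lesssim\sqrt{I_H}+\|g\|_{L^2}\lesssim\sqrt{I_H}$.
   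For $H>1/2$, one needs a fractional derivative of order $\alpha=H-\tfrac12$ of $\sigma^{-1}g$. The function $g$ is $\beta$-H\"older with $\beta>\alpha$, and its H\"older seminorm is controlled by $\|\eta\|_\beta$. A Gr\"onwall-type bound for the Volterra map then closes the estimate; if needed, I would instead use the bounded invertibility argument of Lemma~\ref{lem:IH-HH-equivalence}, applied to the linear part, with the nonlinear part treated as a small perturbation.

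3. With $\eta$ small, expand $F(\eta)=\mathcal L\eta-R(\eta)$, where $R(\eta)=\int_0^\cdot\bigl(g-\nabla b(\bar X)\eta\bigr)ds$ is quadratic. Show that $\|(K_H^\sigma)^{-1}R(\eta)\|_{L^2}\leq C\|\eta\|_{\mathcal H_H^\sigma}^2$, using $b\in C_b^{1,3}$ and the same mapping properties as in step 2. Together with Lemma~\ref{lem:IH-HH-equivalence} this yields, for $M$ small enough,
\[
I_H(\bar X+\eta)\geq \tfrac{C_1}{4}\|\eta\|_{\mathcal H_H^\sigma}^2 .
\]

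4. For the potential, condition \eqref{cond1} kills the linear term of $V$ along $\bar X$. Taylor's formula and Lemma~\ref{lem:HH-poincare} (in its $L^\infty$ form via the H\"older embedding) give
\[
|V(\bar X+\eta)-V(\bar X)|\leq C_V\|\eta\|_\infty^2\leq C_V'\|\eta\|_{\mathcal H_H^\sigma}^2 .
\]

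5. Combining steps 3 and 4,
\[
J_\epsilon(\bar X+\eta)-J_\epsilon(\bar X)\geq\Bigl(\frac{C_1}{4\epsilon}-C_V'\Bigr)\|\eta\|_{\mathcal H_H^\sigma}^2>0
\]
for $\eta\neq0$ and $\epsilon<C_1/(4C_V')$.

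Taking $\epsilon_*=\min\{M/(2V_\infty),\,C_1/(4C_V')\}$, with $M$ fixed first and independently of $\epsilon$, gives uniqueness of the global minimizer.

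The main obstacle is steps 2--3 for $H>1/2$. There the operator $(K_H^\sigma)^{-1}$ is a genuine fractional derivative, so the nonlinear remainder must be controlled in a H\"older norm of order exceeding $H-\tfrac12$, and not merely in $L^\infty$. I would first try to work in $C_0^\beta$ with $\beta\in(H-\tfrac12,H)$, using the Marchaud representation of $D_{0^+}^\alpha$ to bound $\|D^\alpha(\sigma^{-1}g)\|_{L^2}\lesssim[g]_\beta$ and the composition estimate $[b(\bar X+\eta)-b(\bar X)]_\beta\lesssim\|\eta\|_{C^\beta}$. For the quadratic remainder the analogous estimate is $\lesssim\|\eta\|_{C^\beta}^2$.
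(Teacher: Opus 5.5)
Your proposal is correct. Its outer architecture matches the paper's: you split $x_0+\mathcal H_H^\sigma$ by a kinetic sublevel set and dispose of $\{I_H>M\}$ via $|V|\le V_\infty$. The difference lies inside the sublevel set.

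The paper never estimates $J_\epsilon$ there directly. It fixes one $\epsilon_{\mathrm{loc}}$ satisfying the coercivity condition and takes from Theorem~\ref{thm:local-minimizer-small-noise} a radius $\delta$ of strict local minimality in $\mathcal H_H^\sigma$. It then notes that $J_\epsilon(\phi)-J_\epsilon(\bar X)=J_{\epsilon_{\mathrm{loc}}}(\phi)-J_{\epsilon_{\mathrm{loc}}}(\bar X)+(\epsilon^{-1}-\epsilon_{\mathrm{loc}}^{-1})I_H(\phi)$ is nonincreasing in $\epsilon$, because $I_H\ge0=I_H(\bar X)$; hence the same $\delta$ works for all $\epsilon\le\epsilon_{\mathrm{loc}}$. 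Finally it merely asserts \eqref{eq:H-distance-controlled-by-IH} to fit $U(M_0)$ into that ball.

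You instead prove that bound in your steps 1--2, including the $H>1/2$ case through H\"older control of order above $H-\tfrac12$. You then replace the appeal to local minimality by explicit quadratic bounds on the kinetic and potential parts, valid over the whole sublevel set.

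\begin{itemize}
\item The paper's route is shorter and reuses the local theorem. However, it silently relies on the uniform $o(\|\eta\|_{\mathcal H_H^\sigma}^2)$ remainder in \eqref{eq:J-taylor-variation}, which for $H>1/2$ needs the same kind of fractional-derivative estimates you flag.
\item Your route is self-contained. It yields $J_\epsilon(\bar X+\eta)-J_\epsilon(\bar X)\ge c\|\eta\|_{\mathcal H_H^\sigma}^2$ on $\{I_H\le M\}$ with an explicit $\epsilon_*$, and it needs no monotonicity-in-$\epsilon$ trick.
\end{itemize}

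Your step 3 is dispensable. Step 2 already gives $I_H(\bar X+\eta)\ge C^{-2}\|\eta\|_{\mathcal H_H^\sigma}^2$ on the sublevel set. Together with step 4 this closes the proof with $\epsilon_*=\min\{M/(2V_\infty),\,C^{-2}/C_V'\}$. Dropping step 3 spares you the quadratic remainder bound $[r]_\beta\lesssim\|\eta\|_{C^\beta}^2$, which for time-dependent $b$ would require H\"older continuity of $\nabla^2 b$ in $t$ of order above $H-\tfrac12$.

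One notational correction: $F(\eta)=\eta-\int_0^\cdot\nabla b_s(\bar X_s)\eta_s\,ds-R(\eta)$, not $\mathcal L\eta-R(\eta)$, since $\mathcal L$ is the differential operator.
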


\begin{proof}
Choose $\epsilon_{\mathrm{loc}}>0$ satisfying
\begin{equation*}
    \frac{C_1}{\epsilon_{\mathrm{loc}}}
    -
    \frac{d_H}{2}\lambda_V
    >
    0.
\end{equation*}
By Theorem~\ref{thm:local-minimizer-small-noise}, there exists
$\delta>0$ such that
\begin{equation}
\label{eq:local-minimality}
    J_{\epsilon_{\mathrm{loc}}}(\bar X)
    <
    J_{\epsilon_{\mathrm{loc}}}(\phi)
\end{equation}
whenever
\begin{equation*}
    0<
    \|\phi-\bar X\|_{\mathcal H_H^\sigma(0,T)}
    \leq\delta.
\end{equation*}
Since $I_H(\bar X)=0$, for every
$0<\epsilon\leq\epsilon_{\mathrm{loc}}$,
\begin{align*}
    J_\epsilon(\phi)-J_\epsilon(\bar X)
    ={}&
    J_{\epsilon_{\mathrm{loc}}}(\phi)
    -
    J_{\epsilon_{\mathrm{loc}}}(\bar X)
    \\
    &+
    \left(
        \frac1\epsilon
        -
        \frac1{\epsilon_{\mathrm{loc}}}
    \right)
    I_H(\phi).
\end{align*}
Thus \eqref{eq:local-minimality} remains valid, with the same
$\delta$, for every $0<\epsilon\leq\epsilon_{\mathrm{loc}}$.

Define
\begin{equation}
\label{eq:Ndelta-def}
    \mathcal N_\delta
    :=
    \left\{
        \phi\in x_0+\mathcal H_H^\sigma:
        \|\phi-\bar X\|_{\mathcal H_H^\sigma(0,T)}
        \leq\delta
    \right\}
\end{equation}
and, for $M>0$,
\begin{equation}
\label{eq:UM-def}
    U(M)
    :=
    \left\{
        \phi\in x_0+\mathcal H_H^\sigma:
        I_H(\phi)\leq M
    \right\}.
\end{equation}

We claim that a sufficiently small kinetic-energy sublevel set is
contained in $\mathcal N_\delta$. Indeed, set
$h:=\phi-\bar X$. Since $\bar X$ solves \eqref{nnoise},
\begin{align}
& (K_H^\sigma)^{-1}
\left(
    \phi_\cdot-x_0
    -
    \int_0^\cdot b_s(\phi_s)\,ds
\right)
\notag\\
&\quad=
(K_H^\sigma)^{-1}
\left(
    h_\cdot
    -
    \int_0^\cdot
    \bigl[
        b_s(\bar X_s+h_s)-b_s(\bar X_s)
    \bigr]\,ds
\right).
\label{eq:residual-difference}
\end{align}
The global Lipschitz continuity of $b$ and the uniform Volterra
resolvent estimate imply that
\begin{equation}
\label{eq:H-distance-controlled-by-IH}
    \|\phi-\bar X\|_{\mathcal H_H^\sigma(0,T)}
    \leq
    C_R\sqrt{2I_H(\phi)},
\end{equation}
where $C_R>0$ is independent of $\phi$. Choosing
\begin{equation}
\label{eq:M0-def}
    M_0
    :=
    \frac{\delta^2}{2C_R^2},
\end{equation}
we obtain
\begin{equation*}
    U(M_0)\subset\mathcal N_\delta.
\end{equation*}
Therefore,
\begin{equation}
\label{eq:min-on-UM0}
    J_\epsilon(\bar X)
    <
    J_\epsilon(\phi),
    \qquad
    \phi\in U(M_0),\quad
    \phi\neq\bar X,
\end{equation}
for every $0<\epsilon\leq\epsilon_{\mathrm{loc}}$.

It remains to consider paths outside $U(M_0)$. Since
$\nabla\!\cdot b$ is bounded, set
\begin{equation*}
    C_{\mathrm{div}}
    :=
    \frac{d_H}{2}
    T
    \|\nabla\!\cdot b\|_\infty.
\end{equation*}
Then $|V(\phi)|\leq C_{\mathrm{div}}$ for every admissible $\phi$.
If $\phi\notin U(M_0)$, then
\begin{equation}
\label{eq:J-lower-outside}
    J_\epsilon(\phi)
    >
    \frac{M_0}{\epsilon}
    -
    C_{\mathrm{div}},
\end{equation}
whereas
\begin{equation*}
    J_\epsilon(\bar X)
    =
    -V(\bar X)
    \leq
    C_{\mathrm{div}}.
\end{equation*}
Hence, with
\begin{equation}
\label{eq:eps-global}
    \epsilon_{\mathrm{glob}}
    :=
    \frac{M_0}{2C_{\mathrm{div}}+1},
\end{equation}
we have
\begin{equation*}
    J_\epsilon(\phi)>J_\epsilon(\bar X),
    \qquad
    \phi\notin U(M_0),
\end{equation*}
whenever $0<\epsilon<\epsilon_{\mathrm{glob}}$.

Finally, set
\begin{equation*}
    \epsilon_*
    :=
    \min
    \left\{
        \epsilon_{\mathrm{loc}},
        \epsilon_{\mathrm{glob}}
    \right\}.
\end{equation*}
For every $0<\epsilon<\epsilon_*$, the estimates inside and outside
$U(M_0)$ show that
\begin{equation*}
    J_\epsilon(\bar X)<J_\epsilon(\phi)
    \qquad
    \text{for every }\phi\neq\bar X.
\end{equation*}
Thus $\bar X$ is the unique global minimizer in the free-endpoint path
space. Since
\begin{equation*}
    \mathcal H_{H;x_0,x_T}^{\sigma,T}
    \subset
    x_0+\mathcal H_H^\sigma,
\end{equation*}
the fixed-endpoint conclusion follows immediately.
\end{proof}

For vector fields with spatially constant divergence, persistence holds
for every noise intensity.

\begin{corollary}[Spatially constant divergence]
\label{cor: spatially constant divergence}
Suppose that the drift of \eqref{sde} satisfies
\begin{equation*}
    \nabla\!\cdot b_t(x)=C_t,
    \qquad
    (t,x)\in[0,T]\times\mathbb R^n.
\end{equation*}
Then, for every initial condition and every $\epsilon>0$, the
corresponding deterministic trajectory persists as both the unique
most probable evolution path and the unique most probable transition
path.
\end{corollary}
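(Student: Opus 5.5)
If $\nabla\!\cdot b_t(x)=C_t$ does not depend on $x$, then the potential term does not depend on the path. For every admissible $\phi$,
\begin{equation*}
    V(\phi)=-\frac{d_H}{2}\int_0^T C_t\,dt=:V_0 .
\end{equation*}
The decomposition \eqref{eq:OM-kinetic-potential-decomposition} then gives
\begin{equation*}
    J_\epsilon(\phi)=\frac1\epsilon I_H(\phi)-V_0 .
\end{equation*}
The plan is to reduce everything to the kinetic term. For each fixed $\epsilon>0$, minimizing $J_\epsilon$ over any admissible class $\mathcal A$ is the same as minimizing $I_H$ over $\mathcal A$, and the minimizers coincide. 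No smallness of $\epsilon$ is needed, because the $\epsilon$-dependence is only a positive factor in front of $I_H$ plus an additive constant.

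Next I would show that $\bar X$ is the unique zero of $I_H$ on $x_0+\mathcal H_H^\sigma$. Clearly $I_H\ge0$ and $I_H(\bar X)=0$, since the residual $\bar X_\cdot-x_0-\int_0^\cdot b_s(\bar X_s)\,ds$ vanishes by \eqref{nnoise}. Conversely, suppose $I_H(\phi)=0$. Then $(K_H^\sigma)^{-1}$ applied to the residual is zero in $L^2$. Since $K_H^\sigma$ is injective (it defines the Cameron--Martin norm), the residual itself is zero, so $\phi_t=x_0+\int_0^t b_s(\phi_s)\,ds$. Because $b$ is globally Lipschitz under Assumption~\ref{ass:A}, the ODE has a unique solution, and Gr\"onwall gives $\phi=\bar X$. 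Alternatively, estimate \eqref{eq:H-distance-controlled-by-IH} gives $\|\phi-\bar X\|_{\mathcal H_H^\sigma}\le C_R\sqrt{2I_H(\phi)}=0$ directly. Hence $J_\epsilon(\phi)>J_\epsilon(\bar X)$ for all $\phi\ne\bar X$ in $x_0+\mathcal H_H^\sigma$, so $\bar X$ is the unique most probable evolution path.

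For the transition problem, take $x_T=\bar X_T$. Then $\bar X\in\mathcal H_{H;x_0,x_T}^{\sigma,T}\subset x_0+\mathcal H_H^\sigma$, and strict global minimality on the larger space passes to the smaller one. This gives uniqueness as the most probable transition path. The argument also needs $\bar X\in x_0+\mathcal H_H^\sigma$, i.e.\ that $\bar X-x_0$ is admissible. This holds because $\bar X$ is $C^1$, with $\dot{\bar X}=b(\bar X)$ being $C^1$ in time. Smooth paths vanishing at $0$ lie in $\mathcal H_H$, and hence in $\mathcal H_H^\sigma$, which coincides with $\mathcal H_H$ as a set. The one step requiring care is the injectivity of $K_H^\sigma$ (equivalently, the use of \eqref{eq:H-distance-controlled-by-IH}); I expect it to be routine given the Volterra structure already used in Lemma~\ref{lem:IH-HH-equivalence}.
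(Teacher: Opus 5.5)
Your proof is correct and follows the same route as the paper. With $\nabla\!\cdot b_t\equiv C_t$ the potential term is path-independent, so $J_\epsilon=\epsilon^{-1}I_H+\frac{d_H}{2}\int_0^T C_t\,dt$ is minimized exactly where $I_H=0$, and uniqueness for the Lipschitz ODE then forces $\phi=\bar X$. Your extra checks are details the paper leaves implicit: that $\bar X-x_0\in\mathcal H_H^\sigma$, and that the transition claim is read with $x_T=\bar X_T$ via $\mathcal H_{H;x_0,x_T}^{\sigma,T}\subset x_0+\mathcal H_H^\sigma$. The step you flag as delicate is immediate, since the residual is $K_H^\sigma$ applied to $(K_H^\sigma)^{-1}$ of itself, i.e.\ $K_H^\sigma 0=0$.
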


\begin{proof}
Since the divergence term is independent of the path,
\begin{equation*}
    J_\epsilon(\phi)
    =
    \frac1\epsilon I_H(\phi)
    +
    \frac{d_H}{2}\int_0^T C_t\,dt.
\end{equation*}
Hence $J_\epsilon$ is minimized precisely when $I_H(\phi)=0$, or
equivalently,
\begin{equation*}
    \phi_t
    =
    x_0+\int_0^t b_s(\phi_s)\,ds.
\end{equation*}
Uniqueness of the deterministic system then gives $\phi=\bar X$.
\end{proof}

\subsection{Quantitative small-noise convergence of most probable  paths}
\label{subsec:quantitative-convergence}

In the preceding subsections, we showed that, under condition
\eqref{cond1} and for sufficiently small noise intensity, the
deterministic trajectory $\bar X$ persists as a most probable path of
\eqref{sde}. We now consider the general case in which \eqref{cond1}
may fail.

By \eqref{eq:first-variation-barX}, condition \eqref{cond1} is
necessary and sufficient for $\bar X$ to be a critical point of
$J_\epsilon$. Hence, if \eqref{cond1} fails, $\bar X$ cannot be a most
probable path for any fixed $\epsilon>0$. Nevertheless, the
decomposition
\begin{equation*}
    J_\epsilon(\phi)
    =
    \frac1\epsilon I_H(\phi)-V(\phi)
\end{equation*}
suggests that global most probable paths should approach $\bar X$ as
$\epsilon\to0$, because $I_H$ is nonnegative and vanishes uniquely at
$\bar X$. This is also consistent with the large deviation principle
in \cite{FanYuYuan2023}, whose rate functional is $I_H$. The large
deviation principle, however, does not by itself imply convergence of
the minimizers of $J_\epsilon$, nor does it provide a convergence rate
in the path-space topologies considered here.

We prove below that every global most probable path converges to
$\bar X$ in both the uniform and H\"older topologies at the rate
$O(\epsilon)$.

Since
$b\in C_b^{1,3}([0,T]\times\mathbb R^n;\mathbb R^n)$, there exists
$L>0$ such that
\begin{equation}
\label{eq:global-Lipschitz-b}
    |b_t(x)-b_t(y)|
    \leq
    L|x-y|,
    \qquad
    t\in[0,T],\quad x,y\in\mathbb R^n.
\end{equation}
The boundedness of $\nabla(\nabla\!\cdot b_t)$ also gives
\begin{equation}
\label{eq:potential-Lipschitz}
    |V(\phi)-V(\psi)|
    \leq
    L\int_0^T|\phi_t-\psi_t|\,dt
\end{equation}
for all admissible paths $\phi$ and $\psi$, after increasing $L$ if
necessary.

\begin{theorem}
\label{thm:uniform-convergence-global-minimizers}
Let $1/4<H<1$, and let $\bar X$ solve \eqref{nnoise}. Let
$\phi^\epsilon$ be a global minimizer of $J_\epsilon$ in either the
fixed-endpoint or free-endpoint admissible path space. Then there
exists $C>0$, independent of $\epsilon$, such that
\begin{equation}
\label{eq:uniform-convergence-rate}
    \|\phi^\epsilon-\bar X\|_\infty
    \leq
    C\epsilon.
\end{equation}
\end{theorem}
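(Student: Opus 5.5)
The plan is a comparison argument: test the minimality of $\phi^\epsilon$ against $\bar X$, then convert the resulting kinetic-energy bound into a sup-norm bound. Throughout, in the fixed-endpoint case we take $x_T=\bar X_T$, as in the standing setting of this section, so that $\bar X$ is admissible in both path spaces.

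\textbf{Step 1 (energy inequality).} Set $h^\epsilon:=\phi^\epsilon-\bar X$. Global minimality gives $J_\epsilon(\phi^\epsilon)\le J_\epsilon(\bar X)$. Since $I_H(\bar X)=0$, the decomposition \eqref{eq:OM-kinetic-potential-decomposition} turns this into
\begin{equation*}
    \frac1\epsilon I_H(\phi^\epsilon)
    \le
    V(\phi^\epsilon)-V(\bar X)
    \le
    L\int_0^T|h_t^\epsilon|\,dt
    \le
    LT\,\|h^\epsilon\|_\infty ,
\end{equation*}
where the middle inequality is \eqref{eq:potential-Lipschitz}. Note that \eqref{cond1} is not needed here; only the Lipschitz bound on $V$ is used.

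\textbf{Step 2 (control of $h^\epsilon$ by the kinetic energy).} We aim to prove
\begin{equation*}
    \|h^\epsilon\|_\infty\le C_*\sqrt{I_H(\phi^\epsilon)},
\end{equation*}
with $C_*$ independent of $\epsilon$ and of $\phi^\epsilon$. To do so, define the residual
\begin{equation*}
    r:=\phi^\epsilon-x_0-\int_0^\cdot b_s(\phi^\epsilon_s)\,ds .
\end{equation*}
By the definition of $I_H$, $r\in\mathcal H_H^\sigma$ with $\|r\|_{\mathcal H_H^\sigma}=\sqrt{2I_H(\phi^\epsilon)}$. The continuous embedding $\mathcal H_H^\sigma\hookrightarrow C_0^\beta\subset C$ from Section~2 gives $\|r\|_\infty\le C_E\sqrt{2I_H(\phi^\epsilon)}$. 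Subtracting the integral equation for $\bar X$ yields
\begin{equation*}
    h^\epsilon_t=r_t+\int_0^t\bigl[b_s(\bar X_s+h^\epsilon_s)-b_s(\bar X_s)\bigr]\,ds .
\end{equation*}
By the global Lipschitz bound \eqref{eq:global-Lipschitz-b}, $|h^\epsilon_t|\le\|r\|_\infty+L\int_0^t|h^\epsilon_s|\,ds$. Gr\"onwall's inequality then gives $\|h^\epsilon\|_\infty\le e^{LT}C_E\sqrt{2I_H(\phi^\epsilon)}$.

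Alternatively, one could invoke \eqref{eq:H-distance-controlled-by-IH} directly and then the same embedding. The Gr\"onwall route is more elementary, since it needs only the sup-norm embedding and avoids the Volterra resolvent.

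\textbf{Step 3 (closing the bootstrap).} Combining Steps 1 and 2,
\begin{equation*}
    I_H(\phi^\epsilon)\le \epsilon LT C_*\sqrt{I_H(\phi^\epsilon)} .
\end{equation*}
Hence either $I_H(\phi^\epsilon)=0$, in which case $\phi^\epsilon=\bar X$, or $\sqrt{I_H(\phi^\epsilon)}\le LTC_*\,\epsilon$. In both cases Step 2 gives
\begin{equation*}
    \|\phi^\epsilon-\bar X\|_\infty\le LTC_*^2\,\epsilon ,
\end{equation*}
which is \eqref{eq:uniform-convergence-rate} with $C=LTC_*^2$. The argument does not require $\epsilon$ to be small.

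\textbf{Main obstacle.} The key point is that the constant $C_*$ in Step 2 must be uniform: it may depend only on $H,\sigma,T,L$, and not on $\phi^\epsilon$ or $\epsilon$. This rests on two facts: the embedding constant $C_E$ is fixed on the fixed interval $[0,T]$, and the Lipschitz constant of $b$ is global (this is where $b\in C_b^{1,3}$ is used). I would check that the embedding bound holds on the full free-endpoint space $\mathcal H_H^\sigma$, not only on $\mathcal H_{H,0}^\sigma$. This is immediate from the embedding stated in Section~2, since elements of $\mathcal H_H^\sigma$ vanish at $0$.
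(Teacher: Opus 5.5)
Your proposal is correct and follows essentially the same route as the paper. The paper also writes $h^\epsilon=K_H^\sigma u^\epsilon+\int_0^\cdot[b_s(\phi^\epsilon_s)-b_s(\bar X_s)]\,ds$, bounds $\|h^\epsilon\|_\infty\le C\|u^\epsilon\|_{L^2}$ via the Cameron--Martin embedding and Gr\"onwall, and closes with $\tfrac12\|u^\epsilon\|_{L^2}^2\le\epsilon L\int_0^T|h^\epsilon_t|\,dt\le C\epsilon\|u^\epsilon\|_{L^2}$ from the comparison with $\bar X$; your residual $r$ is exactly $K_H^\sigma u^\epsilon$, and your separate treatment of the case $I_H(\phi^\epsilon)=0$ just makes explicit the division the paper performs implicitly.
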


\begin{proof}
Set
\begin{equation*}
    h^\epsilon:=\phi^\epsilon-\bar X
\end{equation*}
and
\begin{equation*}
    u^\epsilon
    :=
    (K_H^\sigma)^{-1}
    \left(
        \phi^\epsilon_\cdot-x_0
        -
        \int_0^\cdot b_s(\phi^\epsilon_s)\,ds
    \right).
\end{equation*}
Then
\begin{equation*}
    I_H(\phi^\epsilon)
    =
    \frac12\|u^\epsilon\|_{L^2}^2
\end{equation*}
and
\begin{equation}
\label{eq:path-difference-control}
    h^\epsilon_t
    =
    (K_H^\sigma u^\epsilon)(t)
    +
    \int_0^t
    \left[
        b_s(\phi^\epsilon_s)-b_s(\bar X_s)
    \right]ds.
\end{equation}
By \eqref{eq:global-Lipschitz-b}, Gr\"onwall's inequality, and the
continuous Cameron--Martin embedding,
\begin{equation}
\label{eq:uniform-control-by-u}
    \|h^\epsilon\|_\infty
    \leq
    C\|u^\epsilon\|_{L^2}.
\end{equation}

Since $\bar X$ is admissible, $I_H(\bar X)=0$, and
$\phi^\epsilon$ is a global minimizer,
\begin{align*}
    \frac12\|u^\epsilon\|_{L^2}^2
    &=
    I_H(\phi^\epsilon)
    \\
    &\leq
    \epsilon
    |V(\phi^\epsilon)-V(\bar X)|
    \\
    &\leq
    \epsilon L
    \int_0^T|h^\epsilon_t|\,dt
    \\
    &\leq
    C\epsilon\|u^\epsilon\|_{L^2}.
\end{align*}
Therefore,
\begin{equation}
\label{eq:u-epsilon-order}
    \|u^\epsilon\|_{L^2}
    \leq
    C\epsilon.
\end{equation}
Combining this with \eqref{eq:uniform-control-by-u} proves
\eqref{eq:uniform-convergence-rate}.
\end{proof}

We next obtain convergence in the H\"older topology.

\begin{corollary}
\label{cor:Holder-convergence-global-minimizers}
Under the assumptions of
Theorem~\ref{thm:uniform-convergence-global-minimizers}, let
$\beta\in(0,H)$ be such that
\begin{equation*}
    \mathcal H_H^\sigma
    \hookrightarrow
    C^\beta([0,T];\mathbb R^n)
\end{equation*}
continuously. Then there exists $C>0$, independent of $\epsilon$, such
that
\begin{equation}
\label{eq:Holder-convergence-rate}
    \|\phi^\epsilon-\bar X\|_\beta
    \leq
    C\epsilon.
\end{equation}
\end{corollary}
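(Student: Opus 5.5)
The plan is to reuse the two quantitative bounds from the proof of Theorem~\ref{thm:uniform-convergence-global-minimizers} and upgrade the uniform estimate to a H\"older one. Those bounds are
\begin{equation*}
    \|u^\epsilon\|_{L^2}\leq C\epsilon,
    \qquad
    \|h^\epsilon\|_\infty\leq C\epsilon .
\end{equation*}
The upgrade goes through the representation \eqref{eq:path-difference-control}, namely
\begin{equation*}
    h^\epsilon_t=(K_H^\sigma u^\epsilon)(t)+\int_0^t\bigl[b_s(\phi^\epsilon_s)-b_s(\bar X_s)\bigr]\,ds .
\end{equation*}
Since $\phi^\epsilon_0=\bar X_0=x_0$, we have $h^\epsilon\in C_0^\beta$. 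On this space the seminorm $[\cdot]_\beta$ is equivalent to the full H\"older norm. It therefore suffices to bound the H\"older seminorm of each term on the right-hand side by $C\epsilon$.

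For the first term, note that $K_H^\sigma u^\epsilon\in\mathcal H_H^\sigma$ and $\|K_H^\sigma u^\epsilon\|_{\mathcal H_H^\sigma}=\|u^\epsilon\|_{L^2}$ by definition of the inner product. The assumed continuous embedding $\mathcal H_H^\sigma\hookrightarrow C^\beta$ then gives
\begin{equation*}
    [K_H^\sigma u^\epsilon]_\beta\leq C\|u^\epsilon\|_{L^2}\leq C\epsilon .
\end{equation*}
For the drift term $g_t:=\int_0^t[b_s(\phi^\epsilon_s)-b_s(\bar X_s)]\,ds$, the global Lipschitz bound \eqref{eq:global-Lipschitz-b} yields, for $s<t$,
\begin{equation*}
    |g_t-g_s|\leq L\|h^\epsilon\|_\infty|t-s|\leq L T^{1-\beta}\|h^\epsilon\|_\infty|t-s|^\beta .
\end{equation*}
Combined with Theorem~\ref{thm:uniform-convergence-global-minimizers}, this gives $[g]_\beta\leq C\epsilon$.

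Adding the two estimates, and adding $\|h^\epsilon\|_\infty\leq C\epsilon$ if the full norm is wanted, proves \eqref{eq:Holder-convergence-rate}. The constant depends only on $L$, $T$, $\beta$, the embedding constant, and the constant from the previous theorem, so it is independent of $\epsilon$.

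The only delicate point is to use the bound on $u^\epsilon$ itself rather than only the sup-norm bound on $h^\epsilon$: a sup-norm bound alone cannot control the H\"older seminorm of the rough part $K_H^\sigma u^\epsilon$. This bound was already derived in the proof of Theorem~\ref{thm:uniform-convergence-global-minimizers}. It holds uniformly for both endpoint settings, because it used only that $\bar X$ is admissible, that $I_H(\bar X)=0$, and that $\phi^\epsilon$ is a global minimizer. Hence no further obstacle is expected.
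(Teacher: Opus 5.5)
Your proposal is correct and follows essentially the same route as the paper. You decompose $h^\epsilon$ as $K_H^\sigma u^\epsilon$ plus the drift integral, bound the first piece via the assumed embedding together with $\|u^\epsilon\|_{L^2}\leq C\epsilon$ from the proof of Theorem~\ref{thm:uniform-convergence-global-minimizers}, and bound the second via the Lipschitz estimate and the uniform rate $\|h^\epsilon\|_\infty\leq C\epsilon$.
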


\begin{proof}
Write
\begin{equation}
\label{eq:Holder-decomposition}
    h^\epsilon_t
    =
    (K_H^\sigma u^\epsilon)(t)
    +
    \int_0^t g^\epsilon_s\,ds,
    \qquad
    g^\epsilon_t
    :=
    b_t(\bar X_t+h^\epsilon_t)-b_t(\bar X_t).
\end{equation}
By Theorem~\ref{thm:uniform-convergence-global-minimizers},
\begin{equation*}
    \|g^\epsilon\|_\infty
    \leq
    L\|h^\epsilon\|_\infty
    \leq
    C\epsilon.
\end{equation*}
Hence
\begin{equation*}
    \left\|
        \int_0^\cdot g^\epsilon_s\,ds
    \right\|_\beta
    \leq
    C\|g^\epsilon\|_\infty
    \leq
    C\epsilon.
\end{equation*}
Moreover, the Cameron--Martin embedding and
\eqref{eq:u-epsilon-order} give
\begin{equation*}
    \|K_H^\sigma u^\epsilon\|_\beta
    \leq
    C\|u^\epsilon\|_{L^2}
    \leq
    C\epsilon.
\end{equation*}
The conclusion follows from \eqref{eq:Holder-decomposition}.
\end{proof}

Thus, even when \eqref{cond1} fails, every global most probable path
converges to $\bar X$ at the rate $O(\epsilon)$ in both the uniform
and H\"older topologies.

\section{Long-Time Behavior of the Second Variation along Periodic
Trajectories}
\label{sec:long-time-periodic-trajectories}

In the preceding section, we showed that, on every fixed time interval
$[0,T]$, most probable paths persist below a noise threshold that may
depend on $T$. We now investigate whether this persistence remains
valid over long time intervals. More precisely, for a fixed noise
intensity, we study the variational structure along a periodic orbit
over $[0,NT]$ as $N\to\infty$, from the viewpoint of most probable
paths.

Throughout this section, we assume that the drift is autonomous and
that the noise-free system admits a nonconstant hyperbolic
$T$-periodic orbit $\bar X$:
\begin{equation*}
    \bar X_{t+T}=\bar X_t,
    \qquad
    \bar X_0=\bar X_T=x_0.
\end{equation*}
We also assume that $\sigma_t$ is $T$-periodic and regard $\bar X$ as
a trajectory on $[0,NT]$ for every $N\in\mathbb N^+$. The
corresponding Onsager--Machlup functional is
\begin{equation}
\label{eq:om functional longtime}
\begin{aligned}
    J_{\epsilon,N}(\phi)
    &=
    \frac{1}{2\epsilon}
    \int_0^{NT}
    \left|
        (K_H^\sigma)^{-1}
        \left(
            \phi_\cdot-x_0
            -
            \int_0^\cdot b(\phi_s)\,ds
        \right)(t)
    \right|^2\,dt
    \\
    &\quad+
    \frac{d_H}{2}
    \int_0^{NT}
    \nabla\!\cdot b(\phi_t)\,dt,
\end{aligned}
\end{equation}
where $\phi_0=\phi_{NT}=x_0$.

For every fixed $N$, the results of the preceding section provide a
threshold $\epsilon_N^*>0$ such that $\bar X$ is a local minimizer
whenever $0<\epsilon<\epsilon_N^*$. The long-time problem is whether
this threshold can be chosen independently of $N$.

The second variation along $\bar X$ has the form
\begin{equation*}
    \delta^2J_{\epsilon,N}(\bar X)[\eta,\eta]
    =
    \frac{1}{\epsilon}\mathcal I_{H,N}(\eta)
    -
    \frac{d_H}{2}\mathcal V_N(\eta).
\end{equation*}
The fixed-time coercivity argument depends on Cameron--Martin
embedding constants that vary with the length of the interval. For
example, when $H=1/2$, the Dirichlet--Poincar\'e inequality on
$[0,NT]$ gives
\begin{equation*}
    \|\eta\|_{L^2(0,NT)}^2
    \leq
    \left(\frac{NT}{\pi}\right)^2
    \|\eta'\|_{L^2(0,NT)}^2,
\end{equation*}
so the corresponding coercive lower bound deteriorates as
$N\to\infty$.

This degeneration alone, however, does not determine the sign of the
second variation. The tangential mode
\begin{equation*}
    \nu_t=\bar X_t'
\end{equation*}
belongs to the kernel of the linearized operator. Moreover,
differentiating \eqref{cond1} along the autonomous periodic orbit
yields
\begin{equation}
\label{eq:tangent-direct}
    \nabla^2(\nabla\!\cdot b)(\bar X_t)\bar X_t'=0.
\end{equation}
Thus the potential term also vanishes along the pure tangential mode.
The long-time behavior is therefore determined by the slow modulation
of this neutral mode and its interaction with the transversally
hyperbolic directions, rather than by the degeneration of the
Poincar\'e constant alone.

Our analysis uses Floquet coordinates to separate the tangential and
transverse components, together with Fourier methods to control the
fractional kinetic operator on $[0,NT]$. Since these arguments use both
endpoint conditions, we restrict the analysis to most probable
transition paths.

The results exhibit a genuine Hurst-dependent dichotomy. For
$H>1/2$, under an appropriate effective-potential condition, the
second variation loses positive definiteness for all sufficiently
large $N$. Consequently, the periodically extended deterministic
trajectory ceases to be a local minimizer. Since the resulting
negative direction also satisfies the free-endpoint constraint, local
minimality is destroyed in the evolution-path problem as well.

By contrast, for $1/4<H\leq1/2$, the second variation remains positive
definite over arbitrarily many periods under the corresponding
small-noise conditions. The resulting estimate is anisotropic: the
tangential component is controlled by the appropriate
Cameron--Martin or fractional kinetic norm, whereas the transverse
component is controlled in $L^2$.

This anisotropic estimate yields positive definiteness of the second
variation but not coercivity with respect to a single path-space norm.
In infinite dimensions, the Banach--Alaoglu theorem provides only weak
compactness of the closed unit ball, while a normalized sequence may
converge weakly to zero. Hence strict positivity need not yield a
uniform spectral gap, and therefore does not by itself imply local
minimality.

\subsection{Loss of local minimality for $H>1/2$}

In this subsection, we show that, under an effective-potential
condition, the second variation loses positive definiteness on
sufficiently long time intervals. Consequently, the periodically
extended deterministic trajectory $\bar X$ ceases to be a local
minimizer of $J_{\epsilon,N}$.

\begin{lemma}[Scaling of the weighted fractional kinetic energy]
\label{lem:scaling-weighted-fractional-derivative}
Let $f:[0,NT]\to\mathbb R^n$ and define
$f_N(\tau):=f(N\tau)$ for $\tau\in[0,T]$. Then
\begin{equation}
\label{eq:scaling-weighted-fractional-derivative}
\begin{aligned}
&\int_0^{NT}
\left|
    t^\alpha D_{0^+}^\alpha
    \bigl(t^{-\alpha}f(t)\bigr)
\right|^2\,dt
\\
&\qquad=
N^{1-2\alpha}
\int_0^T
\left|
    \tau^\alpha D_{0^+}^\alpha
    \bigl(\tau^{-\alpha}f_N(\tau)\bigr)
\right|^2\,d\tau,
\end{aligned}
\end{equation}
whenever either side is finite.
\end{lemma}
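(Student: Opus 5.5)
The plan is a direct change of variables $t=N\tau$, using the homogeneity of the Riemann--Liouville operators under dilations. Set $G(t):=t^{-\alpha}f(t)$ on $(0,NT]$ and $\widetilde G(\tau):=\tau^{-\alpha}f_N(\tau)$ on $(0,T]$. Since $G(N\tau)=N^{-\alpha}\tau^{-\alpha}f(N\tau)$, we have the pointwise relation
\begin{equation*}
    G(N\tau)=N^{-\alpha}\,\widetilde G(\tau),\qquad \tau\in(0,T].
\end{equation*}

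\textbf{Step 1: dilation rule for the fractional integral.} First I establish, for a general $g\in L^1(0,NT)$ with $g_N(\tau):=g(N\tau)$, that
\begin{equation*}
    \bigl(I_{0^+}^{1-\alpha}g_N\bigr)(\tau)=N^{-(1-\alpha)}\bigl(I_{0^+}^{1-\alpha}g\bigr)(N\tau).
\end{equation*}
This follows from the definition of the Riemann--Liouville integral by the substitution $s=N\sigma$ in $\int_0^\tau(\tau-\sigma)^{-\alpha}g(N\sigma)\,d\sigma$.

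\textbf{Step 2: dilation rule for the fractional derivative.} For $0<\alpha<1$ we have $[\alpha]=0$, so $D_{0^+}^\alpha=\frac{d}{d\tau}I_{0^+}^{1-\alpha}$. Differentiating the identity of Step 1 in $\tau$ by the chain rule gives
\begin{equation*}
    \bigl(D_{0^+}^\alpha g_N\bigr)(\tau)=N^{\alpha}\bigl(D_{0^+}^\alpha g\bigr)(N\tau).
\end{equation*}
Here I must note that $I_{0^+}^{1-\alpha}g_N$ is absolutely continuous if and only if $I_{0^+}^{1-\alpha}g$ is, because the two functions differ only by an affine reparametrization of time and a constant factor. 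Consequently $g$ belongs to $I_{0^+}^\alpha(L^1)$ on $[0,NT]$ exactly when $g_N$ does on $[0,T]$. This equivalence is what justifies the clause ``whenever either side is finite'': the left-hand integrand is defined precisely when the right-hand one is.

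\textbf{Step 3: combine with the weights and integrate.} Apply Step 2 to $g=G$, so that $g_N=N^{-\alpha}\widetilde G$. This yields
\begin{equation*}
    (D_{0^+}^\alpha G)(N\tau)=N^{-2\alpha}\bigl(D_{0^+}^\alpha\widetilde G\bigr)(\tau).
\end{equation*}
Multiplying by the weight $t^\alpha=N^\alpha\tau^\alpha$ at $t=N\tau$ gives
\begin{equation*}
    \bigl(t^\alpha D_{0^+}^\alpha(t^{-\alpha}f)\bigr)(N\tau)=N^{-\alpha}\,\tau^\alpha D_{0^+}^\alpha\bigl(\tau^{-\alpha}f_N\bigr)(\tau).
\end{equation*}
Squaring produces the factor $N^{-2\alpha}$, and the substitution $t=N\tau$, $dt=N\,d\tau$, in the integral over $[0,NT]$ contributes a further factor $N$. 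Together these give \eqref{eq:scaling-weighted-fractional-derivative}, with both sides simultaneously finite or infinite by the equivalence noted in Step 2.

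\textbf{Main obstacle.} The computation itself is routine. The only delicate point is the functional setting, namely that the fractional derivative exists on one side exactly when it does on the other. Step 2 settles this, since dilation preserves both absolute continuity and $L^p$-integrability. Everything else is bookkeeping of the exponents: $-\alpha$ from the inner weight, $-\alpha$ from the derivative, $+\alpha$ from the outer weight, squared, and then $+1$ from the Jacobian, which gives $N^{1-2\alpha}$.
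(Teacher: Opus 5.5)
Your proposal is correct and follows the same route as the paper, which simply invokes the homogeneity identity $\bigl[t^\alpha D_{0^+}^\alpha(t^{-\alpha}f)\bigr](N\tau)=N^{-\alpha}\tau^\alpha D_{0^+}^\alpha(\tau^{-\alpha}f_N)(\tau)$ and then substitutes $t=N\tau$. You derive that identity explicitly from the dilation rule for $I_{0^+}^{1-\alpha}$, your exponent bookkeeping is right, and you also justify the ``whenever either side is finite'' clause, which the paper leaves implicit.
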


\begin{proof}
The homogeneity of the Riemann--Liouville derivative gives
\begin{equation*}
\left[
    t^\alpha D_{0^+}^\alpha
    \bigl(t^{-\alpha}f(t)\bigr)
\right](N\tau)
=
N^{-\alpha}
\tau^\alpha D_{0^+}^\alpha
\bigl(\tau^{-\alpha}f_N(\tau)\bigr).
\end{equation*}
The result follows by the change of variables $t=N\tau$.
\end{proof}

Thus, for slowly varying rescaled profiles, the kinetic contribution
is of order $N^{1-2\alpha}$, whereas the potential contribution is of
order $N$. Since $\alpha>0$, the potential term may dominate over long
time intervals.

Let
\begin{equation*}
    (\mathcal L\omega)_s
    :=
    \omega_s'
    -
    \nabla b(\bar X_s)\omega_s
\end{equation*}
be the linearized operator along $\bar X$. We seek periodic solutions
of
\begin{equation}
\label{eq:constant-response}
    \mathcal L\omega_s=\sigma_s q,
    \qquad
    \omega_0=\omega_T,
\end{equation}
where $q\in\mathbb R^n$ is independent of $s$. Such solutions will be
called constant-response modes.

Let $y$ be a nonzero periodic solution of
\begin{equation}
\label{eq:left-zero-mode}
    y_s'
    +
    \nabla b(\bar X_s)^Ty_s
    =
    0,
    \qquad
    y_0=y_T.
\end{equation}

\begin{lemma}[Fredholm characterization]
\label{lem:constant-response-fredholm}
Assume that $\bar X$ is transversally hyperbolic and that
\begin{equation}
\label{eq:zcr-nondegenerate}
    z_{\mathrm{CR}}
    :=
    \int_0^T\sigma_s^Ty_s\,ds
    \neq0.
\end{equation}
Define
\begin{equation}
\label{eq:ACR-def}
    \mathcal A_{\mathrm{CR}}
    :=
    \left\{
        q\in\mathbb R^n:
        z_{\mathrm{CR}}^Tq=0
    \right\}.
\end{equation}
Then \eqref{eq:constant-response} admits a periodic solution if and
only if $q\in\mathcal A_{\mathrm{CR}}$. For each such $q$, the solution
is unique modulo $\operatorname{span}\{\bar X'\}$.
\end{lemma}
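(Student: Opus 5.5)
The plan is to recast the periodic boundary value problem \eqref{eq:constant-response} as a Fredholm problem for the monodromy matrix and then apply the Fredholm alternative (Theorem~\ref{thm:fredholm_alternative}) in finite dimensions. By variation of constants with the principal fundamental matrix $\Phi(t,0)$ of \eqref{eq:variational-equation}, every solution of $\mathcal L\omega=\sigma q$ has the form
\begin{equation*}
    \omega_t=\Phi(t,0)\omega_0+\int_0^t\Phi(t,s)\sigma_s q\,ds .
\end{equation*}
The periodicity condition $\omega_T=\omega_0$ therefore becomes the linear system
\begin{equation*}
    \bigl(I-\Phi(T,0)\bigr)\omega_0=\int_0^T\Phi(T,s)\sigma_s q\,ds=:g(q).
\end{equation*}
Since $\bar X$ is transversally hyperbolic, the multiplier $1$ is simple. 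Hence $\ker(I-\Phi(T,0))=\operatorname{span}\{\bar X_0'\}$ is one-dimensional, and the kernel of the transpose is also one-dimensional.

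The key step is to identify the cokernel with $y_0$. If $y$ solves \eqref{eq:left-zero-mode}, then $\frac{d}{ds}\bigl(y_s^T\Phi(s,0)\bigr)=0$. This gives $y_s^T=y_0^T\Phi(s,0)^{-1}$, and also $y_0^T\Phi(T,0)=y_T^T=y_0^T$, so $y_0$ spans $\ker(I-\Phi(T,0))^T$. Conversely, any element of that kernel generates a periodic solution of \eqref{eq:left-zero-mode} by the same formula, and the nonzero periodic solution $y$ is unique up to scalar multiples. By the finite-dimensional Fredholm alternative, the system is solvable iff $y_0^Tg(q)=0$. Using $y_0^T\Phi(T,s)=y_0^T\Phi(T,0)\Phi(s,0)^{-1}=y_0^T\Phi(s,0)^{-1}=y_s^T$, we get
\begin{equation*}
    y_0^Tg(q)=\int_0^T y_s^T\sigma_s q\,ds=z_{\mathrm{CR}}^Tq,
\end{equation*}
which is exactly the condition $q\in\mathcal A_{\mathrm{CR}}$. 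The hypothesis \eqref{eq:zcr-nondegenerate} makes $\mathcal A_{\mathrm{CR}}$ a genuine hyperplane rather than all of $\mathbb R^n$.

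For uniqueness, the difference of two periodic solutions with the same $q$ solves the homogeneous variational equation with periodic data. Its initial value therefore lies in $\ker(I-\Phi(T,0))=\operatorname{span}\{\bar X_0'\}$, so the difference is a multiple of $\Phi(t,0)\bar X_0'=\bar X_t'$.

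The main point of care is the simplicity of the multiplier $1$. This must mean geometric multiplicity one for the kernel claim; algebraic simplicity, as in Definition~\ref{def:transversally-hyperbolic}, implies it. One must also check that the transpose kernel is spanned by the initial value of a periodic solution of the adjoint equation. Both follow from the block form \eqref{eq:R-block} with $R_\perp$ having no eigenvalue with zero real part, so I expect no real obstacle beyond this bookkeeping.
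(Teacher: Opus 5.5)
Your argument is correct and follows the same route as the paper. Solvability comes from the Fredholm alternative, with the cokernel spanned by the adjoint periodic solution $y$, which yields the condition $z_{\mathrm{CR}}^Tq=0$; uniqueness comes from $\ker\mathcal L_{\mathrm{per}}=\operatorname{span}\{\bar X'\}$. Your reduction to the monodromy system $(I-\Phi(T,0))\omega_0=g(q)$ only makes explicit, in finite dimensions, what the paper's two-line proof takes for granted: that the cokernel is exactly $\operatorname{span}\{y_0\}$ (so $y$ is unique up to scaling and $\mathcal A_{\mathrm{CR}}$ is well defined), and that $y_0^T\Phi(T,s)=y_s^T$.
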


\begin{proof}
By the Fredholm alternative, \eqref{eq:constant-response} is solvable
if and only if
\begin{equation*}
    \int_0^T y_s^T\sigma_s q\,ds
    =
    z_{\mathrm{CR}}^Tq
    =
    0.
\end{equation*}
Transversal hyperbolicity gives
\begin{equation*}
    \ker(\mathcal L_{\mathrm{per}})
    =
    \operatorname{span}\{\bar X'\},
\end{equation*}
which proves uniqueness modulo the tangential mode.
\end{proof}

The constant-response space is therefore defined by
\begin{equation}
\label{eq:WCR-quotient}
\begin{aligned}
\widetilde{\mathcal W}_{\mathrm{CR}}
:=
\Bigl\{
    [\omega]\in
    H^1_{\mathrm{per}}([0,T];\mathbb R^n)
    \big/
    \operatorname{span}\{\bar X'\}:\,
    \mathcal L\omega=\sigma q
    \text{ for some }
    q\in\mathcal A_{\mathrm{CR}}
\Bigr\}.
\end{aligned}
\end{equation}
Lemma~\ref{lem:constant-response-fredholm} implies that
$\widetilde{\mathcal W}_{\mathrm{CR}}$ is naturally isomorphic to
$\mathcal A_{\mathrm{CR}}$ and hence has dimension $n-1$.

For $[\omega]\in\widetilde{\mathcal W}_{\mathrm{CR}}$, define the
effective potential by
\begin{equation}
\label{eq:effective-quadratic-form}
    \mathcal V_{\mathrm{eff}}([\omega])
    :=
    -
    \int_0^T
    \omega_s^T
    \nabla^2(\nabla\!\cdot b)(\bar X_s)
    \omega_s\,ds.
\end{equation}
This definition is independent of the representative. Indeed,
\eqref{eq:tangent-direct} implies
\begin{equation*}
    \nabla^2(\nabla\!\cdot b)(\bar X_s)\bar X_s'=0,
\end{equation*}
so replacing $\omega$ by $\omega+c\bar X'$ does not change
\eqref{eq:effective-quadratic-form}.

We shall prove that if
\begin{equation}
\label{eq:positive-effective-potential}
    \mathcal V_{\mathrm{eff}}([\omega])>0
\end{equation}
for some
$[\omega]\in\widetilde{\mathcal W}_{\mathrm{CR}}$, then, for every
fixed $\epsilon>0$, the second variation of $J_{\epsilon,N}$ has a
negative direction for all sufficiently large $N$. The corresponding
perturbations are obtained by slowly modulating the constant-response
mode while imposing the endpoint conditions.

\begin{theorem}[Loss of local minimality on long time intervals for $H>1/2$]
\label{thm:H-greater-half-collapse}
Let $H>1/2$, and let $\bar X$ be a transversally hyperbolic
$T$-periodic solution of \eqref{nnoise} in $\mathbb R^n$, $n\geq2$.
Suppose that \eqref{cond1} holds and that the effective potential has
a positive direction:
\begin{equation*}
    \mathcal V_{\mathrm{eff}}([\omega^*])>0
\end{equation*}
for some
$[\omega^*]\in\widetilde{\mathcal W}_{\mathrm{CR}}$. Then, for every
$\epsilon>0$, there exists
$N_{\mathrm{crit}}(\epsilon)\in\mathbb N^+$ such that, whenever
$N>N_{\mathrm{crit}}(\epsilon)$, the periodic extension of $\bar X$ to
$[0,NT]$ is not a local minimizer of $J_{\epsilon,N}$ in
\begin{equation*}
    x_0+
    \mathcal H_H^\sigma\bigl([0,NT];\mathbb R^n\bigr).
\end{equation*}
\end{theorem}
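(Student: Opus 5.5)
The plan is to exhibit, for each fixed $\epsilon>0$ and all large $N$, an admissible perturbation $\eta^N$ with $\eta^N_0=\eta^N_{NT}=0$ and $\delta^2J_{\epsilon,N}(\bar X)[\eta^N,\eta^N]<0$. Since \eqref{cond1} makes the first variation vanish, the Taylor expansion \eqref{eq:J-taylor-variation} along $\lambda\eta^N$ then gives $J_{\epsilon,N}(\bar X+\lambda\eta^N)<J_{\epsilon,N}(\bar X)$ for small $\lambda$. Local minimality is therefore lost in both the fixed-endpoint and free-endpoint spaces. The construction slowly modulates the constant-response mode. Take a representative $\omega^*$ of $[\omega^*]$, with $\mathcal L\omega^*=\sigma q$ for some $q\in\mathcal A_{\mathrm{CR}}$ and $\omega^*$ extended $T$-periodically. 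Fix a nonzero $\chi\in C_c^\infty((0,T))$ and set
\begin{equation*}
    \eta^N_t:=\chi(t/N)\,\omega^*_t,\qquad t\in[0,NT].
\end{equation*}
Because $\chi$ vanishes near the endpoints, $\eta^N$ satisfies both boundary conditions and lies in $\mathcal H_H^\sigma([0,NT];\mathbb R^n)$.

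\emph{Potential term.} We have $\mathcal V_N(\eta^N)=-\int_0^{NT}\chi(t/N)^2\,\omega_t^{*T}\nabla^2(\nabla\!\cdot b)(\bar X_t)\omega^*_t\,dt$. The slow factor $\chi(t/N)^2$ is nearly constant on each period, so a Riemann-sum argument gives
\begin{equation*}
    \mathcal V_N(\eta^N)=\frac{N}{T}\,\mathcal V_{\mathrm{eff}}([\omega^*])\int_0^T\chi(\tau)^2\,d\tau+O(1),
\end{equation*}
which grows linearly in $N$ with a positive coefficient.

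\emph{Kinetic term.} Compute
\begin{equation*}
    \mathcal L\eta^N_t=\chi(t/N)\,\sigma_t q+N^{-1}\chi'(t/N)\,\omega^*_t.
\end{equation*}
From \eqref{eq:Ksigma_inverse_explicit}, $(K_H^\sigma)^{-1}$ applied to $\int_0^\cdot\mathcal L\eta^N$ is $t^\alpha D^\alpha_{0^+}\bigl(t^{-\alpha}\sigma_t^{-1}\mathcal L\eta^N_t\bigr)$, and the first piece of $\sigma^{-1}\mathcal L\eta^N$ is the slow profile $\chi(t/N)q$. This is why constant-response modes are used: the weight $\sigma$ cancels and leaves a function of $t/N$ only. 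Lemma~\ref{lem:scaling-weighted-fractional-derivative} then bounds the contribution of this piece to $\mathcal I_{H,N}$ by
\begin{equation*}
    N^{1-2\alpha}\int_0^T\bigl|\tau^\alpha D^\alpha_{0^+}(\tau^{-\alpha}\chi(\tau))\bigr|^2\,d\tau\;|q|^2=O(N^{1-2\alpha}).
\end{equation*}
The second piece is $N^{-1}\chi'(t/N)\,\sigma_t^{-1}\omega^*_t$, a slow envelope times a $T$-periodic smooth function, with amplitude $N^{-1}$.

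\emph{Main obstacle.} The hardest step is to show that the weighted fractional derivative of order $\alpha<1/2$ of this modulated periodic function has $L^2(0,NT)$ norm $o(N^{1/2})$. I would first expand $\sigma^{-1}\omega^*$ in a Fourier series and bound $\|D^\alpha(\text{smooth envelope}\cdot e^{2\pi ikt/T})\|_{L^2(0,NT)}$ by $C(1+|k|)^{\alpha}\sqrt N$ times $C^1$ norms of the envelope, handling the singular weight $t^{\pm\alpha}$ near $0$ via the support of $\chi$ away from $0$. With the prefactor $N^{-1}$ this gives $O(N^{-1/2})$. Cross terms are controlled by Cauchy--Schwarz, so $\mathcal I_{H,N}(\eta^N)=O(N^{1-2\alpha})$ with $\alpha=H-1/2>0$. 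Combining the two estimates,
\begin{equation*}
    \delta^2J_{\epsilon,N}(\bar X)[\eta^N,\eta^N]\le \frac{C}{\epsilon}N^{1-2\alpha}-cN+O(1)<0
\end{equation*}
for $N>N_{\mathrm{crit}}(\epsilon)$, which completes the plan.
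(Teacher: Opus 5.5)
Your proposal is correct and follows the paper's proof essentially step for step. You use the same test direction $\chi(t/N)\omega^*_t$, the same splitting of $\sigma^{-1}\mathcal L\eta^N$ into the slow piece $\chi(t/N)q$ (handled by Lemma~\ref{lem:scaling-weighted-fractional-derivative}) and the remainder $N^{-1}\chi'(t/N)\sigma_t^{-1}\omega^*_t$, and the same periodic averaging showing the potential grows linearly in $N$. The one real difference is your ``main obstacle'': the paper rescales the remainder to $[0,T]$ and applies $D^\alpha_{0^+}=I^{1-\alpha}_{0^+}\frac{d}{d\tau}$ crudely, and since differentiating the fast factor $\sigma^{-1}\omega^*$ evaluated at $N\tau$ costs only a factor $N$, this already gives an $O(N^{1-2\alpha})$ contribution; your Fourier/oscillation argument is valid and yields the sharper $O(N^{-1})$, but it is unnecessary because only $o(N)$ is needed.
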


\begin{proof}
Set $\alpha=H-\frac12\in(0,\frac12)$. For an admissible perturbation
$\eta$, write
\begin{equation}
\label{eq:IHN-def-H-greater-half}
    \mathcal I_H^N(\eta)
    :=
    \int_0^{NT}
    \left|
        s^\alpha D_{0^+}^{\alpha}
        \left[
            s^{-\alpha}\sigma_s^{-1}
            \bigl(
                \eta_s'
                -
                \nabla b(\bar X_s)\eta_s
            \bigr)
        \right]
    \right|^2\,ds
\end{equation}
and
\begin{equation}
\label{eq:VN-def-H-greater-half}
    \mathcal V^N(\eta)
    :=
    -
    \int_0^{NT}
    \eta_s^T
    \nabla^2(\nabla\!\cdot b)(\bar X_s)
    \eta_s\,ds.
\end{equation}
Then
\begin{equation*}
    \delta^2J_{\epsilon,N}(\bar X)[\eta,\eta]
    =
    \frac1\epsilon\mathcal I_H^N(\eta)
    -
    \frac{d_H}{2}\mathcal V^N(\eta).
\end{equation*}

Choose a representative $\omega^*$ of $[\omega^*]$ and set
\begin{equation}
\label{eq:positive-Veff}
    C_V
    :=
    \mathcal V_{\mathrm{eff}}([\omega^*])
    =
    -
    \int_0^T
    (\omega_s^*)^T
    \nabla^2(\nabla\!\cdot b)(\bar X_s)
    \omega_s^*\,ds
    >
    0.
\end{equation}
Since $\omega^*$ is a constant-response mode, there exists a constant
$q^*\in\mathbb R^n$ such that
\begin{equation}
\label{eq:constant-response-selected}
    \sigma_s^{-1}
    \bigl(
        (\omega_s^*)'
        -
        \nabla b(\bar X_s)\omega_s^*
    \bigr)
    =
    q^*.
\end{equation}

Choose a nonzero $\chi\in C^\infty([0,T])$ satisfying
\begin{equation*}
    \chi(0)=\chi(T)=\chi'(0)=\chi'(T)=0
\end{equation*}
and define
\begin{equation}
\label{eq:test-variation-etaN}
    \eta_N(s)
    :=
    \chi\left(\frac{s}{N}\right)\omega_s^*,
    \qquad
    s\in[0,NT].
\end{equation}
Then $\eta_N(0)=\eta_N(NT)=0$, so $\eta_N$ is admissible in both the
fixed-endpoint and free-endpoint perturbation spaces.

We first estimate the potential term. Since
\begin{equation*}
    F(s)
    :=
    (\omega_s^*)^T
    \nabla^2(\nabla\!\cdot b)(\bar X_s)
    \omega_s^*
\end{equation*}
is $T$-periodic and
$\int_0^T F(s)\,ds=-C_V$, periodic averaging gives
\begin{align*}
    \mathcal V^N(\eta_N)
    &=
    -
    \int_0^{NT}
    \chi^2\left(\frac{s}{N}\right)F(s)\,ds
    \\
    &=
    \frac{C_V}{T}
    \left(
        \int_0^T\chi^2(u)\,du
    \right)N
    +
    o(N).
\end{align*}
Hence, for some $C>0$ and all sufficiently large $N$,
\begin{equation}
\label{eq:potential-linear-positive}
    \mathcal V^N(\eta_N)\geq CN.
\end{equation}

For the kinetic term, \eqref{eq:constant-response-selected} and
\eqref{eq:test-variation-etaN} yield
\begin{equation*}
\begin{aligned}
&\sigma_s^{-1}
\bigl(
    \eta_N'(s)-\nabla b(\bar X_s)\eta_N(s)
\bigr)
\\
&\qquad=
\chi\left(\frac{s}{N}\right)q^*
+
\frac1N
\chi'\left(\frac{s}{N}\right)
\sigma_s^{-1}\omega_s^*.
\end{aligned}
\end{equation*}
The scaling estimate
\eqref{eq:scaling-weighted-fractional-derivative} gives
\begin{equation*}
\begin{aligned}
&\int_0^{NT}
\left|
s^\alpha D_{0^+}^{\alpha}s^{-\alpha}
\left[
    \chi\left(\frac{s}{N}\right)q^*
\right]
\right|^2\,ds
\leq
CN^{1-2\alpha}.
\end{aligned}
\end{equation*}

Set $P_*(s):=\sigma_s^{-1}\omega_s^*$. For the remaining term, scaling
gives
\begin{align*}
&\frac1{N^2}
\int_0^{NT}
\left|
s^\alpha D_{0^+}^{\alpha}s^{-\alpha}
\left[
    \chi'\left(\frac{s}{N}\right)P_*(s)
\right]
\right|^2\,ds
\\
&\qquad=
N^{-1-2\alpha}
\int_0^T
\tau^{2\alpha}
\left|
D_{0^+}^{\alpha}
\left[
    \tau^{-\alpha}\chi'(\tau)P_*(N\tau)
\right]
\right|^2\,d\tau.
\end{align*}
Because $\chi'(0)=0$, $P_*$ is smooth and periodic, and
$\alpha<\frac12$, the fractional integral estimate
$D_{0^+}^{\alpha}=I_{0^+}^{1-\alpha}\frac{d}{d\tau}$ gives
\begin{equation*}
    \int_0^T
    \tau^{2\alpha}
    \left|
    D_{0^+}^{\alpha}
    \left[
        \tau^{-\alpha}\chi'(\tau)P_*(N\tau)
    \right]
    \right|^2\,d\tau
    \leq
    CN^2.
\end{equation*}
Consequently,
\begin{equation}
\label{eq:kinetic-upper-H-greater-half}
    \mathcal I_H^N(\eta_N)
    \leq
    CN^{1-2\alpha}.
\end{equation}

Combining \eqref{eq:potential-linear-positive} and
\eqref{eq:kinetic-upper-H-greater-half}, we obtain
\begin{equation}
\label{eq:second-var-final-estimate}
    \delta^2J_{\epsilon,N}(\bar X)[\eta_N,\eta_N]
    \leq
    \frac{C}{\epsilon}N^{1-2\alpha}
    -
    CN.
\end{equation}
Since $\alpha>0$, the first term grows sublinearly, whereas the second
is negative and grows linearly. Thus
\begin{equation*}
    \delta^2J_{\epsilon,N}(\bar X)[\eta_N,\eta_N]<0
\end{equation*}
for all sufficiently large $N$. Therefore, $\bar X$ is not a local
minimizer of $J_{\epsilon,N}$ in
$x_0+\mathcal H_H^\sigma([0,NT];\mathbb R^n)$.
\end{proof}

\begin{remark}
The loss of local minimality is caused by the different long-time
scalings of the two parts of the second variation: the fractional
kinetic contribution is of order $N^{1-2\alpha}$, whereas the
destabilizing potential contribution is of order $N$. Analytically,
this reflects the homogeneity of the fractional Cameron--Martin
operator; probabilistically, it is consistent with the positive
long-range dependence of fractional Brownian motion for $H>1/2$.
\end{remark}

\subsection{Anisotropic positive definiteness in the Brownian regime
$H=1/2$}

We next consider the Brownian case $H=1/2$. The tangential zero mode
prevents uniform coercivity in the full Cameron--Martin norm as
$N\to\infty$. Nevertheless, the potential term vanishes in the
tangential direction, while transverse hyperbolicity provides uniform
control of the transverse component. Consequently, for sufficiently
small noise intensity, the second variation remains positive definite
for every $N$.

The resulting estimate is anisotropic: the tangential coefficient is
controlled through its homogeneous derivative norm, whereas the
transverse component is controlled in an $H^1$-type norm.

\begin{theorem}[Uniform anisotropic positive definiteness for $H=1/2$]
\label{thm:persistence-H-half}
Let $H=1/2$, and let $\bar X$ be a transversally hyperbolic
$T$-periodic solution of \eqref{nnoise} in $\mathbb R^n$, $n\geq2$.
Suppose that \eqref{cond1} holds. Then there exists $\epsilon_*>0$,
independent of $N$, such that, for every $N\in\mathbb N^+$ and every
$0<\epsilon<\epsilon_*$,
\begin{equation*}
    \delta^2J_{\epsilon,N}(\bar X)[\eta,\eta]>0
\end{equation*}
for every nonzero
\begin{equation*}
    \eta\in
    \mathcal H_{1/2,0}^\sigma
    \bigl([0,NT];\mathbb R^n\bigr).
\end{equation*}
More precisely, in the Floquet coordinates
\begin{equation*}
    P(s)^{-1}\eta_s
    =
    \begin{pmatrix}
        z_s^0\\
        z_s^\perp
    \end{pmatrix},
\end{equation*}
there exists $c_\epsilon>0$, independent of $N$, such that
\begin{equation}
\label{eq:second-var-lower-H-half}
    \delta^2J_{\epsilon,N}(\bar X)[\eta,\eta]
    \geq
    c_\epsilon
    \left(
        \|(z^0)'\|_{L^2(0,NT)}^2
        +
        \|z^\perp\|_{H^1(0,NT)}^2
    \right).
\end{equation}
\end{theorem}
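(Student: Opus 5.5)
The plan is to pass to the Floquet coordinates of Subsection~\ref{subsec:floquet-linearized-operator} and treat the tangential and transverse blocks separately. For $H=1/2$ the kinetic part is local:
\begin{equation*}
    \mathcal I_{1/2}^N(\eta)=\int_0^{NT}\bigl|\sigma_s^{-1}\mathcal L\eta_s\bigr|^2\,ds .
\end{equation*}
Write $\eta_s=P(s)z_s$. By \eqref{eq:L-floquet-coordinate}, $\sigma_s^{-1}\mathcal L\eta_s=\sigma_s^{-1}P(s)(z_s'-Rz_s)$. Since $P$ and $\sigma$ are $T$-periodic, continuous and invertible, there is a constant $c_P>0$, depending only on one period and hence not on $N$, such that $|\sigma_s^{-1}P(s)w|\geq c_P|w|$ for all $s\geq0$ and $w\in\mathbb R^n$.

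One technical point must be settled first. If some transverse multiplier is negative, a real logarithm $R$ may not exist with period $T$. In that case I would work with period $2T$: $P$ is then $2T$-periodic and real. This changes only constants and does not affect $N$-independence.

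With the block form \eqref{eq:R-block}, this gives
\begin{equation*}
    \mathcal I_{1/2}^N(\eta)\geq c_P^2\Bigl(\|(z^0)'\|_{L^2(0,NT)}^2+\|(z^\perp)'-R_\perp z^\perp\|_{L^2(0,NT)}^2\Bigr).
\end{equation*}
Because $\eta_0=\eta_{NT}=0$ and $P$ is invertible, both blocks satisfy $z(0)=z(NT)=0$.

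The transverse block is the key uniform estimate. I will show there is $\kappa>0$, independent of $N$, such that
\begin{equation*}
    \|(z^\perp)'-R_\perp z^\perp\|_{L^2(0,NT)}^2\geq\kappa\|z^\perp\|_{H^1(0,NT)}^2 .
\end{equation*}
To do this, set $g:=(z^\perp)'-R_\perp z^\perp$. Extend $z^\perp$ and $g$ by zero to $\mathbb R$; the extended $z^\perp$ lies in $H^1(\mathbb R)$ because of the Dirichlet conditions, and the identity $(z^\perp)'-R_\perp z^\perp=g$ still holds. Taking Fourier transforms gives $\hat z^\perp(\xi)=(i\xi-R_\perp)^{-1}\hat g(\xi)$. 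Every eigenvalue of $R_\perp$ has nonzero real part, so $\sup_{\xi}\|(i\xi-R_\perp)^{-1}\|<\infty$. Plancherel then yields $\|z^\perp\|_{L^2}\leq C\|g\|_{L^2}$, and $(z^\perp)'=g+R_\perp z^\perp$ gives the $H^1$ bound. None of these constants involve $N$.

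Next I treat the potential term using \eqref{eq:tangent-direct}. Write $M_s:=\nabla^2(\nabla\!\cdot b)(\bar X_s)$, which is symmetric and $T$-periodic. Since the first column of $P$ is $\bar X'$, \eqref{eq:tangent-direct} gives $M_s P(s)e_1=0$. Symmetry of $M_s$ then removes both the diagonal term in $z^0$ and the cross terms, leaving
\begin{equation*}
    \eta_s^TM_s\eta_s=(P(s)\tilde z^\perp_s)^TM_s(P(s)\tilde z^\perp_s),
\end{equation*}
where $\tilde z^\perp=(0,z^\perp)$. Hence
\begin{equation*}
    |\mathcal V^N(\eta)|\leq\mu\|z^\perp\|_{L^2(0,NT)}^2,
    \qquad
    \mu:=\sup_{s\in[0,T]}\|P(s)\|^2\|M_s\|,
\end{equation*}
and $\mu$ is independent of $N$.

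Combining these bounds gives
\begin{equation*}
    \delta^2J_{\epsilon,N}(\bar X)[\eta,\eta]\geq\frac{c_P^2}{\epsilon}\|(z^0)'\|_{L^2}^2+\Bigl(\frac{c_P^2\kappa}{\epsilon}-\frac{d_H\mu}{2}\Bigr)\|z^\perp\|_{H^1}^2 .
\end{equation*}
Choosing $\epsilon_*:=c_P^2\kappa/(d_H\mu)$ (and $\epsilon_*=\infty$ if $\mu=0$) makes the bracket at least $c_P^2\kappa/(2\epsilon)$ for $\epsilon<\epsilon_*$. This proves \eqref{eq:second-var-lower-H-half} with $c_\epsilon$ independent of $N$.

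Strict positivity follows directly. If the right-hand side of \eqref{eq:second-var-lower-H-half} vanishes, then $z^\perp=0$ and $z^0$ is constant. The condition $z^0(0)=0$ forces $z^0\equiv0$, so $\eta=0$.

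I expect the main obstacle to be the $N$-uniform transverse estimate. The Fourier/zero-extension argument above is what I would try first. If the possibly non-block-diagonalizable real Jordan structure of $R_\perp$ causes trouble, I would fall back on splitting $z^\perp$ into its stable and unstable components and using explicit exponential-dichotomy Green's functions on $[0,NT]$. For the potential term, the only delicate point is checking that the cross terms between $z^0$ and $z^\perp$ really vanish, and this is exactly where symmetry of the Hessian $M_s$ is used.
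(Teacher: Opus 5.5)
Your proposal is correct and follows essentially the same route as the paper. The shared steps are the Floquet splitting with a uniform lower bound on $\sigma^{-1}P$, the $N$-independent symbol estimate for $i\xi-R_\perp$ on the transverse block, and the removal of the tangential and mixed potential terms via $\nabla^2(\nabla\!\cdot b)(\bar X_s)\bar X_s'=0$ and symmetry of the Hessian. Your differences are harmless refinements: you extend $z^\perp$ by zero and use the Fourier transform on $\mathbb R$, where the paper treats $z^\perp$ as $NT$-periodic and uses Fourier series on $\mathbb T_{NT}$, and you flag the possible need for a $2T$-periodic real Floquet factor, which the paper passes over.
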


\begin{proof}
Let
\begin{equation*}
    z_s
    :=
    P(s)^{-1}\eta_s
    =
    \begin{pmatrix}
        z_s^0\\
        z_s^\perp
    \end{pmatrix},
\end{equation*}
where $P$ is a $T$-periodic Floquet matrix whose first column is
$\bar X'$. In these coordinates,
\begin{equation*}
    \mathcal L\eta_s
    =
    P(s)(z_s'-Rz_s),
    \qquad
    R=
    \begin{pmatrix}
        0&0\\
        0&R_\perp
    \end{pmatrix},
\end{equation*}
and transverse hyperbolicity implies
\begin{equation*}
    \sigma(R_\perp)\cap i\mathbb R=\varnothing.
\end{equation*}

For $H=1/2$, the second variation is
\begin{equation}
\label{eq:second-variation-H-half}
    \delta^2J_{\epsilon,N}(\bar X)[\eta,\eta]
    =
    \frac1\epsilon\mathcal I_{1/2,N}(\eta)
    -
    \frac12\mathcal V_N(\eta),
\end{equation}
where
\begin{equation}
\label{eq:kinetic-H-half}
    \mathcal I_{1/2,N}(\eta)
    :=
    \int_0^{NT}
    \left|
        \sigma_s^{-1}
        \bigl(
            \eta_s'-\nabla b(\bar X_s)\eta_s
        \bigr)
    \right|^2\,ds
\end{equation}
and
\begin{equation}
\label{eq:potential-H-half}
    \mathcal V_N(\eta)
    :=
    -
    \int_0^{NT}
    \eta_s^T
    \nabla^2(\nabla\!\cdot b)(\bar X_s)
    \eta_s\,ds.
\end{equation}

The boundedness and uniform invertibility of $\sigma_s$ and $P(s)$
give
\begin{equation}
\label{eq:kinetic-lower-bound-H-half}
\begin{aligned}
    \mathcal I_{1/2,N}(\eta)
    \geq
    C
    \left(
        \|(z^0)'\|_{L^2(0,NT)}^2
        +
        \|(z^\perp)'-R_\perp z^\perp\|_{L^2(0,NT)}^2
    \right),
\end{aligned}
\end{equation}
where $C>0$ is independent of $N$.

Since $\sigma(R_\perp)\cap i\mathbb R=\varnothing$, there exists
$C>0$ such that
\begin{equation*}
    |(i\xi I-R_\perp)y|^2
    \geq
    C(1+\xi^2)|y|^2,
    \qquad
    \xi\in\mathbb R,\quad
    y\in\mathbb C^{n-1}.
\end{equation*}
Moreover, the endpoint conditions imply
$z^\perp(0)=z^\perp(NT)=0$, so $z^\perp$ can be viewed as an
$H^1$-periodic function on $[0,NT]$. The Fourier multiplier estimate
therefore yields
\begin{equation}
\label{eq:fourier-transverse-bound}
    \|(z^\perp)'-R_\perp z^\perp\|_{L^2(0,NT)}^2
    \geq
    C\|z^\perp\|_{H^1(0,NT)}^2,
\end{equation}
with $C$ independent of $N$. Combining
\eqref{eq:kinetic-lower-bound-H-half} and
\eqref{eq:fourier-transverse-bound}, we obtain
\begin{equation}
\label{eq:I-lower-bound-final}
    \mathcal I_{1/2,N}(\eta)
    \geq
    C
    \left(
        \|(z^0)'\|_{L^2(0,NT)}^2
        +
        \|z^\perp\|_{H^1(0,NT)}^2
    \right).
\end{equation}

Set
\begin{equation*}
    M(s)
    :=
    \nabla^2(\nabla\!\cdot b)(\bar X_s).
\end{equation*}
By \eqref{eq:tangent-direct},
\begin{equation*}
    M(s)\bar X_s'=0.
\end{equation*}
Since the first column of $P(s)$ is $\bar X_s'$, the tangential and
mixed terms vanish from the potential energy. Hence
\begin{equation}
\label{eq:potential-bound-H-half}
    |\mathcal V_N(\eta)|
    \leq
    C\|z^\perp\|_{L^2(0,NT)}^2
    \leq
    C\|z^\perp\|_{H^1(0,NT)}^2,
\end{equation}
where $C$ is independent of $N$.

Substituting \eqref{eq:I-lower-bound-final} and
\eqref{eq:potential-bound-H-half} into
\eqref{eq:second-variation-H-half} gives
\begin{align*}
    \delta^2J_{\epsilon,N}(\bar X)[\eta,\eta]
    \geq{}&
    \frac{C}{\epsilon}
    \|(z^0)'\|_{L^2(0,NT)}^2
    \\
    &+
    \left(
        \frac{C}{\epsilon}-C
    \right)
    \|z^\perp\|_{H^1(0,NT)}^2.
\end{align*}
Choosing $\epsilon_*>0$ sufficiently small proves
\eqref{eq:second-var-lower-H-half}, with $c_\epsilon>0$ independent of
$N$.

If the second variation vanishes, then
$z^\perp=0$ and $(z^0)'=0$. Since $z^0(0)=0$, it follows that
$z^0=0$, and hence $\eta=0$. Therefore, the second variation is
positive definite.
\end{proof}

\begin{remark}
The estimate is uniform in $N$ but anisotropic. Transverse
hyperbolicity gives uniform $H^1$ control of $z^\perp$, whereas the
tangential kinetic term controls only $(z^0)'$. Although the endpoint
conditions imply
\begin{equation*}
    \|z^0\|_{L^2(0,NT)}
    \leq
    \frac{NT}{\pi}
    \|(z^0)'\|_{L^2(0,NT)},
\end{equation*}
the constant grows with $N$. Hence
\eqref{eq:second-var-lower-H-half} does not provide uniform coercivity
with respect to the full Cameron--Martin norm as $N\to\infty$.
\end{remark}
\subsection{Anisotropic Positive Definiteness for $1/4<H<1/2$}

We finally consider the regime $1/4<H<1/2$.
The main difficulty is the weighted fractional integral in the kinetic
term. We first establish a uniform lower bound for this operator and then
combine it with the Floquet decomposition of the linearized equation.

For $L>0$, define
\begin{equation*}
    (T_\alpha f)(s)
    :=
    s^{-\alpha}I_{0^+}^{\alpha}(s^\alpha f)(s),
    \qquad
    0<s<L.
\end{equation*}

\begin{lemma}[Weighted right-sided fractional derivative estimate]
\label{lem:weighted-right-fractional-derivative}
Let $0<\alpha<1/2$. There exists $C_{\alpha,T}>0$ such that
\begin{equation}
\label{eq:weighted-right-fractional-derivative}
    \left\|
        s^\alpha
        D_{T^-}^{\alpha}
        \bigl(s^{-\alpha}\varphi\bigr)
    \right\|_{L^2(0,T)}
    \leq
    C_{\alpha,T}
    \|\varphi\|_{H^\alpha(0,T)}
\end{equation}
for every $\varphi\in H^\alpha(0,T;\mathbb R^n)$.
\end{lemma}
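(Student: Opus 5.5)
The plan is to reduce the weighted operator to the unweighted right-sided derivative $D_{T^-}^\alpha$ plus a commutator term. For $\alpha<1/2$ the unweighted derivative is bounded from $H^\alpha(0,T)$ into $L^2(0,T)$, and this is a standard fact: on $(0,T)$ the Marchaud form of $D_{T^-}^\alpha$ is controlled by the Gagliardo seminorm together with a boundary term. Since $\alpha<1/2$, the space $H^\alpha$ coincides with $H^\alpha_0$, so functions in $H^\alpha$ need no boundary trace at $T$. We may also work componentwise, so take $n=1$.

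We work in Marchaud form: for $s<T$,
\begin{equation*}
    D_{T^-}^\alpha g(s)
    =
    \frac{1}{\Gamma(1-\alpha)}
    \left(
        \frac{g(s)}{(T-s)^\alpha}
        +
        \alpha\int_s^T\frac{g(s)-g(y)}{(y-s)^{1+\alpha}}\,dy
    \right).
\end{equation*}
Substituting $g=s^{-\alpha}\varphi$ and multiplying by $s^\alpha$ gives
\begin{equation*}
    s^\alpha D_{T^-}^\alpha(s^{-\alpha}\varphi)(s)
    =
    D_{T^-}^\alpha\varphi(s)
    +
    \frac{\alpha}{\Gamma(1-\alpha)}
    \int_s^T
    \bigl(1-(s/y)^{\alpha}\bigr)
    \frac{\varphi(y)}{(y-s)^{1+\alpha}}\,dy .
\end{equation*}
This identity follows from $g(s)-g(y)=s^{-\alpha}(\varphi(s)-\varphi(y))+\varphi(y)(s^{-\alpha}-y^{-\alpha})$.

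The first term is bounded in $L^2$ by $C\|\varphi\|_{H^\alpha}$. For the boundary piece, use the Hardy-type inequality $\|(T-s)^{-\alpha}\varphi\|_{L^2}\le C\|\varphi\|_{H^\alpha}$, which holds for $\alpha<1/2$ (fractional Hardy inequality). For the difference-quotient piece, apply Cauchy--Schwarz: with $w(y)=(y-s)^{-1/2-\alpha/2}$ weighting, one bounds it by the Gagliardo seminorm $\iint|\varphi(s)-\varphi(y)|^2|y-s|^{-1-2\alpha}\,dy\,ds$, using a splitting of the kernel $(y-s)^{-1-\alpha}=(y-s)^{-1/2-\alpha-\delta}(y-s)^{-1/2+\delta}$.

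The second (commutator) term is where I expect the main obstacle, namely the singular behaviour near $s=0$ and $y=s$. Near the diagonal, $1-(s/y)^\alpha\le \alpha (y-s)/s$, which cancels one power of the singularity. Near $s=0$ with $y\gg s$, the factor is bounded by $1$. I would split the $y$-integral into $y<2s$ and $y>2s$.

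On $y<2s$, the kernel is bounded by $C s^{-1}(y-s)^{-\alpha}$. The integral is then at most $C s^{-1}\int_s^{2s}(y-s)^{-\alpha}|\varphi(y)|\,dy$, an averaging operator of Hardy type. It is bounded on $L^2$ by Schur's test with homogeneous kernel $k(s,y)=s^{-1}(y/s-1)^{-\alpha}\mathbf 1_{s<y<2s}$, whose Schur integral $\int_1^2(u-1)^{-\alpha}u^{-1/2}\,du$ is finite.

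On $y>2s$, the kernel is bounded by $C y^{-1-\alpha}$. The term is then at most $C\int_{2s}^T y^{-1-\alpha}|\varphi(y)|\,dy$, which is the dual Hardy operator, homogeneous of degree $-1$ after the factor $(s/y)^0$. It is bounded on $L^2$ by Schur's test with weights $y^{-1/2}$, using $\int_0^{1/2}u^{-1/2}\,du$-type integrals. A more careful check is $\int_{2s}^\infty y^{-1-\alpha}y^{\alpha}\dots$. If this fails, I would instead bound it by $C s^{-\alpha}\cdot s^{\alpha}\int_{2s}^T y^{-1}|\varphi|\,dy$ and use $\int y^{-1}|\varphi|$ Hardy boundedness.

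Altogether the commutator is even bounded by $C\|\varphi\|_{L^2}$. Combining the two terms yields the estimate \eqref{eq:weighted-right-fractional-derivative}, first for smooth $\varphi$ and then for all of $H^\alpha$ by density.
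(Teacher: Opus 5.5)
Your Marchaud decomposition $s^\alpha D_{T^-}^\alpha(s^{-\alpha}\varphi)=D_{T^-}^\alpha\varphi+R_\alpha\varphi$, with commutator kernel $(1-(s/y)^\alpha)(y-s)^{-1-\alpha}$, is exactly the paper's. However, your estimate of $R_\alpha$ is wrong, and the claim that $R_\alpha\varphi$ is bounded by $C\|\varphi\|_{L^2}$ is false. The kernel is homogeneous of degree $-1-\alpha$, not $-1$.

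On $s<y<2s$, your majorant $Cs^{-1}(y-s)^{-\alpha}$ equals $Cs^{-1-\alpha}(y/s-1)^{-\alpha}$. The factor $s^{-\alpha}$ was lost when you wrote the Schur kernel as $s^{-1}(y/s-1)^{-\alpha}$. On $y>2s$, the majorant $Cy^{-1-\alpha}$ is again of degree $-1-\alpha$, so this is not the dual Hardy operator. Your fallback via $\int_{2s}^T y^{-1}|\varphi|\,dy$ is not a valid majorant either, since $y^{-1-\alpha}$ is not bounded by $Cy^{-1}$ near $y=0$.

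In fact $R_\alpha$ is unbounded on $L^2(0,T)$. Take $\varphi=\mathbf 1_{(h,2h)}$ and $s<h/2$. On the support, $1-(s/y)^\alpha\geq 1-2^{-\alpha}$ and $y-s\leq 2h$, so $R_\alpha\varphi(s)\geq c\,h^{-\alpha}$. Hence $\|R_\alpha\varphi\|_{L^2}\geq c\,h^{1/2-\alpha}$, while $\|\varphi\|_{L^2}=h^{1/2}$.

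The missing ingredient is the fractional Hardy inequality at the left endpoint. If you let the weight act on $\varphi$, both of your pieces close:
on $s<y<2s$, use $s^{-\alpha}\leq 2^\alpha y^{-\alpha}$ to get a genuine degree $-1$ Schur kernel acting on $y^{-\alpha}|\varphi|$;
on $y>2s$, you get the dual Hardy operator applied to $y^{-\alpha}|\varphi|$.
This gives $\|R_\alpha\varphi\|_{L^2}\leq C\|y^{-\alpha}\varphi\|_{L^2}$, which is precisely the paper's intermediate bound (obtained there via $r=su$ and Minkowski's inequality). You then need $\|s^{-\alpha}\varphi\|_{L^2(0,T)}\leq C\|\varphi\|_{H^\alpha(0,T)}$ for $\alpha<1/2$, which the paper takes from Dyda. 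This is the same kind of inequality you already used at $s=T$ for the term $\varphi(s)(T-s)^{-\alpha}$, and the counterexample above shows it cannot be avoided.

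A smaller point: your Cauchy--Schwarz splitting for the difference-quotient part of $D_{T^-}^\alpha\varphi$ only controls it by the Gagliardo seminorm of order $\alpha+\delta$. Lowering the order to $\alpha$ forces $\delta\leq 0$, and then the companion factor $\int_s^T(y-s)^{-1+2\delta}\,dy$ diverges. The bound $\|D_{T^-}^\alpha\varphi\|_{L^2}\leq C\|\varphi\|_{H^\alpha}$ is true. It comes from extending $\varphi$ by zero past $T$ (allowed since $\alpha<1/2$) and using the Fourier symbol of the right-sided Weyl derivative; the paper simply quotes it as standard.
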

    
\begin{proof}
It suffices to consider
$\varphi\in C_c^\infty(0,T;\mathbb R^n)$, since
$C_c^\infty(0,T)$ is dense in $H^\alpha(0,T)$ for $\alpha<1/2$.
By the Marchaud representation of the right-sided
Riemann--Liouville derivative \cite{Samko1993},
\begin{equation}
\label{eq:weighted-right-derivative-decomposition}
    s^\alpha
    D_{T^-}^{\alpha}
    \bigl(s^{-\alpha}\varphi\bigr)
    =
    D_{T^-}^{\alpha}\varphi
    +
    R_\alpha\varphi,
\end{equation}
where
\begin{equation*}
    R_\alpha\varphi(s)
    =
    \frac{\alpha}{\Gamma(1-\alpha)}
    \int_s^T
    \frac{1-(s/r)^\alpha}
         {(r-s)^{1+\alpha}}
    \varphi(r)\,dr.
\end{equation*}
The standard Riemann--Liouville characterization of fractional Sobolev
spaces gives
\begin{equation}
\label{eq:right-derivative-Halpha-bound}
    \|D_{T^-}^{\alpha}\varphi\|_{L^2(0,T)}
    \leq
    C_{\alpha,T}
    \|\varphi\|_{H^\alpha(0,T)}.
\end{equation}

Setting $r=su$ and applying Minkowski's inequality, we obtain
\begin{align*}
    \|R_\alpha\varphi\|_{L^2(0,T)}
    &\leq
    C_\alpha
    \left[
        \int_1^\infty
        \frac{1-u^{-\alpha}}
             {(u-1)^{1+\alpha}}
        u^{\alpha-\frac12}\,du
    \right]
    \|s^{-\alpha}\varphi\|_{L^2(0,T)}
    \\
    &\leq
    C_\alpha
    \|s^{-\alpha}\varphi\|_{L^2(0,T)},
\end{align*}
since the integral in brackets is finite. Moreover,
\cite[Eq.~(17)]{Dyda2004FractionalHardy}, applied with
$D=(0,T)$, $p=2$, and parameter $2\alpha$, gives
\begin{equation*}
    \|s^{-\alpha}\varphi\|_{L^2(0,T)}
    \leq
    C_{\alpha,T}
    \|\varphi\|_{H^\alpha(0,T)}.
\end{equation*}
Therefore,
\begin{equation*}
    \|R_\alpha\varphi\|_{L^2(0,T)}
    \leq
    C_{\alpha,T}
    \|\varphi\|_{H^\alpha(0,T)}.
\end{equation*}
Combining this estimate with
\eqref{eq:weighted-right-derivative-decomposition} and
\eqref{eq:right-derivative-Halpha-bound} proves
\begin{equation*}
    \left\|
        s^\alpha
        D_{T^-}^{\alpha}
        \bigl(s^{-\alpha}\varphi\bigr)
    \right\|_{L^2(0,T)}
    \leq
    C_{\alpha,T}
    \|\varphi\|_{H^\alpha(0,T)}.
\end{equation*}
The general case follows by density.
\end{proof}

\begin{lemma}[Uniform fractional lower bound]
\label{lem:uniform-fractional-lower-bound}
There exists a constant $c_{\alpha,T}>0$, independent of $N$, such that
\begin{equation}
\label{eq:uniform-fractional-lower-bound}
    \|T_\alpha f\|_{L^2(0,NT)}
    \geq
    c_{\alpha,T}
    \|f\|_{H^{-\alpha}(\mathbb T_{NT})}
\end{equation}
for every $N\in\mathbb N^+$ and every
$f\in L^2(0,NT;\mathbb R^n)$.
\end{lemma}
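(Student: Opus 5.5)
The plan is to prove \eqref{eq:uniform-fractional-lower-bound} by duality, using the adjoint of $T_\alpha$ and a version of Lemma~\ref{lem:weighted-right-fractional-derivative} whose constant is uniform in the interval length. Set $L:=NT$. On $L^2(0,L)$ the adjoint of $T_\alpha$ is
\[
    T_\alpha^{*}g(s)=s^{\alpha}I_{L^-}^{\alpha}\bigl(s^{-\alpha}g\bigr)(s),
\]
and its left inverse on sufficiently regular functions is
\[
    S_\alpha\varphi:=s^{\alpha}D_{L^-}^{\alpha}\bigl(s^{-\alpha}\varphi\bigr).
\]
For $f\in L^2(0,L)$ and a smooth $L$-periodic test function $\varphi$, I would write
\[
    \langle f,\varphi\rangle_{L^2(0,L)}
    =\langle f,T_\alpha^{*}S_\alpha\varphi\rangle
    =\langle T_\alpha f,S_\alpha\varphi\rangle ,
\]
which gives
\[
    |\langle f,\varphi\rangle|\le\|T_\alpha f\|_{L^2(0,L)}\|S_\alpha\varphi\|_{L^2(0,L)}.
\]
Since $\|f\|_{H^{-\alpha}(\mathbb T_L)}=\sup\{\langle f,\varphi\rangle:\|\varphi\|_{H^\alpha(\mathbb T_L)}\le1\}$, the lemma follows with $c_{\alpha,T}=C^{-1}$ once I show
\begin{equation*}
    \|S_\alpha\varphi\|_{L^2(0,L)}\le C\|\varphi\|_{H^\alpha(\mathbb T_L)},
\end{equation*}
with $C$ independent of $N$. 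The identity $T_\alpha^{*}S_\alpha\varphi=\varphi$ needs justification, because $\varphi$ need not vanish at $s=L$. For $\alpha<1/2$, the function $s^{-\alpha}\varphi$ lies in $I^\alpha_{L^-}(L^2)$, so the composition is legitimate, and I would check this by density of smooth functions.

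The uniform bound is the core of the proof. I would redo the proof of Lemma~\ref{lem:weighted-right-fractional-derivative} while tracking scale dependence. First, apply the same Marchaud splitting
\[
    S_\alpha\varphi=D_{L^-}^\alpha\varphi+R_\alpha\varphi .
\]
The remainder bound $\|R_\alpha\varphi\|_{L^2}\le C_\alpha\|s^{-\alpha}\varphi\|_{L^2(0,L)}$ came from the substitution $r=su$, and it is dilation invariant, so its constant does not depend on $L$. Second, write the Marchaud form of $D_{L^-}^\alpha\varphi$: the boundary term $\varphi(s)(L-s)^{-\alpha}/\Gamma(1-\alpha)$ plus $\frac{\alpha}{\Gamma(1-\alpha)}\int_s^L\frac{\varphi(s)-\varphi(r)}{(r-s)^{1+\alpha}}\,dr$. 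By Cauchy--Schwarz in $r$ against a weight $(r-s)^{-1+\delta}$, the integral term is bounded by the Gagliardo seminorm $[\varphi]_{H^\alpha(0,L)}$ with an $L$-independent constant. This seminorm is in turn controlled by the torus seminorm, which is equivalent to $\sum_k|k|^{2\alpha}|\hat\varphi_k|^2$ with constants independent of $L$.

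The remaining weighted terms $\|s^{-\alpha}\varphi\|_{L^2(0,L)}$ and $\|(L-s)^{-\alpha}\varphi\|_{L^2(0,L)}$ are handled by a fractional Hardy inequality on an interval of length $L\ge T$:
\begin{equation*}
    \int_0^L\frac{|\varphi(s)|^2}{\operatorname{dist}(s,\{0,L\})^{2\alpha}}\,ds
    \le C_\alpha\Bigl([\varphi]_{H^\alpha(0,L)}^2+L^{-2\alpha}\|\varphi\|_{L^2(0,L)}^2\Bigr),
\end{equation*}
valid for $\alpha<1/2$. I would derive it from the scale-invariant version on $(0,1)$ (the Dyda inequality cited above) by dilation. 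Since $L^{-2\alpha}\le T^{-2\alpha}$, the right-hand side is at most $C\|\varphi\|_{H^\alpha(\mathbb T_L)}^2$ uniformly in $N$.

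The main obstacle is exactly this uniformity. The constant in Lemma~\ref{lem:weighted-right-fractional-derivative} is stated only for a fixed interval, and a naive rescaling of $(0,NT)$ to $(0,T)$ mixes the homogeneous seminorm with the $L^2$ part in an $N$-dependent way. The plan avoids rescaling the full inhomogeneous norm: every term is bounded either by a dilation-invariant seminorm or by an $L^2$ term carrying the factor $L^{-2\alpha}\le T^{-2\alpha}$. I would also check carefully that the restriction of a periodic $H^\alpha(\mathbb T_L)$ function to $(0,L)$ has interval seminorm bounded by the periodic one. This holds because the Gagliardo double integral over $(0,L)^2$ is dominated by the one over a fundamental domain of the torus.
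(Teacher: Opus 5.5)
Your duality framework is sound: the adjoint $T_\alpha^*g=s^\alpha I^\alpha_{L^-}(s^{-\alpha}g)$, the remainder bound via $r=su$, the dilated Hardy inequality, and the interval-versus-torus seminorm comparison all check out. The gap is the step that bounds the Marchaud integral term of $D^\alpha_{L^-}\varphi$ by $[\varphi]_{H^\alpha(0,L)}$; as you justify it, it fails. Cauchy--Schwarz against $(r-s)^{-1+\delta}$ only gives
\[
\Bigl|\int_s^L\frac{\varphi(s)-\varphi(r)}{(r-s)^{1+\alpha}}\,dr\Bigr|^2
\le\frac{(L-s)^{\delta}}{\delta}\int_s^L\frac{|\varphi(s)-\varphi(r)|^2}{(r-s)^{1+2\alpha+\delta}}\,dr .
\]
This is a Gagliardo seminorm of order $\alpha+\delta/2$, times a factor growing like $L^{\delta}$. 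Landing exactly at order $\alpha$ would need the weight $(r-s)^{-1}$, which is not integrable at $r=s$. No other weight helps either. Any such pointwise bound also controls $\int_s^L|\varphi(s)-\varphi(r)|(r-s)^{-1-\alpha}\,dr$, whose $L^2$ norm is of Besov $B^{\alpha}_{2,1}$ type and is not bounded by $[\varphi]_{H^\alpha}$; lacunary Fourier series give a logarithmic divergence. The inequality you want is true, but only through cancellation between scales (e.g.\ the symbol $(-i\xi)^\alpha$ of the Marchaud derivative), not through a pointwise estimate.

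The simplest repair is to apply to the whole operator the same dilation you already use for the Hardy inequality. With $L=NT$ and $\varphi_N(\tau):=\varphi(N\tau)$, one checks
\[
(S_\alpha\varphi)(N\tau)=N^{-\alpha}\,\tau^{\alpha}D^{\alpha}_{T^-}\bigl(\tau^{-\alpha}\varphi_N\bigr)(\tau).
\]
Lemma~\ref{lem:weighted-right-fractional-derivative} then gives $\|S_\alpha\varphi\|_{L^2(0,NT)}^2\le C\bigl([\varphi]_{H^\alpha(0,NT)}^2+N^{-2\alpha}\|\varphi\|_{L^2(0,NT)}^2\bigr)$. This is bounded by $C'\|\varphi\|_{H^\alpha(\mathbb T_{NT})}^2$ uniformly in $N$, so the term-by-term Marchaud analysis becomes unnecessary. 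It also shows that the obstacle you anticipate from naive rescaling does not arise: the seminorm and the $L^2$ part are mixed only through the factor $N^{-2\alpha}\le1$, which has the favorable sign.

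The paper uses exactly this, on the dual side. It proves the estimate on $(0,T)$ by the same duality, then combines the exact homogeneity $\|T_\alpha f\|_{L^2(0,NT)}=N^{\alpha+1/2}\|T_\alpha f_N\|_{L^2(0,T)}$ with $\|f\|_{H^{-\alpha}(\mathbb T_{NT})}\le N^{\alpha+1/2}\|f_N\|_{H^{-\alpha}(\mathbb T_T)}$, which follows from $\langle\xi/N\rangle^{-\alpha}\le N^{\alpha}\langle\xi\rangle^{-\alpha}$. Once repaired, your argument is the dual form of that scaling proof.
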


\begin{proof}
We first prove the estimate on $(0,T)$. Set $g=T_\alpha f$. Then
\begin{equation*}
    f
    =
    s^{-\alpha}
    D_{0^+}^{\alpha}(s^\alpha g)
\end{equation*}
in the distributional sense. For
$\varphi\in H^\alpha(\mathbb T_T;\mathbb R^n)$, fractional integration
by parts gives
\begin{align*}
    \int_0^T f(s)^T\varphi(s)\,ds
    &=
    \int_0^T
    g(s)^T
    s^\alpha
    D_{T^-}^{\alpha}
    \bigl(s^{-\alpha}\varphi(s)\bigr)\,ds.
\end{align*}
By Lemma~\ref{lem:weighted-right-fractional-derivative},
\begin{equation*}
    \left|
        \int_0^T f(s)^T\varphi(s)\,ds
    \right|
    \leq
    C_{\alpha,T}
    \|T_\alpha f\|_{L^2(0,T)}
    \|\varphi\|_{H^\alpha(\mathbb T_T)}.
\end{equation*}
Taking the supremum over nonzero $\varphi$ gives
\begin{equation}
\label{eq:fractional-lower-fixed}
    \|T_\alpha f\|_{L^2(0,T)}
    \geq
    c_{\alpha,T}
    \|f\|_{H^{-\alpha}(\mathbb T_T)}.
\end{equation}

For $f\in L^2(0,NT)$, define
\begin{equation*}
    f_N(\tau):=f(N\tau),
    \qquad
    0<\tau<T.
\end{equation*}
The scaling property of $T_\alpha$ gives
\begin{equation}
\label{eq:Talpha-scaling}
    \|T_\alpha f\|_{L^2(0,NT)}
    =
    N^{\alpha+\frac12}
    \|T_\alpha f_N\|_{L^2(0,T)}.
\end{equation}
If
\begin{equation*}
    f(s)
    =
    \sum_{k\in\mathbb Z}
    \widehat f_k e^{2\pi iks/(NT)},
\end{equation*}
then
\begin{equation*}
    f_N(\tau)
    =
    \sum_{k\in\mathbb Z}
    \widehat f_k e^{2\pi ik\tau/T}.
\end{equation*}
Here and below, we use the notation
\begin{equation*}
    \langle\xi\rangle
    :=
    \bigl(1+|\xi|^2\bigr)^{1/2}.
\end{equation*}
Using
\begin{equation*}
    \left\langle\frac{\xi}{N}\right\rangle^{-\alpha}
    \leq
    N^\alpha\langle\xi\rangle^{-\alpha},
\end{equation*}
we obtain
\begin{equation}
\label{eq:negative-Sobolev-scaling}
    \|f\|_{H^{-\alpha}(\mathbb T_{NT})}
    \leq
    N^{\alpha+\frac12}
    \|f_N\|_{H^{-\alpha}(\mathbb T_T)}.
\end{equation}
Combining
\eqref{eq:fractional-lower-fixed}--%
\eqref{eq:negative-Sobolev-scaling} proves the result.
\end{proof}

\begin{lemma}[Uniform periodic multiplier estimate]
\label{lem:uniform-periodic-multiplier}
Let $r>\alpha+1/2$ and suppose that
\begin{equation*}
    A,A^{-1}
    \in
    H^r
    \bigl(
        \mathbb T_T;\mathbb R^{n\times n}
    \bigr).
\end{equation*}
Then there exists $c_A>0$, independent of $N$, such that
\begin{equation}
\label{eq:uniform-periodic-multiplier}
    \|Af\|_{H^{-\alpha}(\mathbb T_{NT})}
    \geq
    c_A
    \|f\|_{H^{-\alpha}(\mathbb T_{NT})}
\end{equation}
for every $N\in\mathbb N^+$ and every
$f\in H^{-\alpha}(\mathbb T_{NT};\mathbb R^n)$.
\end{lemma}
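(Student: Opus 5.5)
The natural route is duality. Write $\|\cdot\|_{-\alpha,N}:=\|\cdot\|_{H^{-\alpha}(\mathbb T_{NT})}$ and $\|\cdot\|_{\alpha,N}:=\|\cdot\|_{H^{\alpha}(\mathbb T_{NT})}$. Then
\[
\|Af\|_{-\alpha,N}=\sup_{\varphi\neq0}\frac{|\langle Af,\varphi\rangle|}{\|\varphi\|_{\alpha,N}}=\sup_{\varphi\neq0}\frac{|\langle f,A^T\varphi\rangle|}{\|\varphi\|_{\alpha,N}} .
\]
Since $A^{-1}$ exists, every test function $\psi$ can be written as $\psi=A^T\varphi$ with $\varphi=(A^{-1})^T\psi$. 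Hence
\[
\|f\|_{-\alpha,N}=\sup_{\psi\neq 0}\frac{|\langle f,\psi\rangle|}{\|\psi\|_{\alpha,N}}
\le \|(A^{-1})^T\|_{\mathcal B(H^\alpha(\mathbb T_{NT}))}\,\sup_{\varphi\neq0}\frac{|\langle f,A^T\varphi\rangle|}{\|\varphi\|_{\alpha,N}} .
\]
This gives \eqref{eq:uniform-periodic-multiplier} with
\[
c_A=\Bigl(\sup_N\|(A^{-1})^T\|_{\mathcal B(H^\alpha(\mathbb T_{NT}))}\Bigr)^{-1}.
\]
So the whole statement reduces to one claim: multiplication by a $T$-periodic matrix $B\in H^r(\mathbb T_T)$, with $r>\alpha+1/2$, is bounded on $H^\alpha(\mathbb T_{NT})$ with a bound independent of $N$. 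We apply this with $B=(A^{-1})^T$; by symmetry it also covers $A^T$, which is needed for $Af$ to lie in $H^{-\alpha}$. We argue first for smooth $f$ and extend by density.

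To prove the uniform multiplier bound, we expand $B$ in its Fourier series. Because $B$ is $T$-periodic, on $\mathbb T_{NT}$ it has
\[
B(s)=\sum_{m\in\mathbb Z}\widehat B_m e^{2\pi i m s/T}=\sum_{m}\widehat B_m e^{2\pi i (mN)s/(NT)},
\]
so only frequencies $k=mN$ of $\mathbb T_{NT}$ appear, with the same coefficients $\widehat B_m$ for every $N$. Then
\[
\widehat{(B\varphi)}_k=\sum_m \widehat B_m\,\widehat\varphi_{k-mN},
\]
and we must bound
\[
\sum_k \langle k/(NT)\rangle^{2\alpha}\Bigl|\sum_m \widehat B_m\widehat\varphi_{k-mN}\Bigr|^2 .
\]

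The key step is the Peetre inequality
\[
\langle \xi+\zeta\rangle^{\alpha}\le 2^{\alpha/2}\langle\xi\rangle^{\alpha}\langle\zeta\rangle^{\alpha},
\]
applied with $\xi=(k-mN)/(NT)$ and $\zeta=m/T$. This gives
\[
\langle k/(NT)\rangle^{\alpha}|\widehat{(B\varphi)}_k|\le C\sum_m \langle m/T\rangle^{\alpha}|\widehat B_m|\;\langle (k-mN)/(NT)\rangle^{\alpha}|\widehat\varphi_{k-mN}| .
\]
The sum over $m$ is a convolution along the sublattice $N\mathbb Z$. Young's inequality ($\ell^1*\ell^2\subset\ell^2$), applied separately on each residue class mod $N$, then yields
\[
\|B\varphi\|_{\alpha,N}\le C\Bigl(\sum_m\langle m/T\rangle^{\alpha}|\widehat B_m|\Bigr)\|\varphi\|_{\alpha,N}.
\]
It remains to check that the bracket is finite and independent of $N$. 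By Cauchy--Schwarz,
\[
\sum_m\langle m\rangle^{\alpha}|\widehat B_m|\le\Bigl(\sum_m\langle m\rangle^{2(\alpha-r)}\Bigr)^{1/2}\|B\|_{H^r(\mathbb T_T)},
\]
and the series converges exactly because $r>\alpha+1/2$. This is where the hypothesis is used.

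The main thing to verify carefully is that the Sobolev normalization on $\mathbb T_{NT}$ (weights $\langle k/(NT)\rangle$) and the orthogonality of exponentials are consistent with the $L^2(0,NT)$ pairing used in the duality. Both sides of the duality identity carry the same factor $NT$, so it cancels and no $N$-dependence enters. The density argument for general $f\in H^{-\alpha}$ is routine.
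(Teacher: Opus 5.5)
Your proposal is correct and follows essentially the paper's route. Both arguments expand the $T$-periodic coefficient in its Fourier series, observe that each mode shifts frequencies on $\mathbb T_{NT}$ by an $N$-independent amount, bound the multiplier norm by $\sum_m \langle m/T\rangle^{\alpha}|\widehat B_m|$ (finite because $r>\alpha+1/2$), and apply this bound to $A^{-1}$. The one difference is that you pass through duality and prove the multiplier bound for $(A^{-1})^T$ on $H^{\alpha}$, whereas the paper applies the same Peetre-type estimate directly on $H^{-\alpha}$ to $A^{-1}(Af)$; your write-up also makes explicit the Peetre and Young steps that the paper leaves implicit.
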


\begin{proof}
Write
\begin{equation*}
    A(s)
    =
    \sum_{m\in\mathbb Z}
    A_m e^{2\pi ims/T}.
\end{equation*}
Multiplication by the $m$-th Fourier mode shifts the physical frequency
by $2\pi m/T$, independently of $N$. Hence
\begin{equation*}
    \|Af\|_{H^{-\alpha}(\mathbb T_{NT})}
    \leq
    C
    \left(
        \sum_{m\in\mathbb Z}
        \|A_m\|
        \left\langle\frac{2\pi m}{T}\right\rangle^\alpha
    \right)
    \|f\|_{H^{-\alpha}(\mathbb T_{NT})}.
\end{equation*}
Since $r>\alpha+1/2$, the sum is bounded by
$C\|A\|_{H^r(\mathbb T_T)}$. Applying the same estimate to $A^{-1}$
gives \eqref{eq:uniform-periodic-multiplier}.
\end{proof}

\begin{theorem}[Uniform anisotropic positive definiteness for
$1/4<H<1/2$]
\label{thm:persistence-H-lesshalf}
Let $1/4<H<1/2$, and let $\bar X$ be a transversally hyperbolic
$T$-periodic solution of \eqref{nnoise} in $\mathbb R^n$, $n\geq2$,
with
\begin{equation*}
    \bar X_0=\bar X_T=x_0.
\end{equation*}
Assume that condition \eqref{cond1} holds. Let $P$ be a $T$-periodic
Floquet matrix whose first column is $\bar X'$, and suppose that
\begin{equation*}
    \sigma^{-1}P,\,
    P^{-1}\sigma
    \in
    H^r
    \bigl(
        \mathbb T_T;\mathbb R^{n\times n}
    \bigr)
\end{equation*}
for some $r>\alpha+1/2$. Then there exists $\epsilon_*>0$, independent
of $N$, such that, for every $N\in\mathbb N^+$ and every
$0<\epsilon<\epsilon_*$,
\begin{equation*}
    \delta^2J_{\epsilon,N}(\bar X)[\eta,\eta]>0
\end{equation*}
for every nonzero
\begin{equation*}
    \eta\in
    \mathcal H_{H,0}^{\sigma}
    \bigl([0,NT];\mathbb R^n\bigr).
\end{equation*}
\end{theorem}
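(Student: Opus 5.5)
The argument follows the proof of Theorem~\ref{thm:persistence-H-half}, with the local $L^2$ kinetic norm replaced by the $H^{-\alpha}$ control from Lemmas~\ref{lem:uniform-fractional-lower-bound} and \ref{lem:uniform-periodic-multiplier}. Set $\alpha=\frac12-H\in(0,\frac14)$. For $\eta\in\mathcal H_{H,0}^\sigma([0,NT];\mathbb R^n)$, write $\eta_s=P(s)z_s$ with $z=(z^0,z^\perp)$. By \eqref{eq:Ksigma_inverse_explicit}, \eqref{eq:L-floquet-coordinate} and \eqref{eq:R-block}, the kinetic part is
\begin{equation*}
    \mathcal I_{H,N}(\eta)=\bigl\|T_\alpha f\bigr\|_{L^2(0,NT)}^2,
    \qquad
    f_s:=\sigma_s^{-1}P(s)\bigl(z_s'-Rz_s\bigr).
\end{equation*}
The endpoint conditions give $z(0)=z(NT)=0$, since $P$ is periodic and invertible. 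We may therefore view $z$ as an $NT$-periodic function and work on $\mathbb T_{NT}$. Lemma~\ref{lem:uniform-fractional-lower-bound} gives $\|T_\alpha f\|_{L^2}\ge c\|f\|_{H^{-\alpha}(\mathbb T_{NT})}$. Lemma~\ref{lem:uniform-periodic-multiplier}, applied with $A=\sigma^{-1}P$ (whose inverse is $P^{-1}\sigma$), then yields, with constants independent of $N$,
\begin{equation*}
    \mathcal I_{H,N}(\eta)\ge c\Bigl(\|(z^0)'\|_{H^{-\alpha}(\mathbb T_{NT})}^2+\|(z^\perp)'-R_\perp z^\perp\|_{H^{-\alpha}(\mathbb T_{NT})}^2\Bigr).
\end{equation*}
Here I use that the $H^{-\alpha}$ norm of a vector splits as the sum over its components.

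Next comes the transverse Fourier multiplier bound. On $\mathbb T_{NT}$ the frequencies are $\xi_k=2\pi k/(NT)$. Since $\sigma(R_\perp)\cap i\mathbb R=\varnothing$, we have $|(i\xi-R_\perp)y|^2\ge C\langle\xi\rangle^2|y|^2$ uniformly in $\xi$, so
\begin{equation*}
    \|(z^\perp)'-R_\perp z^\perp\|_{H^{-\alpha}}^2\ge C\sum_k\langle\xi_k\rangle^{2-2\alpha}|\widehat{z^\perp_k}|^2=C\|z^\perp\|_{H^{1-\alpha}(\mathbb T_{NT})}^2\ge C\|z^\perp\|_{L^2(0,NT)}^2,
\end{equation*}
again with $C$ independent of $N$. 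For the potential, \eqref{eq:tangent-direct} and the fact that the first column of $P$ is $\bar X'$ remove the tangential and mixed terms, exactly as in \eqref{eq:potential-bound-H-half}. This gives $|\mathcal V_N(\eta)|\le C\|z^\perp\|_{L^2(0,NT)}^2$, uniformly in $N$, because $P$ and $\nabla^2(\nabla\cdot b)(\bar X)$ are periodic and bounded.

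Combining the two bounds,
\begin{equation*}
    \delta^2J_{\epsilon,N}(\bar X)[\eta,\eta]\ge\frac{c}{\epsilon}\|(z^0)'\|_{H^{-\alpha}}^2+\Bigl(\frac{c}{\epsilon}-C\Bigr)\|z^\perp\|_{H^{1-\alpha}}^2,
\end{equation*}
which is nonnegative once $\epsilon<\epsilon_*:=c/C$, and $\epsilon_*$ does not depend on $N$. For strict positivity, suppose the form vanishes. Then $z^\perp=0$ and $(z^0)'=0$ in $H^{-\alpha}$, so $z^0$ is constant. Since $z^0(0)=0$, we get $z^0=0$, hence $\eta=0$.

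I expect the main obstacle to be the first step, the legitimacy of the periodic-torus reduction. Lemma~\ref{lem:uniform-fractional-lower-bound} is stated for $f\in L^2$. In our setting $z'$ may be only a distribution of order $\alpha$, since $\eta\in\mathcal H_H$ has regularity below $H^{1}$. One must also check that the zero endpoint values justify extending $z$ periodically without creating boundary terms in the fractional integration by parts. I would handle this by first proving the estimate for smooth $\eta$ vanishing near $0$ and $NT$, where $f\in L^2$ and all the identities hold. I would then pass to general $\eta$ by density in $\mathcal H_{H,0}^\sigma$, using that both sides of the inequality are continuous in that norm. A second point to verify is that the constant in Lemma~\ref{lem:uniform-periodic-multiplier} is applied to $A^{-1}$ in the correct direction: we need $\|f\|\ge c\|P\,\cdot\|$-type control, and this follows from the upper bound $\|A^{-1}g\|_{H^{-\alpha}}\le C\|g\|_{H^{-\alpha}}$.
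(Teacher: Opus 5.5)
Your proposal is correct and follows essentially the same route as the paper: Floquet coordinates, the $N$-uniform $H^{-\alpha}(\mathbb T_{NT})$ lower bound from Lemmas~\ref{lem:uniform-fractional-lower-bound} and~\ref{lem:uniform-periodic-multiplier}, the transverse Fourier-multiplier bound giving $H^{1-\alpha}$ control of $z^\perp$, the potential bound from $\nabla^2(\nabla\!\cdot b)(\bar X_s)\bar X_s'=0$, and the smooth-case-plus-density reduction. Your remarks on the periodic extension (no boundary terms, since $z(0)=z(NT)=0$) and on applying the multiplier lemma to $A^{-1}=P^{-1}\sigma$ spell out points that the paper leaves implicit.
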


\begin{proof}
We first consider smooth admissible perturbations; the general case
follows by density. Write
\begin{equation*}
    \eta_s=P(s)z_s,
    \qquad
    z_s=
    \begin{pmatrix}
        z_s^0\\
        z_s^\perp
    \end{pmatrix}.
\end{equation*}
The Floquet representation gives
\begin{equation}
\label{eq:floquet-linearized-H-lesshalf}
    \eta_s'
    -
    \nabla b(\bar X_s)\eta_s
    =
    P(s)(z_s'-Rz_s),
\end{equation}
where
\begin{equation*}
    R=
    \begin{pmatrix}
        0&0\\
        0&R_\perp
    \end{pmatrix},
    \qquad
    \sigma(R_\perp)\cap i\mathbb R=\varnothing.
\end{equation*}

Set
\begin{equation*}
    M(s):=\sigma_s^{-1}P(s).
\end{equation*}
The kinetic part of the second variation is
\begin{equation*}
    \mathcal I_{H,N}(\eta)
    =
    \left\|
        T_\alpha
        \bigl(
            M(z'-Rz)
        \bigr)
    \right\|_{L^2(0,NT)}^2.
\end{equation*}
By Lemmas~\ref{lem:uniform-fractional-lower-bound} and
\ref{lem:uniform-periodic-multiplier},
\begin{equation}
\label{eq:kinetic-negative-Sobolev-H-lesshalf}
    \mathcal I_{H,N}(\eta)
    \geq
    C
    \|z'-Rz\|_{H^{-\alpha}(\mathbb T_{NT})}^2,
\end{equation}
where $C>0$ is independent of $N$.

The block structure of $R$ gives
\begin{align*}
    \|z'-Rz\|_{H^{-\alpha}}^2
    &=
    \|(z^0)'\|_{H^{-\alpha}}^2
    +
    \|(z^\perp)'-R_\perp z^\perp\|_{H^{-\alpha}}^2.
\end{align*}
Transverse hyperbolicity implies that
\begin{equation*}
    |(i\xi I-R_\perp)y|
    \geq
    c_\perp\langle\xi\rangle|y|,
    \qquad
    \xi\in\mathbb R.
\end{equation*}
Applying this estimate to each Fourier mode on $\mathbb T_{NT}$ yields
\begin{equation*}
    \|(z^\perp)'-R_\perp z^\perp\|_{H^{-\alpha}}^2
    \geq
    c_\perp^2
    \|z^\perp\|_{H^{1-\alpha}}^2.
\end{equation*}
Consequently,
\begin{equation}
\label{eq:anisotropic-kinetic-H-lesshalf}
\begin{aligned}
    \mathcal I_{H,N}(\eta)
    \geq{}&
    C_\parallel
    \|(z^0)'\|_{H^{-\alpha}(\mathbb T_{NT})}^2
    \\
    &+
    C_\perp
    \|z^\perp\|_{H^{1-\alpha}(\mathbb T_{NT})}^2,
\end{aligned}
\end{equation}
with constants independent of $N$.

Differentiating condition \eqref{cond1} along the periodic orbit gives
\begin{equation*}
    \nabla^2(\nabla\!\cdot b)(\bar X_s)\bar X_s'=0.
\end{equation*}
Thus the tangential and tangent--transverse contributions to the
potential term vanish. Since the remaining coefficient is continuous and
$T$-periodic,
\begin{equation}
\label{eq:potential-H-lesshalf}
    |\mathcal V_N(\eta)|
    \leq
    C_V
    \|z^\perp\|_{L^2(0,NT)}^2,
\end{equation}
where $C_V$ is independent of $N$.

Using
\begin{equation*}
    \delta^2J_{\epsilon,N}(\bar X)[\eta,\eta]
    =
    \frac1\epsilon\mathcal I_{H,N}(\eta)
    -
    \frac{d_H}{2}\mathcal V_N(\eta),
\end{equation*}
we obtain
\begin{align}
    \delta^2J_{\epsilon,N}(\bar X)[\eta,\eta]
    &\geq
    \frac{C_\parallel}{\epsilon}
    \|(z^0)'\|_{H^{-\alpha}(\mathbb T_{NT})}^2
    \notag\\
    &\quad+
    \left(
        \frac{C_\perp}{\epsilon}
        -
        \frac{d_HC_V}{2}
    \right)
    \|z^\perp\|_{L^2(0,NT)}^2.
    \label{eq:second-variation-H-lesshalf-final}
\end{align}

Choose
\begin{equation*}
    0<\epsilon_*
    <
    \frac{2C_\perp}{d_HC_V},
\end{equation*}
with the convention that $\epsilon_*=+\infty$ if $C_V=0$. Then the
right-hand side of
\eqref{eq:second-variation-H-lesshalf-final} is nonnegative.

If the second variation vanishes, then
\begin{equation*}
    z^\perp=0,
    \qquad
    (z^0)'=0.
\end{equation*}
Since $\eta(0)=\eta(NT)=0$ and $P$ is invertible, we have
$z(0)=z(NT)=0$. Hence $z^0=0$, and therefore $\eta=0$. Thus the second
variation is positive definite for every $N$, with a noise threshold
independent of $N$.
\end{proof}

\begin{remark}
The estimate \eqref{eq:second-variation-H-lesshalf-final} is anisotropic:
the tangential component is controlled through
$\|(z^0)'\|_{H^{-\alpha}}$, whereas the transverse component is controlled
in $H^{1-\alpha}$ and hence in $L^2$. This proves uniform positive
definiteness of the second variation, but does not by itself imply uniform
local minimality on all intervals $[0,NT]$.
\end{remark}

    \section{KAM Tori as Most Probable Evolution Paths under Fractional Noise}

The persistence of invariant tori under small deterministic
perturbations is a central conclusion of classical KAM theory; see
\cite{Kolmogorov1954,Arnold1963,Moser1962}. A variational notion of
KAM persistence for Hamiltonian systems driven by Brownian noise was
introduced in \cite{xinze2026most}, where trajectories on the
deformed deterministic tori were characterized as most probable
evolution paths.

As an application of the divergence-free case established above, we
extend this result to time-dependent fractional noise. No new KAM
iteration is required. The deterministic KAM theorem first produces
deformed invariant tori carrying quasi-periodic trajectories, and the
fractional Onsager--Machlup theory then shows that these trajectories
are most probable evolution paths. Since the Hamiltonian drift is
divergence-free, this variational conclusion holds for every noise
intensity, although the deterministic perturbation must remain within
the KAM regime.

Let $D\subset\mathbb R^n$ be a nonempty bounded domain and consider
the nearly integrable Hamiltonian
\begin{equation}
\label{eq:nearly-integrable-Hamiltonian}
    \mathscr H(I,\theta)
    =
    H_0(I)+P(I,\theta),
    \qquad
    (I,\theta)\in D\times\mathbb T^n.
\end{equation}
The corresponding fractional stochastic Hamiltonian system is
\begin{equation}
\label{eq:fractional-stochastic-KAM-system}
\left\{
\begin{aligned}
    d\theta_t
    &=
    \left[
        \omega(I_t)
        +
        \partial_I P(I_t,\theta_t)
    \right]dt
    +
    \sqrt{\epsilon}\,
    \sigma_\theta(t)\,dB_{\theta,t}^{H},
    \\
    dI_t
    &=
    -
    \partial_\theta P(I_t,\theta_t)\,dt
    +
    \sqrt{\epsilon}\,
    \sigma_I(t)\,dB_{I,t}^{H},
\end{aligned}
\right.
\end{equation}
where
\begin{equation*}
    \omega(I):=\partial_IH_0(I),
    \qquad
    \frac14<H<1.
\end{equation*}
Here $B_\theta^H$ and $B_I^H$ are independent $n$-dimensional
fractional Brownian motions with the same Hurst parameter, and
$\epsilon>0$ is the noise intensity. The angle equation is understood
on the universal covering space $\mathbb R^n$ and then projected onto
$\mathbb T^n$ modulo $2\pi\mathbb Z^n$.

With the state variables ordered as $(\theta,I)$, the deterministic
drift is
\begin{equation}
\label{eq:KAM-Hamiltonian-vector-field}
    b_{\mathscr H}(\theta,I)
    :=
    \begin{pmatrix}
        \partial_I\mathscr H(I,\theta)
        \\
        -\partial_\theta\mathscr H(I,\theta)
    \end{pmatrix}
    =
    \begin{pmatrix}
        \omega(I)+\partial_I P(I,\theta)
        \\
        -\partial_\theta P(I,\theta)
    \end{pmatrix}.
\end{equation}
By equality of mixed partial derivatives,
\begin{equation}
\label{eq:KAM-divergence-free}
\begin{aligned}
    \nabla_{(\theta,I)}\!\cdot b_{\mathscr H}
    &=
    \nabla_\theta\!\cdot
    \bigl(\partial_I\mathscr H\bigr)
    -
    \nabla_I\!\cdot
    \bigl(\partial_\theta\mathscr H\bigr)
    \\
    &=0.
\end{aligned}
\end{equation}
Hence the divergence correction in the fractional
Onsager--Machlup functional vanishes, and the functional reduces to
its nonnegative kinetic part. Its minimum is therefore attained
precisely when the Cameron--Martin residual vanishes. By
Corollary~\ref{cor: spatially constant divergence}, every
deterministic trajectory of
\begin{equation}
\label{eq:deterministic-nearly-integrable-KAM-system}
\left\{
\begin{aligned}
    \dot\theta_t
    &=
    \omega(I_t)+\partial_I P(I_t,\theta_t),
    \\
    \dot I_t
    &=
    -\partial_\theta P(I_t,\theta_t)
\end{aligned}
\right.
\end{equation}
is consequently a most probable evolution path of
\eqref{eq:fractional-stochastic-KAM-system}.

\begin{theorem}[KAM tori in the most probable evolution-path sense]
\label{thm:fractional-most-probable-KAM-tori}
Let $1/4<H<1$ and $n\geq2$, and consider
\eqref{eq:fractional-stochastic-KAM-system}. Assume that
$b_{\mathscr H}$ and the diffusion coefficients satisfy
Assumption~\ref{ass:A}.

Let $\tau>n-1$, set $\nu:=\tau+1$, and suppose that the following
conditions hold:
\begin{enumerate}
    \item There exists $l>2\nu>2n$ such that
    \begin{equation*}
        H_0\in C^l(D),
        \qquad
        P\in C^l(D\times\mathbb T^n).
    \end{equation*}

    \item The integrable Hamiltonian satisfies
    \begin{equation*}
        \det\partial_I^2H_0(I)\neq0,
        \qquad
        I\in D,
    \end{equation*}
    and
    \begin{equation*}
        \sup_{I\in D}
        \left|
            \bigl(\partial_I^2H_0(I)\bigr)^{-1}
        \right|
        <\infty.
    \end{equation*}

    \item For some $\gamma\in(0,1)$, define
    \begin{equation*}
        \Delta_\gamma^\tau
        :=
        \left\{
            \varpi\in\mathbb R^n:
            |\varpi\cdot k|
            \geq
            \frac{\gamma}{|k|_1^\tau},
            \quad
            k\in\mathbb Z^n\setminus\{0\}
        \right\},
    \end{equation*}
    where $|k|_1:=\sum_{j=1}^n|k_j|$. Set
    \begin{equation*}
        \gamma_*:=\gamma^{1/(l-2\nu)}
    \end{equation*}
    and
    \begin{equation*}
        D_\gamma
        :=
        \left\{
            I\in D:
            B_{\gamma_*}(I)\subset D,\;
            \omega(I)\in\Delta_\gamma^\tau
        \right\}.
    \end{equation*}

    \item The deterministic perturbation satisfies
    \begin{equation*}
        \|P\|_{C^l(D\times\mathbb T^n)}
        \leq
        \eta_*,
    \end{equation*}
    where $\eta_*>0$ is the threshold in the finite-smoothness KAM
    theorem used in \cite{xinze2026most}.
\end{enumerate}

Then, for every $I_0\in D_\gamma$, the unperturbed torus
\begin{equation*}
    \mathcal T_{I_0}^{0}
    :=
    \mathbb T^n\times\{I_0\}
\end{equation*}
persists, up to a small deformation, as an invariant KAM torus
$\mathcal T_{I_0}^{P}$ of
\eqref{eq:deterministic-nearly-integrable-KAM-system}.

Moreover, for every $x_0\in\mathcal T_{I_0}^{P}$ and every
$\epsilon>0$, the deterministic Hamiltonian trajectory starting from
$x_0$, restricted to $[0,T]$, is a global minimizer of the
corresponding fractional Onsager--Machlup functional. Hence it is a
most probable evolution path of
\eqref{eq:fractional-stochastic-KAM-system}. Accordingly, the
deterministic KAM tori persist under fractional stochastic
perturbations as families of most probable evolution paths.
\end{theorem}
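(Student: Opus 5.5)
The proof splits into a deterministic part and a variational part, and no new analytic estimate is needed. For the deterministic part, I would invoke the finite-smoothness KAM theorem used in \cite{xinze2026most} directly. Hypotheses 1--4 are exactly its hypotheses: $C^l$ regularity with $l>2\nu$, nondegeneracy of $\partial_I^2H_0$ with a bounded inverse, the Diophantine set $\Delta_\gamma^\tau$ with the shrunken domain $D_\gamma$, and smallness $\|P\|_{C^l}\le\eta_*$. The theorem then yields, for each $I_0\in D_\gamma$, an embedding $\Psi_{I_0}:\mathbb T^n\to\mathbb T^n\times D$ close to $\varphi\mapsto(\varphi,I_0)$. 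The torus $\mathcal T_{I_0}^P:=\Psi_{I_0}(\mathbb T^n)$ is invariant under the flow of \eqref{eq:deterministic-nearly-integrable-KAM-system}, and the flow there is conjugate to the linear flow $\varphi\mapsto\varphi+t\,\omega(I_0)$. This gives the first assertion. The only point to check is that the present ordering $(\theta,I)$ and sign conventions match those of \cite{xinze2026most}. Since that theorem concerns the deterministic system only, the noise plays no role here.

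For the variational part, I fix $x_0\in\mathcal T_{I_0}^P$ and lift the angle to the universal cover $\mathbb R^n$, so that \eqref{eq:fractional-stochastic-KAM-system} becomes an SDE on $\mathbb R^{2n}$ of the form \eqref{sde}. Its drift is $b_{\mathscr H}$, and its diffusion is $\operatorname{diag}(\sigma_\theta,\sigma_I)$ driven by a $2n$-dimensional fBm with independent components. Assumption~\ref{ass:A} is imposed on $b_{\mathscr H}$ and on the diffusion coefficients, so Theorem~\ref{thm:OM-n-dimensional} applies. The Onsager--Machlup functional therefore has the form \eqref{om-functional} on $x_0+\mathcal H_H^\sigma([0,T];\mathbb R^{2n})$.

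By \eqref{eq:KAM-divergence-free}, $\nabla\cdot b_{\mathscr H}\equiv0$. So we are in the situation of Corollary~\ref{cor: spatially constant divergence} with $C_t\equiv0$, and $J_\epsilon=\epsilon^{-1}I_H\ge0$. The deterministic trajectory $\bar X$ starting at $x_0$ satisfies $I_H(\bar X)=0$, so it is a global minimizer for every $\epsilon>0$. By uniqueness for the Lipschitz ODE, it is the unique one. This is the most probable evolution-path property. Because $\mathcal T_{I_0}^P$ is invariant, $\bar X_t\in\mathcal T_{I_0}^P$ for all $t\in[0,T]$. Letting $x_0$ range over the torus gives the family statement, and the fixed-endpoint version follows from the inclusion $\mathcal H_{H;x_0,x_T}^{\sigma,T}\subset x_0+\mathcal H_H^\sigma$.

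The main obstacle is admissibility, not the variational argument. Assumption~\ref{ass:A} requires $b\in C_b^{1,3}$ on the whole of $\mathbb R^{2n}$, while $b_{\mathscr H}$ lives on $D\times\mathbb T^n$, and $\omega$ need not be globally bounded. I would handle this by noting that the theorem assumes Assumption~\ref{ass:A} outright. If an extension is needed, I would modify $b_{\mathscr H}$ outside a neighbourhood of the compact invariant torus by a cutoff. The cutoff should preserve the Hamiltonian form $J\nabla\tilde{\mathscr H}$ with $\tilde{\mathscr H}=\chi\mathscr H$, so that the extended field is still divergence-free. This does not change $\bar X$, since $\bar X$ stays on the torus.

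Second, I would check that $\bar X-x_0\in\mathcal H_H^\sigma$. This holds because $\bar X$ is $C^1$ with $\bar X-x_0$ vanishing at $0$, and smooth functions vanishing at $0$ lie in $\mathcal H_H$. The $\sigma$-weighting preserves this by the isomorphism noted after \eqref{eq:Ksigma_inverse_explicit}.
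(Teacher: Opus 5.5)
Your proposal is correct and follows essentially the same route as the paper. The finite-smoothness KAM theorem of \cite{xinze2026most} gives $\mathcal T_{I_0}^P$, and $\nabla\cdot b_{\mathscr H}\equiv0$ together with Corollary~\ref{cor: spatially constant divergence} makes the deterministic trajectory a global minimizer for every $\epsilon>0$, which stays on the torus by invariance. Your admissibility checks (the divergence-free cutoff and $\bar X-x_0\in\mathcal H_H^\sigma$) are extra care the paper omits, since it simply takes Assumption~\ref{ass:A} as a hypothesis.
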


\begin{proof}
Under the stated regularity, non-degeneracy, Diophantine, and
smallness assumptions, the finite-smoothness KAM theorem used in
\cite{xinze2026most} yields the deformed invariant torus
$\mathcal T_{I_0}^{P}$ for every $I_0\in D_\gamma$.

By \eqref{eq:KAM-divergence-free}, the Hamiltonian drift has
identically vanishing divergence. Corollary~\ref{cor: spatially
constant divergence} therefore implies that, for every initial point
and every $\epsilon>0$, the corresponding deterministic Hamiltonian
trajectory is a global minimizer of the fractional
Onsager--Machlup functional.

If $x_0\in\mathcal T_{I_0}^{P}$, invariance of
$\mathcal T_{I_0}^{P}$ implies that this deterministic trajectory
remains on the torus. It is therefore a most probable evolution path
of \eqref{eq:fractional-stochastic-KAM-system}.
\end{proof}

\begin{remark}
The preceding theorem concerns persistence in the most probable
evolution-path sense. It neither asserts that sample paths of
\eqref{eq:fractional-stochastic-KAM-system} remain on the deterministic
KAM tori nor constructs random invariant tori. Rather, the
deterministic KAM tori persist as invariant sets of the perturbed
deterministic Hamiltonian system, and their trajectories are selected
as most probable evolution paths by the fractional
Onsager--Machlup functional.
\end{remark}

\subsection{Large deviations near the KAM tori}

For the small-noise asymptotics, define
\begin{equation*}
    \sigma_t
    :=
    \begin{pmatrix}
        \sigma_\theta(t) & 0
        \\
        0 & \sigma_I(t)
    \end{pmatrix}.
\end{equation*}
We work with the lifted process
\begin{equation*}
    X^\epsilon
    =
    (\theta^\epsilon,I^\epsilon)
\end{equation*}
on $\mathbb R^{2n}$ and set
\begin{equation*}
    \mathcal X
    :=
    C([0,T];\mathbb R^{2n}),
\end{equation*}
equipped with the uniform topology.

Recall that $\{X^\epsilon\}_{\epsilon>0}$ satisfies a large deviation
principle with speed $\epsilon^{-1}$ and rate function
$I_H:\mathcal X\to[0,\infty]$ if, for every Borel set
$A\subset\mathcal X$,
\begin{equation}
\label{eq:LDP-definition-Hamiltonian}
\begin{aligned}
    -\inf_{\phi\in A^\circ}I_H(\phi)
    &\leq
    \liminf_{\epsilon\to0}
    \epsilon\log\mathbb P(X^\epsilon\in A)
    \\
    &\leq
    \limsup_{\epsilon\to0}
    \epsilon\log\mathbb P(X^\epsilon\in A)
    \leq
    -\inf_{\phi\in\overline A}I_H(\phi).
\end{aligned}
\end{equation}
The rate function is called good if
\begin{equation*}
    \left\{
        \phi\in\mathcal X:
        I_H(\phi)\leq M
    \right\}
\end{equation*}
is compact for every $M<\infty$. Informally,
\begin{equation*}
    \mathbb P(X^\epsilon\approx\phi)
    \approx
    \exp\left\{
        -\frac{I_H(\phi)}{\epsilon}
    \right\}
\end{equation*}
on the logarithmic scale.

Applying the large deviation result of \cite{FanYuYuan2023} to the
present distribution-independent system gives the following theorem.
The small parameter in \cite{FanYuYuan2023} is denoted by $\delta$,
with noise magnitude $\delta^H$ and large-deviation scale
$\delta^{2H}$. Setting
\begin{equation*}
    \epsilon=\delta^{2H}
\end{equation*}
gives the noise magnitude $\sqrt{\epsilon}$ and the scale $\epsilon$
used here.

\begin{theorem}[Large deviations for the fractional stochastic Hamiltonian system]
\label{cor:fractional-Hamiltonian-LDP}
Let $1/2<H<1$, and let $X^\epsilon$ solve
\eqref{eq:fractional-stochastic-KAM-system} with
$X_0^\epsilon=x_0$. Suppose that $b_{\mathscr H}$ and $\sigma$
satisfy Assumption~\ref{ass:A}, with $\sigma_t$ nondegenerate for
every $t\in[0,T]$.

Then $\{X^\epsilon\}_{\epsilon>0}$ satisfies a large deviation
principle on $\mathcal X$ with speed $\epsilon^{-1}$ and good rate
function
\begin{equation}
\label{eq:Hamiltonian-rate-function}
    I_H(\phi)
    =
    \begin{cases}
        \displaystyle
        \frac12
        \int_0^T
        \left|
            (K_H^\sigma)^{-1}
            \left(
                \phi_\cdot-x_0
                -
                \int_0^\cdot
                b_{\mathscr H}(\phi_s)\,ds
            \right)(t)
        \right|^2\,dt,
        &
        \begin{aligned}
        &\phi_\cdot-x_0
        -
        \displaystyle\int_0^\cdot
        b_{\mathscr H}(\phi_s)\,ds
        \in\mathcal H_H^\sigma,
        \end{aligned}
        \\[4ex]
        +\infty,
        &\text{otherwise}.
    \end{cases}
\end{equation}
On its effective domain, $I_H$ coincides with the fractional kinetic
term of the Onsager--Machlup functional defined in
\eqref{eq:fractional-kinetic-term}.
\end{theorem}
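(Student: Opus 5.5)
The plan is to obtain the statement as a direct specialization of the large deviation principle of \cite{FanYuYuan2023}, followed by a reparametrization of the small parameter and an identification of the rate function with the kinetic term $I_H$ in \eqref{eq:fractional-kinetic-term}. No new probabilistic estimate should be needed.

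First, we rewrite \eqref{eq:fractional-stochastic-KAM-system} on the lift $\mathbb R^{2n}$ as an equation of the form \eqref{sde}:
\begin{equation*}
    dX_t^\epsilon = b_{\mathscr H}(X_t^\epsilon)\,dt + \sqrt{\epsilon}\,\sigma_t\,dB_t^H,
\end{equation*}
where $B^H=(B_\theta^H,B_I^H)$ is a $2n$-dimensional fractional Brownian motion with independent components. The drift does not depend on the law of $X^\epsilon$, so the system is a degenerate (distribution-independent) instance of the McKean--Vlasov framework in \cite{FanYuYuan2023}. We then check their hypotheses. Assumption~\ref{ass:A} gives $b_{\mathscr H}\in C_b^{1,3}$, hence global Lipschitz continuity and boundedness. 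The diffusion $\sigma$ is deterministic, $C^1$, bounded, and uniformly nondegenerate. The restriction $1/2<H<1$ is exactly the range covered there, where the stochastic integral is a Young/Wiener integral.

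Next we match the scales. In \cite{FanYuYuan2023} the noise is $\delta^H\sigma\,dB^H$ and the LDP has speed $\delta^{-2H}$. Setting $\epsilon=\delta^{2H}$ is a monotone bijection $\delta\downarrow0\iff\epsilon\downarrow0$. Consequently the $\liminf/\limsup$ bounds in \eqref{eq:LDP-definition-Hamiltonian} transfer verbatim, and goodness of the rate function is unaffected.

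The main step is identifying their rate function, which arises via the contraction principle from Schilder's theorem for $\sqrt\epsilon B^H$. They write it as
\begin{equation*}
    \inf\Bigl\{\tfrac12\|h\|_{\mathcal H_H}^2:\ \phi=x_0+\textstyle\int_0^\cdot b_{\mathscr H}(\phi_s)\,ds+\int_0^\cdot\sigma_s\,dh_s\Bigr\}.
\end{equation*}
For each $\phi$ the skeleton equation determines $\int_0^\cdot\sigma\,dh$ uniquely as $\phi-x_0-\int_0^\cdot b_{\mathscr H}(\phi)$. Because $\sigma$ is invertible, $h$ is itself unique, so the infimum is over at most one point. Writing $h=K_Hu$ and using \eqref{khsigma} and \eqref{eq:Ksigma_inverse}, we get $u=(K_H^\sigma)^{-1}\bigl(\phi-x_0-\int_0^\cdot b_{\mathscr H}(\phi)\bigr)$ and $\|h\|_{\mathcal H_H}=\|u\|_{L^2}$. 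This is exactly \eqref{eq:Hamiltonian-rate-function}, and it equals $+\infty$ when the residual lies outside $\mathcal H_H^\sigma$. On the effective domain it therefore coincides with \eqref{eq:fractional-kinetic-term}.

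The expected obstacle is this identification: their rate function may be phrased through $K_H^*$ or a different representation of the Cameron--Martin norm. The equivalence should follow from the isometry $\langle K_Hu,K_Hv\rangle_{\mathcal H_H}=\langle u,v\rangle_{L^2}$ together with the fact, noted after \eqref{eq:Ksigma_inverse_explicit}, that $h\mapsto\int_0^\cdot\sigma\,dh$ is an isomorphism of $\mathcal H_H$.
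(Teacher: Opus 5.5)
Your proposal is correct and follows the paper's route. The paper likewise obtains the theorem by applying the large deviation principle of \cite{FanYuYuan2023} to the distribution-independent system and reparametrizing via $\epsilon=\delta^{2H}$. Your explicit identification of the rate function fills in a step the paper leaves implicit: uniqueness of the control $h$ in the skeleton equation follows from invertibility of $\sigma$, and then $u=(K_H^\sigma)^{-1}\bigl(\phi-x_0-\int_0^\cdot b_{\mathscr H}(\phi_s)\,ds\bigr)$ via \eqref{eq:Ksigma_inverse}.
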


\begin{remark}[Interpretation of the rate function]
\label{rem:Hamiltonian-LDP-interpretation}
The rate function has a natural control interpretation. A path $\phi$
has finite rate precisely when it can be written as
\begin{equation*}
    \phi_t
    =
    x_0
    +
    \int_0^t b_{\mathscr H}(\phi_s)\,ds
    +
    (K_H^\sigma u)(t)
\end{equation*}
for some $u\in L^2([0,T];\mathbb R^{2n})$. In that case,
\begin{equation*}
    I_H(\phi)
    =
    \frac12
    \|u\|_{L^2([0,T];\mathbb R^{2n})}^2.
\end{equation*}
Thus, $I_H(\phi)$ measures the Cameron--Martin control energy required
for the small-noise system to follow $\phi$.

Let $\bar X$ be the deterministic Hamiltonian trajectory starting
from $x_0$. Then
\begin{equation*}
    I_H(\phi)\geq0,
    \qquad
    I_H(\phi)=0
    \quad\Longleftrightarrow\quad
    \phi=\bar X.
\end{equation*}
Since $I_H$ is good, for every $\rho>0$,
\begin{equation*}
    c_\rho
    :=
    \inf_{\|\phi-\bar X\|_\infty\geq\rho}I_H(\phi)
    >
    0.
\end{equation*}
Consequently,
\begin{equation}
\label{eq:LDP-concentration-deterministic-path}
    \limsup_{\epsilon\to0}
    \epsilon
    \log
    \mathbb P
    \left(
        \|X^\epsilon-\bar X\|_\infty\geq\rho
    \right)
    \leq
    -c_\rho.
\end{equation}
Thus, over every fixed time interval, the stochastic trajectory
concentrates exponentially near $\bar X$ as $\epsilon\to0$.

If $x_0\in\mathcal T_{I_0}^{P}$, then
$\bar X_t\in\mathcal T_{I_0}^{P}$ for all $t$, and hence
\begin{equation*}
    \sup_{0\leq t\leq T}
    \operatorname{dist}
    \left(
        X_t^\epsilon,\mathcal T_{I_0}^{P}
    \right)
    \leq
    \|X^\epsilon-\bar X\|_\infty.
\end{equation*}
Therefore, \eqref{eq:LDP-concentration-deterministic-path} also gives
exponential concentration near the perturbed KAM torus.

For $H>1/2$, this complements
Theorem~\ref{thm:fractional-most-probable-KAM-tori}: the large
deviation principle describes the exponential concentration of
small-noise trajectories near the deterministic motion on the KAM
torus, while the Onsager--Machlup functional selects that
deterministic motion as a most probable evolution path.
\end{remark}

\section{Numerical experiments}
In this section, we present two examples illustrating the main results.
The first demonstrates the persistence and loss of most probable paths
as the noise intensity varies, together with their long-time breakdown
in the regime $H>1/2$. The second provides a geometric visualization
of a preserved KAM torus and the associated deterministic and
stochastic trajectories.

\subsection{Most probable paths and long-time instability of a periodic orbit} 
We illustrate the preceding results with a concrete example. Consider the drift
\begin{equation*}
    b(x,y)
    =
    \begin{pmatrix}
        \tfrac{1}{2}G(\rho)x-y\\[1mm]
        \tfrac{1}{2}G(\rho)y+x
    \end{pmatrix},
    \qquad
    \rho=x^2+y^2,
\end{equation*}
where
\begin{equation*}
    G(\rho)
    =
    -(\rho-1)+(\rho-1)^2-\frac{4}{3}(\rho-1)^3,
\end{equation*}
and let the diffusion matrix be
\begin{equation*}
    \sigma_s
    =
    \begin{pmatrix}
        1 & 0\\
        0 & 2+\cos s
    \end{pmatrix}.
\end{equation*}
The corresponding deterministic system admits the $2\pi$-periodic orbit
\begin{equation*}
    \bar X_s
    =
    \begin{pmatrix}
        \cos s\\
        \sin s
    \end{pmatrix}.
\end{equation*}

A direct calculation gives
\begin{equation*}
    \nabla\cdot b(x,y)
    =
    G(\rho)+\rho G'(\rho)
    =
    -1-(\rho-1)^2-\frac{16}{3}(\rho-1)^3.
\end{equation*}
Writing $\nabla\cdot b=F(\rho)$, we have
\begin{equation*}
    F'(1)=0,
    \qquad
    F''(1)=-2.
\end{equation*}
Since $\rho=x^2+y^2$,
\begin{equation*}
    \nabla^2(\nabla\cdot b)(x,y)
    =
    2F'(\rho)I_2
    +
    4F''(\rho)
    \begin{pmatrix}
        x^2 & xy\\
        xy & y^2
    \end{pmatrix}.
\end{equation*}
Consequently,
\begin{equation*}
    \nabla(\nabla\cdot b)(\bar X_s)=0,
\end{equation*}
and
\begin{equation}
\label{eq:hessian-divergence-along-orbit}
    \nabla^2(\nabla\cdot b)(\bar X_s)
    =
    -8\,\bar X_s\bar X_s^T.
\end{equation}

We next identify the zero modes of the linearized operator
\begin{equation*}
    \mathcal L
    =
    \frac{d}{ds}-\nabla b(\bar X_s).
\end{equation*}
Introduce the $2\pi$-periodic orthogonal matrix
\begin{equation*}
    P(s)
    =
    \begin{pmatrix}
        \cos s & -\sin s\\
        \sin s & \cos s
    \end{pmatrix}.
\end{equation*}
For $\eta_s=P(s)z_s$, one obtains
\begin{equation*}
    \mathcal L\eta_s
    =
    P(s)
    \left(
        \frac{d}{ds}-R
    \right)z_s,
    \qquad
    R
    =
    \begin{pmatrix}
        -1 & 0\\
        0 & 0
    \end{pmatrix}.
\end{equation*}
Thus, the right zero mode is
\begin{equation*}
    v_s
    =
    P(s)
    \begin{pmatrix}
        0\\
        1
    \end{pmatrix}
    =
    \begin{pmatrix}
        -\sin s\\
        \cos s
    \end{pmatrix}
    =
    \dot{\bar X}_s.
\end{equation*}
Since $P(s)$ is orthogonal and $R$ is symmetric, the corresponding left zero
mode coincides with $v_s$:
\begin{equation*}
    \omega_s=v_s.
\end{equation*}

The diffusion matrix $\sigma_s$ is uniformly nondegenerate, and
\begin{equation*}
    \int_0^{2\pi}\sigma_s^T\omega_s\,ds
    =
    \begin{pmatrix}
        0\\
        \pi
    \end{pmatrix}.
\end{equation*}
Taking
\begin{equation*}
    q
    =
    \begin{pmatrix}
        1\\
        0
    \end{pmatrix},
\end{equation*}
we obtain
\begin{equation*}
    q^T
    \int_0^{2\pi}\sigma_s^T\omega_s\,ds
    =
    0.
\end{equation*}
Hence, the Fredholm solvability condition is satisfied, and the periodic problem
\begin{equation*}
    \mathcal L\mu_s
    =
    \sigma_s q
    =
    \begin{pmatrix}
        1\\
        0
    \end{pmatrix}
\end{equation*}
admits a $2\pi$-periodic solution.

Indeed, writing $\mu_s=P(s)z_s$, the equation reduces to
\begin{equation*}
    z_s'-Rz_s
    =
    P(s)^T
    \begin{pmatrix}
        1\\
        0
    \end{pmatrix}
    =
    \begin{pmatrix}
        \cos s\\
        -\sin s
    \end{pmatrix}.
\end{equation*}
One periodic solution is
\begin{equation*}
    z_s
    =
    \begin{pmatrix}
        \dfrac{1}{2}(\cos s+\sin s)\\[1mm]
        \cos s
    \end{pmatrix}.
\end{equation*}
Since $\bar X_s=P(s)(1,0)^T$, its radial component is
\begin{equation*}
    \bar X_s^T\mu_s
    =
    \frac{1}{2}(\cos s+\sin s),
\end{equation*}
which does not vanish identically. Therefore, by
\eqref{eq:hessian-divergence-along-orbit},
\begin{equation*}
\begin{aligned}
    \int_0^{2\pi}
    \mu_s^T
    \nabla^2(\nabla\cdot b)(\bar X_s)
    \mu_s\,ds
    &=
    -8
    \int_0^{2\pi}
    \left|
        \bar X_s^T\mu_s
    \right|^2
    ds\\
    &=
    -4\pi
    <0.
\end{aligned}
\end{equation*}
Thus, the periodic response generated by the forcing $\sigma_s q$ has a
nontrivial radial component and produces a strictly negative contribution to
the second variation.

Finally, we numerically investigate the most probable transition paths
in the Brownian regime $H=1/2$ over $[0,2\pi]$. We consider the noise
intensities $\epsilon=0.01$ and $\epsilon=0.3$ and compare the
deterministic periodic orbit, sample paths satisfying the endpoint
constraint, and the numerically computed most probable transition
path.

As shown in Figure~\ref{fig:noise-comparison}, the most probable path
remains close to the deterministic periodic orbit for small noise,
whereas a pronounced deviation is observed for larger noise. This
behavior is consistent with the persistence of the deterministic path
in the small-noise regime and the possible loss of its local
minimality as the noise intensity increases.

For the long-time problem, directly plotting the periodically extended
trajectory becomes uninformative as the number of periods increases,
because the repeated revolutions make the resulting phase portraits
difficult to distinguish. We therefore illustrate the long-time loss
of positive definiteness by directly computing the second variation
along a family of admissible perturbations.

Extend the periodic response mode $\mu$ periodically to $[0,2\pi N]$
and define
\begin{equation}
\label{eq:numerical-perturbation}
    \eta_N(s)
    :=
    \chi\left(\frac{s}{N}\right)\mu_s,
    \qquad
    \chi(u):=1-\cos u,
    \qquad
    0\leq s\leq2\pi N.
\end{equation}
Since $\chi(0)=\chi(2\pi)=0$, we have
\begin{equation*}
    \eta_N(0)=\eta_N(2\pi N)=0,
\end{equation*}
so $\eta_N$ is an admissible fixed-endpoint perturbation. We then
numerically evaluate
\begin{equation*}
    \delta^2J_{\epsilon,N}(\bar X)[\eta_N,\eta_N]
    =
    \frac{1}{\epsilon}\mathcal I_H^N(\eta_N)
    -
    \frac{d_H}{2}\mathcal V^N(\eta_N).
\end{equation*}

Fixing the noise intensity at $\epsilon=0.1$, we perform the computation
for $H=0.7$, $H=0.5$, and $H=0.3$. The results in
Figure~\ref{fig:long-time-second-variation} illustrate the
Hurst-dependent behavior predicted by the theoretical analysis. For
$H=0.7$, the second variation eventually becomes negative, so
$\eta_N$ provides a negative direction and demonstrates the loss of
local minimality. By contrast, for $H=0.5$ and $H=0.3$, the second
variation remains positive over the computed range of periods.

\begin{figure}[htbp]
    \centering

    \begin{subfigure}[t]{0.62\textwidth}
        \centering
        \includegraphics[width=\textwidth]{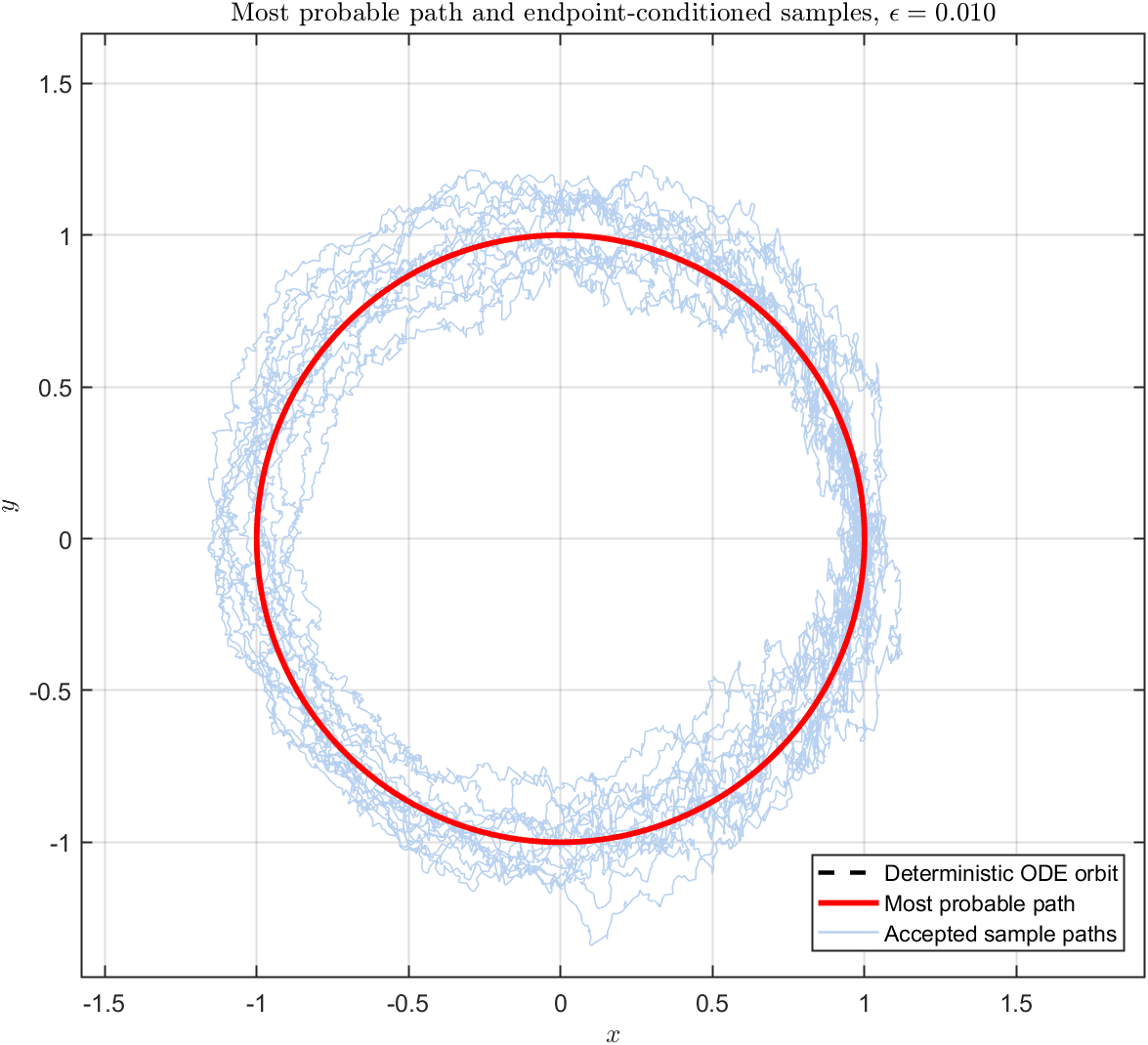}
        \caption{Small-noise regime, $\epsilon=0.01$.}
        \label{fig:small-noise}
    \end{subfigure}

    \vspace{2mm}

    \begin{subfigure}[t]{0.62\textwidth}
        \centering
        \includegraphics[width=\textwidth]{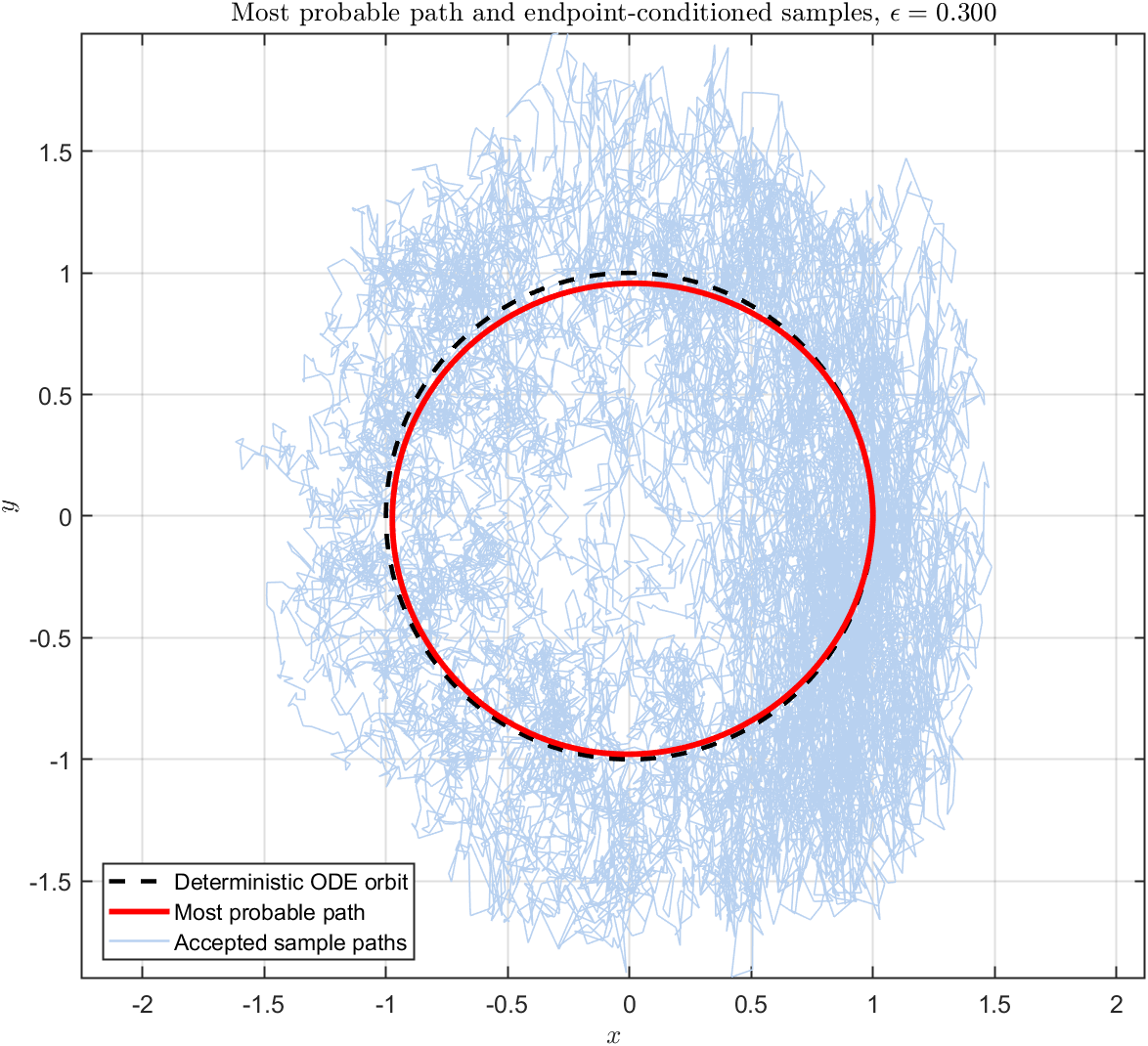}
        \caption{Larger-noise regime, $\epsilon=0.3$.}
        \label{fig:large-noise}
    \end{subfigure}

    \caption{Most probable transition paths in the Brownian regime
    $H=1/2$ over $[0,2\pi]$. For small noise, the numerically computed
    most probable path remains close to the deterministic periodic
    orbit. At the larger noise intensity, a pronounced deviation from
    the deterministic orbit is observed.}
    \label{fig:noise-comparison}
\end{figure}

\begin{figure}[p]
    \centering

    \begin{subfigure}[t]{0.82\textwidth}
        \centering
        \includegraphics[width=\textwidth]{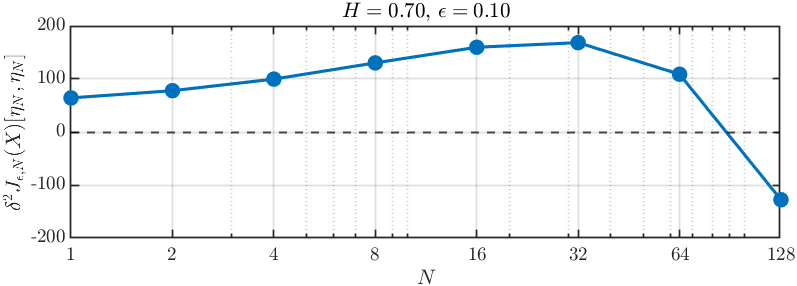}
        \caption{}
        \label{fig:second-variation-eps01}
    \end{subfigure}

    \vspace{2mm}

    \begin{subfigure}[t]{0.82\textwidth}
        \centering
        \includegraphics[width=\textwidth]{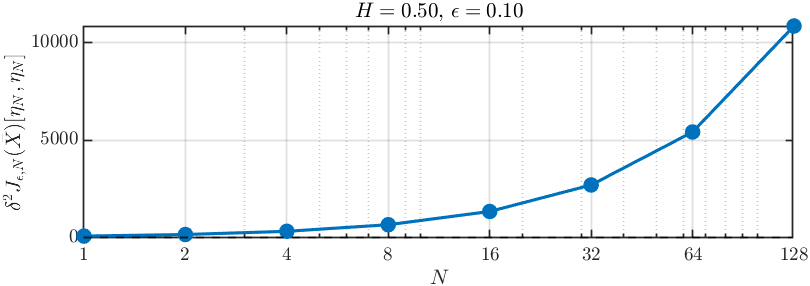}
        \caption{}
        \label{fig:second-variation-eps02}
    \end{subfigure}

    \vspace{2mm}

    \begin{subfigure}[t]{0.82\textwidth}
        \centering
        \includegraphics[width=\textwidth]{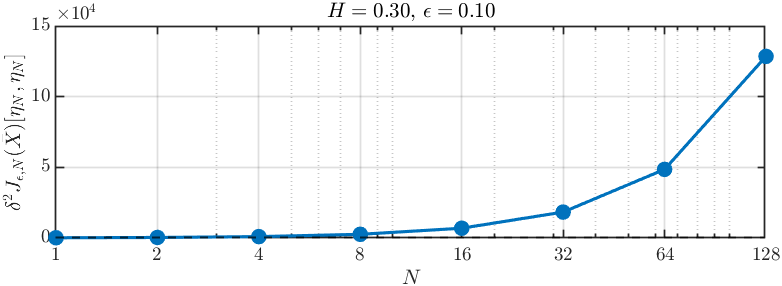}
        \caption{}
        \label{fig:second-variation-eps03}
    \end{subfigure}

    \caption{Second variation
    $\delta^2J_{\epsilon,N}(\bar X)[\eta_N,\eta_N]$ as a function of
    the number of periods $N$ for $H=0.7$. Negative values identify
    a destabilizing direction and therefore imply that the
    periodically extended deterministic orbit is not a local
    minimizer. The second variation need not vary monotonically with
    $N$ because of finite-scale interactions between the slowly
    varying envelope and the periodic response mode. Nevertheless, as
    $N$ increases, the linearly growing potential contribution
    eventually dominates the fractional kinetic contribution, leading
    to the loss of positive definiteness. Larger noise intensities
    generally lead to an earlier occurrence of this instability.}
    \label{fig:long-time-second-variation}
\end{figure}

\subsection{Most probable evolution paths on a KAM torus}

We illustrate the preceding result using an explicitly solvable
two-degree-of-freedom nearly integrable Hamiltonian system. Let
\begin{equation*}
    F(\theta)
    :=
    \sin\theta_1\sin\theta_2,
    \qquad
    J
    :=
    I-\delta\nabla F(\theta),
\end{equation*}
and consider
\begin{equation}
\label{eq:numerical-KAM-Hamiltonian}
    \mathscr H_\delta(\theta,I)
    :=
    \omega^T(J-I_0)
    +
    \frac{\kappa}{2}|J-I_0|^2,
    \qquad
    (\theta,I)\in\mathbb T^2\times\mathbb R^2.
\end{equation}
Equivalently,
\begin{equation*}
    \mathscr H_\delta(\theta,I)
    =
    H_0(I)+P_\delta(\theta,I),
\end{equation*}
where
\begin{equation*}
    H_0(I)
    :=
    \omega^T(I-I_0)
    +
    \frac{\kappa}{2}|I-I_0|^2
\end{equation*}
and
\begin{equation*}
\begin{aligned}
    P_\delta(\theta,I)
    &=
    -\delta
    \left[
        \omega+\kappa(I-I_0)
    \right]^T
    \nabla F(\theta)
    +
    \frac{\kappa\delta^2}{2}
    |\nabla F(\theta)|^2.
\end{aligned}
\end{equation*}
Thus, $P_\delta=O(\delta)$, while
\begin{equation*}
    \partial_I^2H_0=\kappa I_2
\end{equation*}
is nondegenerate whenever $\kappa\neq0$.

The transformation
\begin{equation*}
    J=I-\delta\nabla F(\theta)
\end{equation*}
is canonical. Consequently, the perturbed system possesses the
explicit invariant torus
\begin{equation}
\label{eq:explicit-deformed-KAM-torus}
    \mathcal T_\delta
    :=
    \left\{
        (\theta,I)\in\mathbb T^2\times\mathbb R^2:
        I=I_0+\delta\nabla F(\theta)
    \right\}.
\end{equation}
The dynamics on $\mathcal T_\delta$ is
\begin{equation*}
    \dot\theta=\omega,
    \qquad
    I=I_0+\delta\nabla F(\theta).
\end{equation*}
We take
\begin{equation*}
    \omega=
    \begin{pmatrix}
        1\\ \sqrt{2}
    \end{pmatrix},
    \qquad
    \delta=0.18,
    \qquad
    \kappa=0.30.
\end{equation*}
Since the components of $\omega$ are rationally independent, the
corresponding deterministic trajectory is quasi-periodic on
$\mathcal T_\delta$.

We further consider the fractional stochastic perturbation
\begin{equation}
\label{eq:numerical-stochastic-KAM-system}
\left\{
\begin{aligned}
    d\theta_t
    &=
    \left[
        \omega+\kappa(J_t-I_0)
    \right]dt
    +
    \sqrt{\epsilon}\,
    \sigma_\theta(t)\,dB_{\theta,t}^H,
    \\
    dI_t
    &=
    \delta\nabla^2F(\theta_t)
    \left[
        \omega+\kappa(J_t-I_0)
    \right]dt
    +
    \sqrt{\epsilon}\,
    \sigma_I(t)\,dB_{I,t}^H,
    \\
    J_t
    &=
    I_t-\delta\nabla F(\theta_t),
\end{aligned}
\right.
\end{equation}
where $B_\theta^H$ and $B_I^H$ are independent two-dimensional
fractional Brownian motions. In the simulation, we set
\begin{equation*}
    H=0.7,
    \qquad
    \epsilon=0.02,
\end{equation*}
and use the time-dependent diffusion matrices
\begin{equation*}
    \sigma_\theta(t)
    =
    \operatorname{diag}
    \left(
        0.10(1+0.20\sin t),
        0.08(1+0.20\cos t)
    \right)
\end{equation*}
and
\begin{equation*}
    \sigma_I(t)
    =
    \operatorname{diag}
    \left(
        0.045(1+0.20\cos t),
        0.045(1+0.20\sin t)
    \right).
\end{equation*}

Since the phase space is four-dimensional, the invariant torus and
the trajectories are displayed through a three-dimensional
projection. The red curve represents a trajectory of the unperturbed
integrable system, the blue curve represents the corresponding
quasi-periodic trajectory on the deformed invariant torus
$\mathcal T_\delta$, and the light-gray curves are sample trajectories
of \eqref{eq:numerical-stochastic-KAM-system}. Only the upper half of
the projected torus is displayed to avoid obscuring the trajectories.

As shown in Figure~\ref{fig:numerical-KAM-torus}, the deterministic
perturbation deforms the original invariant torus while preserving its
quasi-periodic dynamics. The stochastic sample paths generally leave
the deterministic torus, since the torus is not pathwise invariant
under the stochastic dynamics. Nevertheless, the deterministic
trajectory on $\mathcal T_\delta$ is selected as a most probable
evolution path by the Onsager--Machlup functional. The figure therefore
illustrates variational persistence rather than the existence of a
random invariant torus.

\begin{figure}[htbp]
    \centering
    \includegraphics[width=0.78\textwidth]{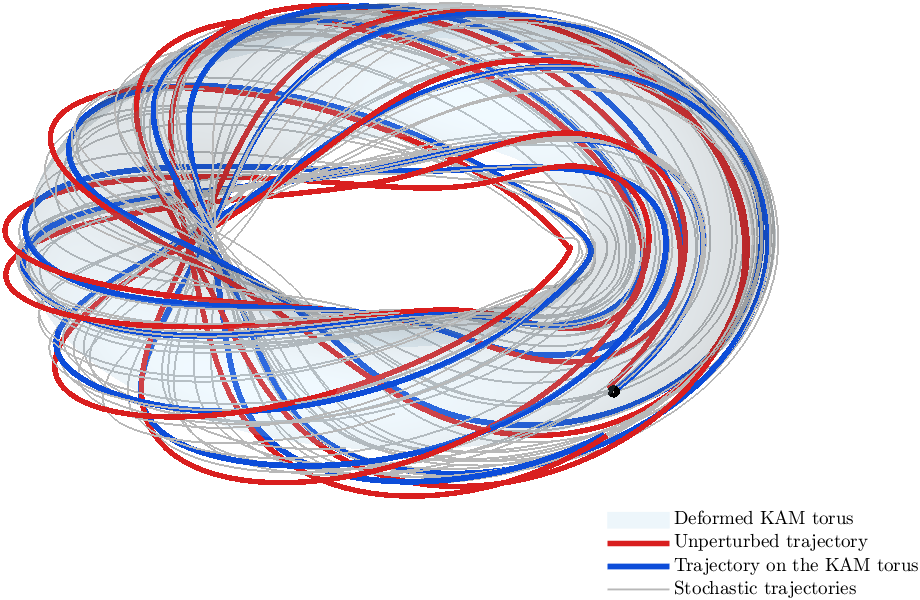}
    \caption{
        Three-dimensional projection of the invariant torus and the
        associated trajectories for $H=0.7$ and $\epsilon=0.02$.
        The red curve is an unperturbed quasi-periodic trajectory, the
        blue curve is the corresponding trajectory on the deformed
        KAM torus, and the light-gray curves are sample trajectories
        under fractional stochastic perturbations. The transparent
        surface represents the upper half of the projected deformed
        invariant torus.
    }
    \label{fig:numerical-KAM-torus}
\end{figure}

\section*{Funding}

This work was supported by the National Key R\&D Program of China
[grant number 2023YFA1009200], the National Key Project of the National
Natural Science Foundation of China [grant number 12531009], and the
National Natural Science Foundation of China
[grant numbers 12471183 and 12071175].

        \clearpage

\bibliographystyle{plain}
\bibliography{cite.bib}

\end{document}